\newtheorem{thm}{Theorem}[section]
\newtheorem{lem}[thm]{Lemma}
\newtheorem{defn}[thm]{Definition}
\newtheorem{rem}[thm]{Remark}
\numberwithin{equation}{section}
\newcommand{\beq}{\begin{equation}}
\newcommand{\eeq}{\end{equation}}
\newcommand{\rd}{{\rm d}}
\def\LL{\mathrm{L}} 
\newcommand{\Ubar}{\overline{U}}
\newcommand{\ubar}{\underline{U}}
\def\qed{\,\unskip\kern 6pt \penalty 500
\raise -2pt\hbox{\vrule \vbox to8pt{\hrule width 6pt \vfill\hrule}\vrule}\par}
\definecolor{darkblue}{rgb}{0.05, .05, .65}
\definecolor{darkgreen}{rgb}{0.05, .70, .05}
\definecolor{darkred}{rgb}{0.8,0,0}
\def\qed{\unskip\kern 6pt \penalty 500
\raise -2pt\hbox{\vrule \vbox to8pt{\hrule width 6pt \vfill\hrule}\vrule}\par} 
\begin{document}


\title{\textbf{ \Large  The porous medium equation on Riemannian manifolds with negative curvature. \\ The large-time behaviour}\\[10mm]}

\author{Gabriele Grillo\footnote{Dipartimento di Matematica, Politecnico di Milano, Piazza Leonardo da Vinci 32, 20133 Milano, Italy, \it email address\rm: \tt gabriele.grillo@polimi.it}, Matteo Muratori\footnote{Dipartimento di Matematica ``F. Casorati'', Universit\`a degli Studi di Pavia, via A. Ferrata 5, 27100 Pavia, Italy, \it email address\rm: \tt matteo.muratori@unipv.it}, and Juan Luis V\'azquez\footnote{Departamento de Matem\'aticas, Universidad Aut\'onoma de Madrid, 28049 Madrid, Spain, \it email address\rm: \tt juanluis.vazquez@uam.es}
}

\date{} 

\maketitle

\begin{abstract}

\noindent We consider nonnegative solutions of the porous medium equation (PME) on a Cartan-Hadamard manifold whose negative curvature can be unbounded. We take compactly supported initial data because we are also interested in free boundaries. We classify the geometrical cases we study into quasi-hyperbolic, quasi-Euclidean and critical cases, depending on the growth rate of the curvature at infinity.  We prove sharp upper and lower bounds on the long-time behaviour of the solutions in terms of  corresponding  bounds on the curvature. In particular we obtain a sharp form of the smoothing effect on such manifolds. We also estimate the location of the free boundary. A global Harnack principle follows.

We also present a change of variables that allows to transform radially symmetric solutions of the PME on model manifolds into radially symmetric solutions of a corresponding weighted PME on Euclidean space and back. This equivalence turns out to be an important tool of the theory.

\end{abstract}

\newpage

\section{Introduction and outline of results}

This paper is concerned with the porous medium equation (PME for short)
\begin{equation}\label{pme}
\begin{cases}
u_t=\Delta u^m  & \textrm{in } M \times \mathbb{R}^+ \, , \\
u(\cdot,0)=u_0  & \textrm{in } M  \, ,
\end{cases}
\end{equation}
where $m>1$, $\Delta$ denotes the Laplace-Beltrami operator on a Riemannian manifold $(M,g)$ without border and the initial datum $u_0$ is assumed to be
nonnegative, bounded and compactly supported. The main assumption on $M$ is that it is a
\it Cartan-Hadamard manifold\rm, namely that it is complete, simply connected and has everywhere \it nonpositive \rm sectional curvature.

The study of the PME on such kind of manifolds is quite recent, and the first results in this connection concern the special case in which $M={\mathbb H}^n$, the $n$-dimensional hyperbolic space, a manifold having special significance since its sectional curvature is $-1$ everywhere. In fact, two of the present authors considered recently in \cite{GM2} the case of the \it fast diffusion equation \rm  (namely, \eqref{pme} with $m<1$) in ${\mathbb H}^n$, proving precise asymptotics for positive solutions. In \cite{V}, the last of the present authors constructed and studied the fundamental tool for studying the asymptotic behaviour of general solutions of \eqref{pme} on ${\mathbb H}^n$, namely the \it Barenblatt solutions\rm, which are solutions of \eqref{pme} corresponding to a Dirac delta as initial datum. Two of the most important results of \cite{V} can be summarized as follows:

$\bullet$ The decay estimate
\begin{equation}\label{smoothing H^n}
\|u(t)\|_\infty\le C\left({\log t}/t\right)^{\frac1{m-1}}
\end{equation}
holds for all $t$ sufficiently large. This bound is quite different from the corresponding Euclidean one, where the sharp upper bound takes the form $u(x,t)\le Ct^{-\alpha(n)}$, with exponent $\alpha(n)=(m-1+(2/n))^{-1}$ which is strictly less than
$1/(m-1)$ for all $n$; the difference is smaller as $n\to\infty$. Estimate  \eqref{smoothing H^n} bears closer similarity to the estimate that one obtains when
the problem is posed in a \it bounded  Euclidean  domain \rm and zero Dirichlet boundary data are assumed, since then the sharp estimate is $u(x,t)\sim
C(x)t^{-1/(m-1)}$, differing from the \eqref{smoothing H^n} only in the logarithmic time correction.

$\bullet$ Solutions corresponding to a compactly supported initial datum are compactly supported for all times, and in particular the free boundary $\mathsf{R}(t)$ of the Barenblatt solution behaves for very large times like $\mathsf{R}(t)\sim \gamma\log t+b$ for some precise constants $\gamma, b$. In fact, the free boundary of \it general \rm solutions tends to be a sphere of radius $\mathsf{R}(t)$ with $\mathsf{R}(t)$ as above.

\medskip

Later on, it has been shown in \cite{GM3} that \eqref{smoothing H^n} holds on
a large class of Cartan-Hadamard manifolds, namely those which support a \it Poincar\'e inequality\rm, namely such that the spectrum of $\Delta$ is bounded away from zero, see e.g. \cite{H}. This includes of course ${\mathbb H}^n$, as well as every Cartan-Hadamard manifold whose sectional curvature is bounded away from zero. See \cite{BG, BGV} for some previous results.

No result is however available as concerns analogues of \eqref{smoothing H^n} when a Poincar\'e
inequality does not hold, which may be the case when the sectional curvature is negative but tends
to zero at infinity. Nor it is known if a stronger smoothing effect holds if the curvature is allowed to be unbounded below. Besides, no analogue of the free boundary behaviour, nor any related \it lower \rm bound on solutions, is known but on ${\mathbb H}^n$.

\medskip
\noindent{\bf Notations for asymptotics.} We use the following notations for asymptotic approximations as $r\to\infty$: by $ f(r) \approx g(r) $ we mean that there exist two positive constants $ c_1\ge c_0>0 $ such that $ c_0 \le f(r)/g(r) \le c_1 $, whereas by $ f(r) \sim g(r) $ we mean the more precise behaviour: $ \lim_{r \to + \infty} f(r)/g(r)=1 $.

\subsection{Description of the asymptotic results}\label{desc-res}  In this paper we aim at dealing with certain Riemannian manifolds which, to some extent, generalize hyperbolic space. More precisely, we shall consider Cartan-Hadamard manifolds whose negative curvatures have bounds from above and below that are powers of the geodetic distance.

\medskip

\noindent {\bf Quasi-hyperbolic range.} To start with, we assume that both bounds behave like $ - r^{2 \mu} $ as $ r \to \infty $, with $ \mu \in (-1,1) $ and $r=\operatorname{dist}(x,o)$, $o$ being a given pole and $\operatorname{dist}$ being Riemannian distance. We call this range  for convenience the {\sl  quasi-hyperbolic range}, since the results bear a resemblance with the study of the equation on the hyperbolic space $\mathbb{H}^n$ done in \cite{V}. In that case our results imply that positive solutions of equation \eqref{pme} starting from bounded and compactly supported initial data satisfy for all large times
\begin{equation}\label{eq: intro-infty}
\| u(t) \|_\infty^{m-1} \approx t^{-1} \, (\log t)^{\frac{1-\mu}{1+\mu}}.
\end{equation}
Moreover, if we denote by $ \mathsf{R}(t) $  the radius of the smallest ball that contains the support of the solution at time $t$ (or, equivalently, the biggest ball contained in the support), then
\begin{equation}\label{eq: intro-supp}
\mathsf{R}(t) \approx (\log t)^{\frac{1}{1+\mu}}
\end{equation}
as $ t \to +\infty $. Note that \eqref{eq: intro-infty} covers all powers of $ \log t $ between $ 0 $ and $ +\infty $ as $ \mu $ ranges in $ (-1,1) $, while \eqref{eq: intro-supp} covers all the powers between $1/2$ and $ +\infty $. These results follow from the precise space-time bound: \begin{equation*}\label{bound-alpha-intro}
\begin{aligned}
&\frac{C_0}{(t+t_0)^{\frac1{m-1}}} \! \left(\gamma_0\left[\log(t+t_0)\right]^{\frac{1-\mu}{1+\mu}}-r^{1-\mu}\right)^{\frac1{m-1}}_+ \\
 &\le u(x,t) \le \frac{C_1}{(t+t_0)^{\frac1{m-1}}} \! \left( \gamma_1 \left[\log(t+t_0)\right]^{\frac{1-\mu}{1+\mu}}-r^{1-\mu}\right)^{\frac1{m-1}}_+
\end{aligned}
\end{equation*}
$ \forall x \in M \setminus B_{R_0}(o) \, ,  \forall t \ge 0 \, $ and $R_0$ large enough. This is a \it global Harnack principle\rm, using a terminology introduced, in the case of Euclidean fast diffusion, in \cite{DKV}, \cite{BV}.

In the special case $ \mu = 0 $ we are dealing with a variation of the hyperbolic space.
In the precise case of $\mathbb{H}^n$ we recover the estimates obtained  in \cite{V}, although a more accurate analysis was carried out there in order to get a sharp estimate over \eqref{eq: intro-supp}, namely
\begin{equation}\label{eq: hyperb-intro}
\mathsf{R}(t) \sim \frac{\log t}{(m-1)(n-1)}
\end{equation}
(where the curvature is taken  as $-1$). On the other hand, in $n$-dimensional Euclidean space it is well known (see e.g. \cite{Vaz07}) that
\begin{equation}\label{eq: euclid-intro}
\| u(t) \|_\infty^{m-1} \approx t^{-\frac{n(m-1)}{n(m-1)+2}} \, , \quad \mathsf{R}(t) \approx t^{\frac{1}{n(m-1)+2}} \, ,
\end{equation}
so that, informally, estimates \eqref{eq: intro-infty} and \eqref{eq: intro-supp} can be seen as limits of \eqref{eq: euclid-intro} as $n\to+\infty$ plus some logarithmic corrections.
If we let $ \mu \to 1 $, then estimate \eqref{eq: intro-supp} reads
\begin{equation} \label{eq: intro-supp-lim}
\mathsf{R}(t) \approx (\log t)^{\frac{1}{2}} \, ,
\end{equation}
while in \eqref{eq: intro-infty} the logarithmic correction seems to disappear. In fact, by carefully tracking multiplicative constants in our barrier methods when $\mu\to1$, we are able to handle the critical case $ \mu=1 $ in the limit. We shall prove that \eqref{eq: intro-supp-lim} does hold, whereas \eqref{eq: intro-infty} yields a $ \log\log $ correction, namely
\begin{equation}\label{eq: intro-infty-lim}
\| u(t) \|_\infty^{m-1} \approx t^{-1} \, (\log \log t) \, .
\end{equation}

\noindent Again, \eqref{eq: intro-supp-lim} and \eqref{eq: intro-infty-lim} follow by a precise global Harnack principle, see formula \eqref{bound-23} below.

So far, we have not been too precise about our requirements on the behaviour of curvatures as $ r \to +\infty $. In Section \ref{sec.smr} below we shall make those assumptions clear and we shall give precise statements of our separate estimates from above and below. In fact, we shall see that a bound from \emph{below} of the type of $ -r^{2\mu} $ on the \emph{radial Ricci curvature} of $M$ is enough in order to provide \emph{pointwise bounds} from below for positive solutions to \eqref{pme} with compactly supported initial data, whereas an analogous bound from \emph{above} on \emph{radial sectional curvatures} allows us to get pointwise bounds from above. The proofs are contained in Sections \ref{sec.proofs}-\ref{sect: proofs-2}.

\medskip

\noindent {\bf Weighted PME.} Another major contribution of the paper is the application of these results on the PME posed on  model manifolds (a particular case of the Cartan-Hadamard manifolds) into the study of the weighted version of the same equation posed in Euclidean space. This is done in Section \ref{sec: chvar} and uses an interesting  radial change of variables introduced in \cite{V}. It leads to an equivalent formulation of the radial problem in a different context. In the end, the particular weight depends on the starting geometry, see the summarizing table in Section \ref{sect: chvar-exa}. In the standard examples the weight is a \emph{logarithmic} correction of the critical density $\rho(x)=|x|^{-2}$, a case where the mathematical analysis is particularly difficult due to the complete homogeneity under scaling, see e.g. \cite{NR, IS}.

In this paper we use the equivalence, in some cases to transfer results from the geometric setting to the Euclidean weighted setting (we refer to Theorem \ref{thm: peso} below), in other cases to transfer them in the opposite direction, as we will see in the next paragraph.

\medskip

\noindent {\bf Quasi-Euclidean range.} The situation is completely different in the range $\mu\le -1$ that corresponds to very small curvatures at infinity and can be seen as {\sl quasi-Euclidean} range. In fact, if $\mu<-1$ we show that the Euclidean-type bounds \eqref{eq: euclid-intro} still hold.  This uses known results for the PME in Euclidean space and the equivalence of formulations explained above. Manifolds $M$ whose curvature decays faster than $\operatorname{dist}(x,o)^{2\mu}$ with $\mu<-1$ and $o$ fixed have been widely investigated in the literature since it is known that such condition implies that $M$ has finite topological type, see \cite{A} (a fact which need not be true when $\mu=-1$, see \cite{LS})  and in some precise sense such manifolds are ``close'' to ${\mathbb R}^n$, see e.g. \cite{GPZ}.

\medskip

There are two critical exponents in this classification, $\mu=1$ and $\mu=-1$.
The lower critical exponent $ \mu = -1 $ is very interesting: assuming that curvatures behave like $ -Q_1 r^{-2} $ as $r\to\infty$,  we will show that the decay estimates have exponents that depend on the coefficient $Q_1$. In particular, the sup norm estimates fill the gap between $t^{-1/(m-1)}$ and the Euclidean exponent $t^{-n/(n(m-1)+2)}$. This is stated in Section \ref{sec.smr.qEr} and proved  in full detail in Section \ref{sec.proofs-euclid}.


\medskip
%

The figure below describes the variation with $\mu$  of the exponents $\alpha,\beta$ in the bound
$$\|u(t)\|_\infty^{m-1}\approx t^{-\alpha}(\log t)^{\beta} \, . $$
It should be remarked that when $\mu>1$ it will be shown elsewhere that a bound of the form $\|u(t)\|_\infty\approx t^{-1/(1-m)}$, which bears close analogy with the case of the PME posed on bounded domains with Dirichlet boundary conditions, holds, thus justifying the corresponding part of the figure.
\begin{center}
\begin{figure}[!ht]
\centering
\includegraphics[width=0.98\columnwidth, height=0.41\textheight]{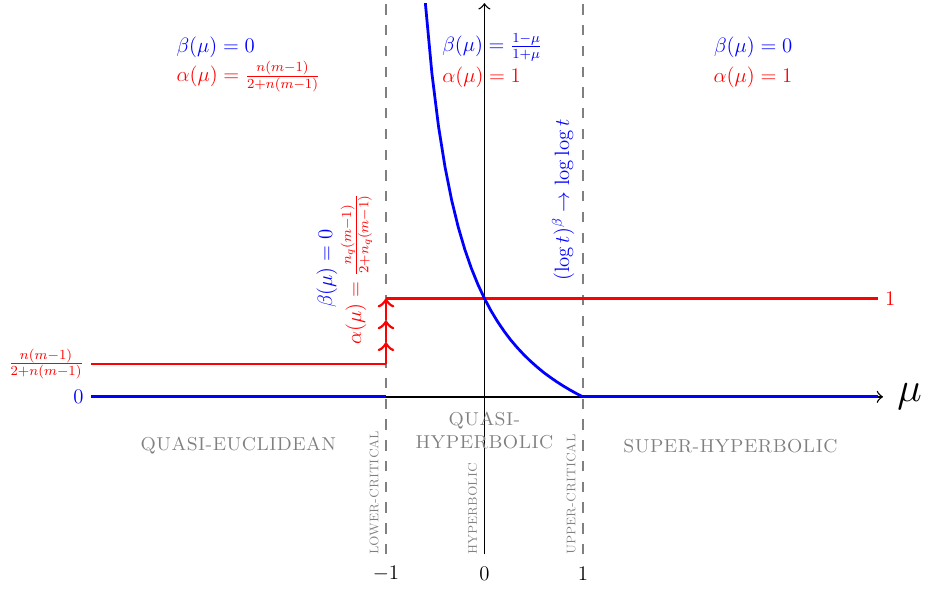}
\caption{The exponents $\alpha,\beta$ in the bound $\|u(t)\|_\infty^{m-1} \approx \, t^{-\alpha}(\log t)^{\beta}$.}
\end{figure}
\end{center}

\medskip

\noindent {\bf Organization of the paper.} After a section on functional and geometric preliminaries, Section \ref{geom}, we present the detailed statement of the results in the quasi-hyperbolic case in Section \ref{sec.smr}, followed by the proofs in the subcritical range in Section \ref{sec.proofs}, while the critical case $\mu=1$ is covered in Section
\ref{sect: proofs-2}. We then state the results of the Euclidean range in Section \ref{sec.smr.qEr}, but the proof is delayed becuase it relies on the study of the transformation of equations for radial solutions that leads to the equivalent problem for a weighted version of the Euclidean PME, which is done in Section \ref{sec: chvar}. The proof of the main results in the quasi-Euclidean range is done in Section \ref{sec.proofs-euclid}. Section \ref{sect: chvar-asymp} is devoted to the corresponding asymptotic results for the weighted, Euclidean PME.

In the last section we gather a number of comments and extensions. Two of them are worth mentioning here:

(1) We do not cover the supercritical case $ \mu>1 $ which is related to manifolds with very negative curvature. It has not been included because it represents a yet different situation that needs new techniques.

(2) We make a first attempt at comparison with the linear case, $m=1$, where there is of course some available information. To our surprise, this information seems not to be explicit at least at the level of detail of our nonlinear results above.

\normalcolor

\medskip


\section{Preliminaries}\label{geom}

\subsection{Existence and uniqueness of solutions} A satisfactory theory of existence and uniqueness for \eqref{pme} when initial data are positive, finite Radon measure, has been given very recently in \cite{GMP},  when $n\ge3$, on the class of manifolds we study here, namely Cartan-Hadamard manifolds such that \normalcolor
\[
\textrm{Ric}_{o}(x)\geq -C(1+\operatorname{dist}(x,o)^2)\ \textrm{for some }C\ge0,
\]
where $o$ is a given pole, $\operatorname{dist}$ denotes Riemannian distance, and $\textrm{Ric}_{o}$ is Ricci curvature in the radial direction. The case $n=2$ has some peculiarities, due to the methods of proof that we have used.  Notice that the above condition is known to be sharp for stochastic completeness, and hence for conservation of mass, in the linear setting, see \cite{Grig}. However,  for any $n\ge2$ \normalcolor existence and uniqueness of strong solutions e.g. for smooth compactly supported data hold by standard arguments, see \cite{Vaz07}, where we use the following concept of solution:

\begin{defn} We say that $u$ is a  weak solution to \eqref{pme}, given $u_0\in L^1(M)$, if the following holds:
\begin{equation*}\label{e65}
u\in C([0,+\infty); L^1(M)) \cap
L^\infty(M\times(\tau,\infty)) \quad \textrm{for all}\;\,
\tau>0\,,
\end{equation*}
\begin{equation*}\label{e66}
\nabla\left(u^m\right) \in L^2_{\rm loc}((0,\infty);L^2(M))\,,
\end{equation*}
\begin{equation*}\label{e67}
-\int_0^\infty\int_M u(x,t)\varphi_t(x,t) \, d\mathcal V(x) dt +
\int_0^\infty\int_M \langle \nabla (u^m)(x,t), \nabla
\varphi(x,t)\rangle \, d\mathcal V(x) dt \,=\,0
\end{equation*}
for any $\varphi\in C^\infty_c(M\times (0,\infty))$, with $\lim_{t\to0}u(t)=u_0$ in $L^1(M)$. We also say that $u$ is strong if in addition $u_t\in L^\infty_{\textrm{loc}}((0,+\infty);L^1(M))$.
\end{defn}
Notice that standard comparison theorems hold for sub- and supersolutions, provided they are strong, as well. This will be crucial below, and in fact the sub- and supersolutions we shall construct will satisfy such property.



\subsection{Geometric preliminaries}\label{sect: geom-pre}

We remind the reader that {\it Cartan-Hadamard} manifolds are defined to be complete, simply connected Riemannian manifolds with nonpositive sectional curvatures. It is known that they are diffeomorphic to ${\mathbb R}^n$ and that the \it cut locus \rm of any point $o$ is empty (see, e.g. \cite{Grig}, \cite{Grig3}).

One then defines {\it polar coordinates} around any given pole $o$. Set indeed, given any $x\in M\setminus \{o\}$, $r(x) := d(x, o)$ and define $\theta\in \mathbb S^{n-1}$ so that the geodesic from $o$ to $x$ starts
at $o$ with the direction $\theta$ in the tangent space $T_oM$. By construction, $\theta$ can be regarded as a point of $\mathbb S^{n-1}:=\{x\in \mathbb R^{n}:\,|x|=1\}.$

The Riemannian metric in $M\setminus \{o\}$
in such coordinate system will then read
\[ds^2 = dr^2+A_{ij}(r, \theta)d\theta^i d\theta^j,\]
where $(\theta^1, \ldots, \theta^{n-1})$ are coordinates in $\mathbb S^{n-1}$ and $(A_{ij})$ is a positive definite matrix. As a consequence,
the Laplace-Beltrami operator in the given coordinates has the form
\begin{equation}\label{e1}
\Delta = \frac{\partial^2}{\partial r^2} + m(r, \theta)\frac{\partial}{\partial
r}+\Delta_{S_{r}},
\end{equation}
where $m(r, \theta):=\frac{\partial}{\partial r}\big(\log\sqrt{A}\big)$, $ A:=\det (A_{ij})$, $ \Delta_{S_r}$ is the Laplace-Beltrami operator on $S_{r}:=\partial B(o, r)$, $ B(o,r) $ denoting the Riemannian ball of radius $r$ centered at $o$.
We say that $M$ is a {\it model manifold}, if it is endowed with a Riemannian metric of the form
\begin{equation*}\label{e2}
ds^2 = dr^2+\psi^2(r)d\theta^2,
\end{equation*}
where $r$ then denotes Riemannian distance from a given pole $o\in M$, $d\theta^2$ is the standard metric on $\mathbb S^{n-1}$, and $\psi$ belongs to the set of functions $\mathcal A$
defined as
\begin{equation}\label{eq: class-mod}
\mathcal A:=\left\{ \psi \in C^\infty((0,+\infty))\cap C^1([0,+\infty)): \ \psi'(0)=1 \, , \ \psi(0)=0 \, , \ \psi>0 \ \textrm{in} \ (0,+\infty)\right\} .
\end{equation}
 That is, one requires spherical symmetry of the given metric. To be precise, we may write $M\equiv M_\psi$; furthermore, we have $\sqrt{A(r,\theta)}=\psi^{n-1}(r) \, \eta(\theta) $ (for a suitable function $\eta$), so that
\begin{equation}\label{formula-laplacian-model}
\Delta = \frac{\partial^2}{\partial r^2}+ (n-1)\frac{\psi'}{\psi}\frac{\partial}{\partial r}+ \frac1{\psi^2}\Delta_{\mathbb S^{n-1}}\,,\end{equation}
where $\Delta_{\mathbb S^{n-1}}$ is the Laplace-Beltrami operator in $\mathbb S^{n-1}\,.$

It is well known that for $\psi(r)=r$ one recovers the Euclidean space, namely $M=\mathbb R^n$, while for $\psi(r)=\sinh r$ one gets the $n-$dimensional hyperbolic
space, namely $M=\mathbb H^n$. The choice $\psi(r)=\sin r$ yields the unit sphere, although we shall not be interested in positive curvature here.

For any $x\in M\setminus\{o\}$, denote by $\textrm{Ric}_o(x)$ the Ricci curvature at $x$ in the radial direction $\frac{\partial}{\partial r}$. Moreover,
denote by $\textrm{K}_{\omega}(x)$ the {\it sectional curvature} at the point $x\in M$ of the $2$-section determined by a tangent plane $\omega$ including
$\frac{\partial}{\partial r}$. Note that if $M_\psi$ is a model manifold, then for any $x=(r, \theta)\in M_\psi\setminus\{o\}$ we have
\[\textrm{K}_{\omega}(x)=-\frac{\psi''(r)}{\psi(r)}. \] Sectional curvature $\textrm{H}_\omega(x)$ w.r.t. planes orthogonal to such direction is given by
\[
\textrm{H}_\omega(x)=(1-\psi'^2)/\psi^2.
\]
\normalcolor Finally,
\[\textrm{Ric}_{o}(x)=-(n-1)\frac{\psi''(r)}{\psi(r)}\,,\]
where by $\textrm{Ric}_o(x)$ we denote the Ricci curvature at $x$ in the radial direction $\frac{\partial}{\partial r}$.
\smallskip

We shall make an essential use of comparison results for sectional and Ricci curvatures. By classical results (see e.g. \cite{GW},
\cite[Section 15]{Grig}), if
\begin{equation*}\label{e3a}
\textrm{K}_{\omega}(x)\leq -\frac{\widetilde \psi''(r)}{\widetilde \psi(r)}\quad \textrm{for all}\;\; x=(r,\theta)\in M\setminus\{o\},
\end{equation*}
for a given $\widetilde \psi\in \mathcal A$, then
\begin{equation*}\label{e3}
m(r, \theta)\geq (n-1)\frac{\widetilde \psi'(r)}{\widetilde \psi(r)}\quad \textrm{for all}\;\; r>0, \, \theta \in \mathbb S^{n-1}\,.
\end{equation*}

On the other hand, if
\[\textrm{Ric}_{o}(x)\geq
-(n-1)\frac{\psi''(r)}{\psi(r)}\quad \textrm{for all}\;\; x=(r,\theta)\in M\setminus\{o\},\] for some function $\psi\in \mathcal A$, then
\begin{equation*}\label{e4}
m(r, \theta)\leq (n-1)\frac{\psi'(r)}{ \psi(r)}\quad \textrm{for all}\;\; r>0, \, \theta \in \mathbb S^{n-1}\,.
\end{equation*}

Of course the two above bounds and the expression \eqref{e1} for the Riemannian Laplacian yield, for radial (i.e. depending only on geodesic distance from $o$) and monotonic functions, comparison results for the Laplacian in terms of the Laplacian of a model manifold defined by the function $\psi$ in terms of which the curvature bounds are supposed to hold. This will be of crucial importance below.

\begin{rem}\label{oss: varieta-con-polo} \rm
Our results still hold under milder assumptions on $M$. In fact, it suffices to suppose that $M$ is a manifold with a pole $o$ in the sense (see e.g.~\cite{GW}) that the exponential map at $o$ is a diffeomorphism between $T_oM$ and $M$, and that the appropriate bounds on \it radial \rm curvatures required throughout the present paper are satisfied.
\end{rem}

\subsection{Relative behaviour of $\boldsymbol \psi$ and the curvature}\label{sect: exa-mod}

Note first that  the quantity that appears in the Laplacian of a model manifold is $ g=\psi'/\psi=(\log(\psi))'$. Then the (sectional, radial) curvatures are given by
\begin{equation}\label{eq: riccati-g}
-K(r)=\frac{\psi''(r)}{\psi(r)}= g(r)^2+ g'(r)
\end{equation}
(up to constants).  Since we are interested in nonnegative curvatures it follows that $\psi$ must be convex, hence increasing since $\psi'(0)=1$.  For the reader's benefit we shall use a cut-and-paste approach to show the relation between functions $\psi$, $g$ and $K$ that will be useful later on. We will basically consider only large enough values of $r$.

\medskip

\noindent {\bf Type I.} We first consider functions $\psi$ with exponential growth, more precisely, we take $\psi(r)= Ae^{a_1 \, r^{\alpha}}+B$ with $a_1, \alpha, A > 0 $, $B\in{\mathbb R}$, for $r\ge\overline{r}$ and $\overline{r}$ large enough so that in particular $\psi>0$ for $r\ge\overline{r}$. Then we have, again for $r\ge\overline{r}$:
$$
g(r)\sim a_1 \, \alpha \, r^{\alpha -1} \, .
$$
For $\alpha>0$ we get  $K(r)\sim g^2(r)$, hence
$$
K(r)\sim -a_1^2 \, \alpha^2 \, r^{2\alpha-2} \, ,
$$
i.e.~we get $-K(r)\sim r^{2\mu}$ with $\mu=\alpha-1>-1$. Note that $\alpha=1$ gives us the hyperbolic space. An explicit choice of $\psi$ on $[0,+\infty)$ which gives rise to a $C^1$ Cartan-Hadamard manifold matching such asymptotic behaviour is the following:
\[
\psi(r)=\begin{cases}
r &\textrm{if}\ r\in[0,\overline{r}]\\
A\left(e^{a_1r^\alpha}-e^{a_1\overline{r}^\alpha}\right)+\overline{r}&\textrm{if}\ r>\overline{r},
\end{cases}
\]
where $\overline{r}$ is the unique solution of
\[
A\alpha a_1\overline{r}^{\alpha-1}e^{a_1\overline{r}^\alpha}=1
\]
when $\alpha>1$, whereas when $\alpha\in(0,1)$ one has to require that
 \[
 A\le\frac{1}{\alpha a_1}\left(\frac{1-\alpha}{e\alpha a_1}\right)^{\frac{1-\alpha}\alpha}
 \]
 and then $\overline{r}$ is the unique solution to the above equation satisfying the condition $\overline{r}>\left[(1-\alpha)/(\alpha a_1)\right]^{1/\alpha}$. If $\alpha=1$ one requires $A< 1/a_1$ and then takes $\overline{r}=-[\log (a_1A)]/a_1$.

\medskip

\noindent {\bf Type II.} In order to get the critical case $\mu=-1$ we have to change the form of $\psi$ into a lesser growth. We thus consider  powers, i.e. for large $r$ we set $\psi(r)=a_1 \, r^{\alpha}+B$ with $ a_1, \alpha>0 $, $B\in{\mathbb R}$. Then,
$$
g(r)\sim \alpha/r \, ,
$$
so that
$$
K(r)\sim-\alpha(\alpha-1) \, r^{-2} \,.
$$
We get zero curvature for $\alpha=1$ (Euclidean space), while negative curvature implies that $\alpha>1$ (or  $\alpha<0$ that is excluded since $\psi$ is increasing). Note that $\alpha=0$ gives the 1-dimensional Laplacian (in fact the manifold in this case is a sort of cylinder), which is also, formally, a case of zero (radial) curvature.

An explicit choice of $\psi$ on $[0,+\infty)$ which gives rise to a $C^1$ Cartan-Hadamard manifold matching such asymptotic behaviour, in the relevant case $\alpha>1$, is the following:
\[
\psi(r)=\begin{cases}
r &\textrm{if}\ r\in[0,\overline{r}]\\
a_1\,r^\alpha+\overline{r}\,\frac{\alpha-1}\alpha&\textrm{if}\ r>\overline{r},
\end{cases}
\]
where $\overline{r}:=(a_1\alpha)^{-\frac1{\alpha-1}}.$

\medskip

\noindent {\bf Type III.}  A limit case of the previous calculation is obtained by perturbing the Euclidean case $\psi(r)=c r$ and considering, for large $r$, $\psi(r)=c r\log(r)^{\alpha}+B$ for some $ c>0 $, $ \alpha \neq 0 $, $B\in\mathbb{R}$. We obtain
$$
g(r) \sim\frac1{r}+\frac{\alpha}{r\log(r)}
$$
and
$$
-K(r)\sim\frac{\alpha}{r^2\log(r)}+\frac{\alpha(\alpha-1)}{r^2(\log r)^2} \,,
$$
so that $-K(r)$  is positive for large $r$ only if $\alpha>0$. In that case
$$ -K(r)\sim \alpha r^{-2}(\log r)^{-1} \, . $$
An explicit choice of $\psi$ on $[0,+\infty)$ which gives rise to a $C^1$ Cartan-Hadamard manifold matching such asymptotic behaviour can be done by matching the outer behaviour with a standard behaviour near $o$, as done before. We omit the easy details.

\medskip

\noindent {\bf Type IV.}  Finally, we want to find Cartan-Hadamard models such that $ K(r) \approx r^{2\mu} $ as $ r \to +\infty $ with $ \mu < -1 $. To this aim, we consider a still smaller perturbation of the Euclidean case of the form $ \psi(r)=Ar e^{c r^{-\alpha}} +B$ for large $r$ and for some $ c \neq 0 $, $ \alpha>0 $, $B\in\mathbb{R}$. We get
$$  g(r)\sim\frac{1}{r} - \frac{\alpha \, c}{r^{\alpha+1}} $$
and
$$ -K(r)\sim \frac{\alpha(\alpha-1)c}{r^{\alpha+2}} + \frac{\alpha^2 c^2}{r^{2\alpha+2}} \, . $$
Let first $ \alpha \neq 1 $. In that case we have $2\mu= -(\alpha+2)$. Negative curvature at infinity is obtained for  $ \alpha>1 $ if  $ c>0 $ and then we cover the range
$ \mu < -3/2 $, or $ \alpha \in (0,1) $ and $ c<0 $, and then we get all $ \mu \in (-3/2,-1) $. As for the case $ \mu = -3/2 $, may take  $ \psi(r)=Ar e^{-c \, \frac{\log r}{ r}} +B$, and then
$$
g(r)\sim \frac{1}{r} - \frac{c}{r^2} \, (1-\log r)
$$
and
$$ -K(r) \sim  \frac{c}{r^3} + \frac{c^2(1-\log r)^2}{r^4} \, , $$
so that for $ c>0 $ we recover, approximately, the case $ \mu = -3/2 $ as well.

An explicit choice of $\psi$ on $[0,+\infty)$ which gives rise to a $C^1$ Cartan-Hadamard manifold matching such asymptotic behaviour is constructed as follows. Let $\alpha>1, c>0$ or $\alpha\in(0,1),c<0$ hold. Assume also that $A>1$. Then we set
\begin{equation*}
\psi(r)=\begin{cases}
r &\textrm{if}\ r\in[0,\overline{r}]\\
A\left(r\,e^{cr^{-\alpha}}-\overline{r}\,e^{c\overline{r}^{-\alpha}}\right)+\overline{r}&\textrm{if}\ r>\overline{r},
\end{cases}
\end{equation*}
where $\overline{r}$ is the unique solution to the equation
\[
1=A\,e^{cr^{-\alpha}}\left(1-\alpha c r^{-\alpha}\right).
\]
In the case $\mu=-3/2$, hence $c>0$ to get a negative $K$, we let $A>1$ and set
\begin{equation*}
\psi(r)=\begin{cases}
r &\textrm{if}\ r\in[0,\overline{r}]\\
A\left(r\,e^{-c\frac{\log r}r}-\overline{r}\,e^{-c\frac{\log \overline{r}}{\overline{r}}}\right)+\overline{r}&\textrm{if}\ r>\overline{r},
\end{cases}
\end{equation*}
where $\overline{r}$ is the unique solution to
\[
1=A\,e^{-c\frac{\log r}r}\left[1-\frac cr\left(1-\log r\right)\right].
\]

Finally, note that the inverse problem of finding $\psi$ given $-K(r)$ can be viewed as the problem of finding first $g$ and then integrating. Now, the equation for $g$ given $K$, namely \eqref{eq: riccati-g}, is a Riccati equation that does not have closed formulas for the general solution. It can also be seen as the inverse problem for the Schr\"odinger equation $\psi''(r)+K(r)\psi(r)=0$. 

\section{Statement of the results in the quasi-hyperbolic range}\label{sec.smr}

Throughout this whole section and on, it is understood that $ o \in M $ is a given pole, and we implicitly set
$$ r=r(x):=\operatorname{dist}(x,o) \quad \forall x \in M \, . $$
Here we state our main results concerning lower and upper bounds for solutions to \eqref{pme} when our power-type assumptions on the curvatures of $M$ at infinity are quasi-hyperbolic and subcritical, namely when the power lies \emph{strictly} between $-2$ and $2$.

\begin{thm}[Upper bounds, quasi-hyperbolic subcritical powers]\label{pro: ltb-upper}
Let $ M $ be an $n$-dimensional ($ n \ge 2 $) Cartan-Hadamard manifold. Let $ u $ be the solution of the porous medium equation \eqref{pme} starting from a nonnegative, bounded and compactly supported initial datum $u_0 \not\equiv 0 $. Suppose that
\begin{equation}\label{hp-sect}
\mathrm{K}_\omega(x) \le -Q_1 \, r^{2 \mu_1} \quad \forall x \in M \setminus B_R(o)
\end{equation}
for some $ \mu_1 \in (-1,1) $ and $ Q_1,R>0 $, where $ \mathrm{K}_\omega(x) $ denotes the sectional curvature at $ x $ corresponding to any $2$-dimensional tangent plane $ \omega $ containing the radial direction w.r.t.~$o$. Then the following bound holds:
\begin{equation}\label{upper1}
u(x,t) \le \frac{C_1}{(t+t_1)^{\frac1{m-1}}} \left(\gamma_1\left[\log(t+t_1)\right]^{\frac{1-\mu_1}{1+\mu_1}}-r^{1-\mu_1}\right)^{\frac1{m-1}}_+
\end{equation}
$$ \forall x \in M \setminus B_{R_1}(o) \, , \quad \forall t \ge 0 \, , $$
where $ C_1 ,  \gamma_1 , R_1 ,  t_1 $ are positive constants depending on $ n,m,\mu_1,Q_1,R,u_0 $. In particular, the estimates
\begin{equation}\label{bound-smooth-infty}
\| u(t) \|_\infty^{m-1} \le \widetilde{C}_1 \, \frac{(\log t)^{\frac{1-\mu_1}{1+\mu_1}}}{t} \quad \textrm{and} \quad \mathsf{R}(t) \le  \widetilde{\gamma}_1 \, (\log t)^{\frac{1}{1+\mu_1}}
\end{equation}
hold for suitable $ \widetilde{C}_1 , \widetilde{\gamma}_1  >0 $ and all $t$ large enough, where $ \mathsf{R}(t) $ is the radius of the smallest ball that contains the support of the solution at time $t$.
\end{thm}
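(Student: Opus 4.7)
The natural plan is to build an explicit radial supersolution and apply the comparison principle, exploiting the curvature comparison tools recalled in Section~\ref{sect: geom-pre} to reduce matters to a model manifold. More precisely, the sectional-curvature upper bound \eqref{hp-sect} allows us to compare with a model manifold $M_{\widetilde\psi}$ whose generator $\widetilde\psi\in\mathcal A$ behaves, for $r\ge \overline r$ large, like the Type~I function of Section~\ref{sect: exa-mod} with $\alpha=\mu_1+1$ and $a_1\alpha=\sqrt{Q_1}$, so that $-\widetilde\psi''/\widetilde\psi \sim Q_1 r^{2\mu_1}$ and $\widetilde\psi'/\widetilde\psi\sim \sqrt{Q_1}\,r^{\mu_1}$ as $r\to\infty$. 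By the standard Hessian/Laplacian comparison theorem, for any radial, monotonically \emph{decreasing} $f$ on $M$ one has $\Delta f \le \Delta_{\widetilde\psi} f$ pointwise for $r>\overline r$; applied to $f=\bar U^m$ this will turn any radial decreasing supersolution of $v_t=\Delta_{\widetilde\psi} v^m$ into a supersolution on $M$ in the exterior region $\{r>\overline r\}$.

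The candidate supersolution is exactly the profile appearing in \eqref{upper1}:
\begin{equation*}
\bar U(r,t) = \frac{C_1}{(t+t_1)^{\frac1{m-1}}}\left(\gamma_1 \bigl[\log(t+t_1)\bigr]^{\frac{1-\mu_1}{1+\mu_1}} - r^{1-\mu_1}\right)^{\frac{1}{m-1}}_+,
\end{equation*}
which can be viewed as a Barenblatt-type ansatz adapted to the asymptotic geometry: the spatial exponent $1-\mu_1$ and the logarithmic time factor are determined by the scaling balance between $\partial_t\bar U$ and the leading drift term $(n-1)(\widetilde\psi'/\widetilde\psi)(\bar U^m)_r\sim \sqrt{Q_1}\,r^{\mu_1}(\bar U^m)_r$ on $M_{\widetilde\psi}$. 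I will set $F(r,t):=\gamma_1[\log(t+t_1)]^{(1-\mu_1)/(1+\mu_1)}-r^{1-\mu_1}$ and check by direct substitution that, inside the positivity set and for $r\ge R_1$ large, the inequality $\bar U_t \ge \Delta_{\widetilde\psi}\bar U^m$ reduces, after factoring the common prefactor $(t+t_1)^{-m/(m-1)}F^{(2-m)/(m-1)}$, to an algebraic inequality of the form
\begin{equation*}
-\tfrac{1}{m-1}\,F + \tfrac{1-\mu_1}{1+\mu_1}\,\gamma_1 [\log(t+t_1)]^{-\frac{2\mu_1}{1+\mu_1}} \;\ge\; \tfrac{m}{m-1}\,(1-\mu_1)\sqrt{Q_1}\,r^{1-\mu_1+\mu_1} + (\text{lower order}),
\end{equation*}
which is verified by choosing $\gamma_1$ sufficiently large in terms of $m,\mu_1,Q_1$. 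The Laplace-Beltrami term from $(\bar U^m)_{rr}$ and the $\Delta_{\mathbb S^{n-1}}$-piece produce only lower-order contributions as $r\to\infty$, because $1-\mu_1<2$ and $(\widetilde\psi'/\widetilde\psi)$ is the dominant factor.

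Having produced a supersolution on the exterior region, I will handle initial data and the interior $\{r\le R_1\}$ in one sweep: since $u_0$ is bounded and compactly supported, smoothing estimates already known on Cartan-Hadamard manifolds (e.g.~the $L^1$--$L^\infty$ bound from \cite{GM3,GMP}) give that $u(\cdot,t_0)$ is bounded on $M$ for any fixed $t_0>0$, and standard finite-speed-of-propagation arguments for the PME give $\mathrm{supp}\,u(\cdot,t_0)\subset B_{\rho_0}(o)$ for some $\rho_0$. Choosing $t_1$ large enough so that $\bar U(\cdot,0)\ge u(\cdot,t_0)$ on $\partial B_{R_1}(o)$ (possible because the profile converges to $+\infty$ uniformly on compacta as $t_1\to\infty$) and so that the support of $u(\cdot,t_0)$ is contained in the support of $\bar U(\cdot,0)$, comparison on the exterior cylinder $(M\setminus B_{R_1}(o))\times(0,\infty)$ yields $u(\cdot,\cdot+t_0)\le\bar U$ there, which after relabeling is \eqref{upper1}. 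The bounds \eqref{bound-smooth-infty} then follow by evaluating $\bar U$ at $r=0$ (after extending the comparison inside $B_{R_1}$ by the trivially larger $L^\infty$-bound) and by reading off the support.

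The main obstacle is the algebra in the supersolution check: one must track how the logarithmic time factor interacts with the $r^{1-\mu_1}$ spatial profile and the $r^{\mu_1}$ drift, keeping the constant $\gamma_1$ independent of $r,t$; this is where the exponent $(1-\mu_1)/(1+\mu_1)$ is forced. A secondary delicate point is the matching on $\partial B_{R_1}(o)$: since the comparison is only valid in the exterior, one needs the interior $L^\infty$-bound from existing smoothing effects to close the argument, and one must verify that the $\widetilde\psi$-model can be chosen so that the comparison inequality $m(r,\theta)\ge (n-1)\widetilde\psi'(r)/\widetilde\psi(r)$ actually holds for $r\ge R_1$, which follows from \eqref{hp-sect} by a standard Sturm-type argument as recalled in Section~\ref{sect: geom-pre}.
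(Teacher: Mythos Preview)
Your overall plan---build a radial barrier of the stated form and invoke Laplacian comparison against a model manifold---is indeed the paper's strategy. However, the way you close the argument has a genuine gap.

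You verify the supersolution property only for $r\ge R_1$ and then propose to run comparison on the exterior cylinder $(M\setminus B_{R_1}(o))\times(0,\infty)$. Such a comparison requires not only the initial inequality $u(\cdot,t_0)\le\bar U(\cdot,0)$ on $M\setminus B_{R_1}(o)$, but also the lateral boundary condition $u(x,t_0+t)\le\bar U(x,t)$ for \emph{all} $t>0$ and $x\in\partial B_{R_1}(o)$. You never establish the latter. Appealing to smoothing does not rescue this: for $\mu_1\in(-1,0)$ the curvature may tend to zero at infinity, so no Poincar\'e inequality---and hence no \cite{GM3}-type bound---is available in general; for $\mu_1\in(0,1)$ the bound $\|u(t)\|_\infty^{m-1}\lesssim(\log t)/t$ from \cite{GM3} is \emph{weaker} than $\bar U(R_1,t)^{m-1}\approx(\log t)^{(1-\mu_1)/(1+\mu_1)}/t$, so it cannot yield $u\le\bar U$ on $\partial B_{R_1}$. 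The trivial bound $u\le\|u_0\|_\infty$ fails for the same reason once $t$ is large. There is also a slip in your matching step: as $t_1\to\infty$ the profile $\bar U(\cdot,0)$ tends to $0$, not to $+\infty$; one must instead fix $t_1$ (subject to a lower bound) and then enlarge $C_1$ and $\gamma_1$ to dominate the datum.

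The paper avoids the lateral boundary entirely by making $\bar U$ a \emph{global} supersolution on $M$. Two devices are used. First (Lemma~\ref{pro: upper-barrier}), the barrier is extended to $[0,R_0)$ by a quadratic-in-$r$ branch matching $C^1$ at $r=R_0$, and one checks directly that this inner branch is also a supersolution on the model. Second (Lemma~\ref{lem: ode-comparison} and Remark~\ref{rem: f-K}), the model $\psi$ is chosen so that $\psi''/\psi=w(r)$ with $w\equiv 0$ on $[0,R]$ and $w=Q_1 r^{2\mu_1}$ for $r>2R$; since $M$ is Cartan--Hadamard one has $\mathrm K_\omega\le 0\le -w$ on $B_R(o)$ and $\mathrm K_\omega\le -Q_1 r^{2\mu_1}\le -w$ outside, so the Laplacian comparison $m(r,\theta)\ge(n-1)\psi'/\psi$ holds on \emph{all} of $M\setminus\{o\}$. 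Comparison is then applied on the whole manifold, with no lateral boundary. Your algebraic sketch for $r\ge R_1$ is also imprecise (the drift term, after cancellation, contributes a multiple of $F$ rather than of $r$, and both $C_1$ and $\gamma_1$ must be tuned, not just $\gamma_1$), but this is secondary and would be fixed by carrying out the computation carefully as in Lemma~\ref{pro: upper-barrier}.
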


\begin{thm}[Lower bounds, quasi-hyperbolic subcritical powers]\label{pro: ltb-lower}
Let $M$ and $u$ be as in Theorem \ref{pro: ltb-upper}. Suppose that
\begin{equation}\label{hp-ricc}
\mathrm{Ric}_o(x) \ge -(n-1)\,Q_2 \,  r^{2 \mu_2} \quad \forall x \in M \setminus B_R(o)
\end{equation}
for some $ \mu_2 \in (-1,1) $ and $ Q_2,R>0 $, where $ \mathrm{Ric}_o(x) $ is the Ricci curvature at $x$ in the radial direction w.r.t.~$o$. Then the following bound holds:
\begin{equation}\label{bound-1}
u(x,t) \ge \frac{C_2}{(t+t_2)^{\frac1{m-1}}} \left(\gamma_2\left[\log(t+t_2)\right]^{\frac{1-\mu_2}{1+\mu_2}}-r^{1-\mu_2}\right)^{\frac1{m-1}}_+
\end{equation}
$$ \forall x \in M \setminus B_{R_2}(o) \, , \quad \forall t \ge 0 \, , $$
where $ C_2 ,  \gamma_2 , R_2 ,  t_2 $ are positive constants depending on $ n,m,\mu_2,Q_2,R,u_0 $ and on $ \inf_{x \in B_R(o)} \mathrm{Ric}_o(x) $. In particular, the estimates
\begin{equation}\label{bound-smooth-infty-below}
\| u(t) \|_\infty^{m-1} \ge \widetilde{C}_2 \, \frac{(\log t)^{\frac{1-\mu_2}{1+\mu_2}}}{t} \quad \textrm{and} \quad \mathsf{R}(t) \ge  \widetilde{\gamma}_2 \, (\log t)^{\frac{1}{1+\mu_2}}
\end{equation}
hold for suitable $ \widetilde{C}_2 , \widetilde{\gamma}_2  >0 $ and all $t$ large enough, where $ \mathsf{R}(t) $ is the radius of the biggest ball that is contained in the support of the solution at time $t$.
\end{thm}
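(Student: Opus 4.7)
The approach is to construct an explicit radial Barenblatt-type subsolution on an auxiliary model manifold and transfer it to $M$ via the Ricci--Laplacian comparison of Section \ref{sect: geom-pre}. First choose $\psi \in \mathcal{A}$ so that $\psi''(r)/\psi(r) = Q_2\, r^{2\mu_2}$ for $r \ge R$, interpolated smoothly near the origin with $\psi(r)=r$ (a Type~I profile in the notation of Section \ref{sect: exa-mod}); on the compact region $r\le R$ one exploits the bound $\inf_{B_R(o)}\mathrm{Ric}_o$ to adjust $\psi$ suitably. The hypothesis \eqref{hp-ricc} together with the Ricci--Laplacian comparison then yields $m(r,\theta)\le (n-1)\psi'(r)/\psi(r)$ for all $r>0$. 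Since for any radial, radially non-increasing $f$ one has $\Delta f = f''(r)+m(r,\theta)f'(r)\ge f''(r)+(n-1)(\psi'/\psi)f'(r)$ (because $f'\le 0$), any radial non-increasing subsolution of the PME on $M_\psi$ automatically remains a subsolution on $M$, so it suffices to work on $M_\psi$.

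On $M_\psi$, try the profile
\[
\underline{u}(r,t)=\frac{C_2}{(t+t_2)^{\frac{1}{m-1}}}\left(\gamma_2\left[\log(t+t_2)\right]^{\sigma}-r^{1-\mu_2}\right)^{\frac{1}{m-1}}_+,\qquad \sigma:=\frac{1-\mu_2}{1+\mu_2},
\]
and compute $\underline{u}_t-\Delta \underline{u}^m$ on its support, setting $h(t):=\gamma_2[\log(t+t_2)]^\sigma$ and $A(t):=C_2(t+t_2)^{-1/(m-1)}$. The resulting expression naturally splits into a \emph{degenerate} piece proportional to $(h(t)-r^{1-\mu_2})^{(2-m)/(m-1)}$, which dominates near the free boundary, and a \emph{non-degenerate} piece proportional to $(h(t)-r^{1-\mu_2})^{1/(m-1)}$. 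Balancing the degenerate contributions from $\underline{u}_t$ (coefficient $A(t)h'(t)/(m-1)$) and from $\partial_r^2\underline{u}^m$ (coefficient $\tfrac{m(1-\mu_2)^2}{(m-1)^2}A(t)^m r^{-2\mu_2}$), and using $r^{1-\mu_2}\sim h(t)$ near the free boundary so that $r^{-2\mu_2}\sim \gamma_2^{-2\mu_2/(1-\mu_2)}[\log(t+t_2)]^{-2\mu_2\sigma/(1-\mu_2)}$, saturation of the powers of $\log$ forces precisely $\sigma=(1-\mu_2)/(1+\mu_2)$ and leaves a coefficient constraint of the form $\gamma_2^{(1+\mu_2)/(1-\mu_2)}\le \kappa\, C_2^{m-1}$ for an explicit $\kappa=\kappa(n,m,\mu_2,Q_2)$. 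The non-degenerate terms, in which the metric contribution $(n-1)(\psi'/\psi)\partial_r\underline{u}^m$ is of size $O(1)$ because $\psi'/\psi\sim \sqrt{Q_2}\,r^{\mu_2}$ exactly cancels the factor $r^{-\mu_2}$ coming from $\partial_r\underline{u}^m$, are then absorbed by choosing $C_2$ sufficiently small and $t_2$ sufficiently large.

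It remains to initialize and apply parabolic comparison. By standard PME positivity (place a small Euclidean Barenblatt profile under $u_0$ in a local chart, which is a local subsolution by the smoothness of the metric, and propagate positivity forward in time), there exist $t_\ast>0$, $R_2>R$ and $\delta>0$ such that $u(\cdot,t_\ast)\ge \delta$ on $\overline{B_{R_2}(o)}$ and $u(R_2,t)\ge \delta$ for all $t\ge t_\ast$. Choosing $t_2$ large so that the initial support of $\underline{u}(\cdot,0)$ is contained in $B_{R_2}(o)$ and its height is $\le\delta$, parabolic comparison on the exterior cylinder $(M\setminus B_{R_2}(o))\times(0,\infty)$ gives $\underline{u}(x,t)\le u(x,t+t_\ast)$, and the shift $t_\ast$ is absorbed by enlarging $t_2$, producing \eqref{bound-1}. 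The two bounds in \eqref{bound-smooth-infty-below} follow by evaluating \eqref{bound-1} at $r=R_2$ (sup-norm) and by reading off the radius at which $\underline{u}$ vanishes (support).

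The principal obstacle is the subsolution verification of the second paragraph: the free-boundary balance of the degenerate terms fixes $\sigma$ sharply, but the subdominant terms (especially the one generated by the curvature via $(n-1)\psi'/\psi$) must be controlled \emph{uniformly} in $(r,t)$ on the whole support, which forces a careful coordination of the three constants $C_2$, $\gamma_2$, $t_2$. The range $\mu_2\in(-1,1)$ is precisely the one in which the coefficient inequality $\gamma_2^{(1+\mu_2)/(1-\mu_2)}\le \kappa C_2^{m-1}$ admits nontrivial positive solutions while leaving room to absorb the non-degenerate terms.
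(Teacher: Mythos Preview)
Your overall strategy---build a radial Barenblatt-type subsolution on a comparison model, transfer it to $M$ via the Ricci--Laplacian comparison, and then invoke parabolic comparison---is the same as the paper's. The computation in the second paragraph (balancing the ``degenerate'' free-boundary terms to pin down $\sigma$, then absorbing the non-degenerate terms by tuning $C_2,\gamma_2,t_2$) is also essentially what the paper does in Lemma~\ref{pro: lower-barrier}.

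The genuine gap is in the third paragraph, in the comparison step. You run comparison on the exterior cylinder $(M\setminus B_{R_2}(o))\times(0,\infty)$, which requires the lateral boundary inequality $\underline u(R_2,t)\le u(R_2,t+t_\ast)$ for all $t\ge0$. To obtain this you assert that $u(R_2,t)\ge\delta$ for all $t\ge t_\ast$ with a \emph{fixed} $\delta>0$. This is false: by the upper bound of Theorem~\ref{pro: ltb-upper} (or even by the universal estimate $u\le C\,t^{-1/(m-1)}$ valid for any porous medium flow), $u(R_2,t)\to0$ as $t\to\infty$. On the other hand $\underline u(R_2,t)\approx (\log t)^{\sigma/(m-1)}t^{-1/(m-1)}$ is positive for large $t$, so without an \emph{a priori} lower bound on $u$ at $r=R_2$---which is exactly what you are trying to prove---the lateral boundary condition cannot be verified, and the exterior comparison collapses.

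The paper avoids this circularity by not working on an exterior cylinder at all. It extends the barrier to the whole manifold via a second, inner branch that is quadratic in $r$ on $[0,R_0)$ and matches the outer branch $C^1$ at $r=R_0$ (see \eqref{lower-in-lemma} and Step~2 of the proof of Lemma~\ref{pro: lower-barrier}). This is not a cosmetic fix: your single-branch profile has $(\underline u^m)_r\sim -c\,r^{-\mu_2}$ and $(\underline u^m)_{rr}\sim c'\mu_2\,r^{-(1+\mu_2)}$, which are singular at $r=0$ in the whole range $\mu_2\in(-1,1)$, so it cannot be a global subsolution. With the two-branch barrier, comparison is carried out on all of $M\times\mathbb R^+$, there is no lateral boundary, and one only needs the \emph{initial} ordering $\underline u(\cdot,0)\le u_0$ on $M$, which is arranged (after a finite waiting time to make $u$ strictly positive on a large ball, as in \eqref{eq: pme-M-init-lower-t_0-ter}) by taking $C_2$ small and $t_2$ large.
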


In the quasi-hyperbolic and critical case, that is when curvatures are quadratic, we have significantly different bounds.

\begin{thm}[Upper bounds, quasi-hyperbolic critical power]\label{pro: ltb-cri-upper}
Let $ M $, $ \mathrm{K}_\omega $, $u$ and $ \mathsf{R}(t) $ be as in  Theorem \ref{pro: ltb-upper}. Suppose that
\begin{equation}\label{hp-sect-cri}
\mathrm{K}_\omega(x) \le -Q_1 \, r^{2} \quad \forall x \in M \setminus B_R(o)
\end{equation}
for some $ Q_1,R>0 $. Then the following bound holds:
\begin{equation}\label{bound-22}
u(x,t) \le \frac{\kappa_1}{(t+t_1)^{\frac1{m-1}}} \left[ \eta_1  + \frac12 \, \log\log(t+t_1)  - \log r  \right]_+^{\frac1{m-1}}
\end{equation}
$$ \forall x \in M \setminus B_{R_1}(o) \, , \quad \forall t \ge 0 \, , $$
where $  \kappa_1 , \eta_1 , R_1 , t_1 $ are positive constants depending on $ n,m,Q_1,R,u_0 $. In particular, the estimates
\begin{equation}\label{smoothing2}
\|u(t)\|_\infty \le \widetilde{\kappa}_1 \left(\frac{\log \log t}t\right)^{\frac1{m-1}} \quad \textrm{and} \quad \mathsf{R}(t) \le \widetilde{\eta}_1 \, (\log t)^{\frac12}
\end{equation}
hold for suitable $ \widetilde{\kappa}_1 , \widetilde{\eta}_1 >0 $ and all $t$ large enough.
\end{thm}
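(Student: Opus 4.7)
The plan is to build an explicit radial supersolution on a model manifold $M_\psi$ whose curvature profile encodes \eqref{hp-sect-cri}, and to transfer it to $M$ via the Laplacian comparison recalled in Section~\ref{sect: geom-pre}. The condition $\mathrm{K}_\omega\le -Q_1 r^2$ outside $B_R(o)$ suggests picking $\psi\in\mathcal A$ with $\psi''/\psi = Q_1 r^2$ for large $r$ (a smoothed Type~I profile from Section~\ref{sect: exa-mod} corresponding to $\alpha=2$), for which $\psi'(r)/\psi(r)\sim \sqrt{Q_1}\, r$ as $r\to\infty$. For any radially non-increasing $U=U(r,t)$ the comparison yields $m(r,\theta)\ge (n-1)\psi'(r)/\psi(r)$ and hence $\Delta_M U^m\le \Delta_{M_\psi} U^m$ on $\{r>R\}$; thus a PME supersolution on $M_\psi$ is automatically one on $M$ in the same region.

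\textbf{Choice of the barrier.} The shape is dictated by the formal limit $\mu_1\uparrow 1$ in \eqref{upper1}: since $(r^{1-\mu}-1)/(1-\mu)\to -\log r$ and the support-radius exponent $1/(1+\mu_1)\to 1/2$, I would try
\[
U(r,t) := \frac{\kappa_1}{\tau^{1/(m-1)}}\,\Phi(r,t)^{1/(m-1)},\qquad
\Phi(r,t):=\left[\eta_1+\tfrac12\log\log\tau-\log r\right]_+,
\]
with $\tau:=t+t_1$. On $\{\Phi>0\}$ one has $\partial_r\Phi=-1/r$ and $\partial_t\Phi=1/(2\tau\log\tau)$, so the leading part of $U_t$ is $-\tfrac{\kappa_1}{m-1}\,\tau^{-m/(m-1)}\Phi^{1/(m-1)}$, while the leading contribution to $\Delta_{M_\psi} U^m$, coming from the radial drift $(n-1)(\psi'/\psi)\partial_r U^m$, is $-(n-1)\sqrt{Q_1}\,\tfrac{m}{m-1}\,\kappa_1^m\,\tau^{-m/(m-1)}\Phi^{1/(m-1)}$.

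\textbf{Closing the inequality.} Imposing $\kappa_1^{m-1}>\bigl(m(n-1)\sqrt{Q_1}\bigr)^{-1}$ makes the leading $\Phi^{1/(m-1)}$ coefficient in $U_t-\Delta_{M_\psi}U^m$ strictly positive. The remaining contributions, namely the $\partial_r^2 U^m$ term (carrying an $r^{-2}$ factor), the sub-leading part of $\psi'/\psi-\sqrt{Q_1}\,r$, and the $\partial_t\Phi$ piece of order $1/\log\tau$, are absorbed by enlarging $R_1$ and $\eta_1$ so that $r\ge R_1\gg 1$ throughout $\{\Phi>0\}$. To finish, choose $t_1$ large so that $U(r,0)\ge\|u_0\|_\infty$ on $\supp u_0\setminus B_{R_1}(o)$ (feasible because $\Phi(r,0)$ grows with $\tfrac12\log\log t_1$) and $\eta_1$ larger still so that $U(R_1,t)\ge\|u_0\|_\infty\ge\|u(t)\|_\infty$ for all $t\ge 0$ (the latter from the standard $L^\infty$ bound). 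The strong comparison principle then yields \eqref{bound-22}; the two estimates in \eqref{smoothing2} follow by taking $\sup_r$ and by locating the zero set $\{\log r=\eta_1+\tfrac12\log\log\tau\}$ of $\Phi$.

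\textbf{Main obstacle.} The critical case is genuinely tighter than the subcritical one: the coefficient $\tfrac12$ of $\log\log\tau$ is not free but forced by the support-radius balance, and the margin between $U_t$ and the drift part of $\Delta_{M_\psi}U^m$ is only of order $1/\log\tau$, in contrast with the polynomial slack $(\log\tau)^{-(1-\mu_1)/(1+\mu_1)}$ available in the subcritical regime of Theorem~\ref{pro: ltb-upper}. Ensuring that the $r^{-2}$ corrections from $\partial_r^2 U^m$ and the sub-leading parts of $\psi'/\psi$ remain uniformly dominated by this shrinking slack as $\tau\to\infty$ is the technical crux; it amounts to a careful sharpening, rather than a conceptual modification, of the constant-tracking already performed in the proofs of Section~\ref{sec.proofs}.
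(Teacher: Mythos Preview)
Your outer-region analysis is sound in outline, and a direct verification (rather than the paper's device of letting $\mu\to 1^-$ in the subcritical barriers of Lemma~\ref{pro: upper-barrier}, see Lemma~\ref{pro: upper-barrier-2}) would also work after the bookkeeping. But the argument has a genuine gap at the lateral boundary $r=R_1$.

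To run comparison on $\{r>R_1\}\times\mathbb{R}^+$ you need $U(R_1,t)\ge u(R_1,t)$ for every $t\ge 0$. You propose to secure this via $U(R_1,t)\ge\|u_0\|_\infty\ge\|u(t)\|_\infty$, but the first inequality cannot be maintained: with $\tau=t+t_1$,
\[
U(R_1,t)=\frac{\kappa_1}{\tau^{1/(m-1)}}\Bigl[\eta_1+\tfrac12\log\log\tau-\log R_1\Bigr]_+^{1/(m-1)}\longrightarrow 0\quad\text{as }\tau\to\infty,
\]
since the prefactor $\tau^{-1/(m-1)}$ overwhelms the $\log\log$ growth. No choice of $\eta_1$ rescues this, and replacing $\|u_0\|_\infty$ by the Poincar\'e-based smoothing bound $\|u(t)\|_\infty\lesssim (t^{-1}\log t)^{1/(m-1)}$ does not help either, because $\log\log\tau$ cannot dominate $\log\tau$.

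The paper closes this by extending the barrier to a \emph{global} radial supersolution: an inner branch on $\{r<R_0\}$, quadratic in $r$ and $C^1$-matched to the outer branch at $r=R_0$ (formula~\eqref{upper-in-lemma-critic}). On $B_{R_0}(o)$ only the Cartan--Hadamard condition $\mathrm{K}_\omega\le 0$ is needed; the comparison model $\psi$ of Lemma~\ref{lem: ode-comparison} then satisfies $(n-1)\psi'(r)r/\psi(r)\ge E>0$ on $[0,R_0)$, which is enough to make the inner piece a supersolution as well. Once the barrier is global, comparison is required only at $t=0$, where choosing $\kappa_1,\eta_1,t_1$ large suffices. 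You will need to supply this inner construction; your outer sketch is the correct one.
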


\begin{thm}[Lower bounds, quasi-hyperbolic critical power]\label{pro: ltb-cri-lower}
Let $M,u$ be as in Theorem \ref{pro: ltb-upper} and $ \mathrm{Ric}_o , \mathsf{R}(t) $ as in Theorem \ref{pro: ltb-lower}. Suppose that
\begin{equation}\label{hp-ricc-cri}
\mathrm{Ric}_o(x) \ge -(n-1)\,Q_2 \,  r^{2} \quad \forall x \in M \setminus B_R(o)
\end{equation}
for some $ Q_2,R>0 $. Then the following bound holds:
\begin{equation}\label{bound-2}
u(x,t) \ge\frac{\kappa_2}{(t+t_2)^{\frac1{m-1}}} \left[ \eta_2  + \frac12 \, \log\log(t+t_2) - \log r  \right]_+^{\frac1{m-1}}
\end{equation}
$$ \forall x \in M \setminus B_{R_2}(o) \, , \quad \forall t \ge 0 \, , $$
where $  \kappa_2 , R_2 , t_2 $ are positive constants and $ \eta_2  $ is a negative constant depending on $ n,m,Q_2,R,u_0 $ and on $ \inf_{x \in B_R(o)} \mathrm{Ric}_o(x) $. In particular, the estimates
\begin{equation}\label{smoothing2-low}
\|u(t)\|_\infty \ge \widetilde{\kappa}_2 \left(\frac{\log \log t}t\right)^{\frac1{m-1}} \quad \textrm{and} \quad \mathsf{R}(t) \ge \widetilde{\eta}_2 \, (\log t)^{\frac12}
\end{equation}
hold for suitable $ \widetilde{\kappa}_2 , \widetilde{\eta}_2 >0 $ and all $t$ large enough.
\end{thm}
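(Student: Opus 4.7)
The plan is to construct an explicit radial subsolution of the precise form \eqref{bound-2} on a suitable comparison model manifold and transfer it to $M$ by curvature comparison, in the spirit of the proof of Theorem~\ref{pro: ltb-lower} but taken directly at the critical scale $\mu_2=1$. By \eqref{hp-ricc-cri} and the Laplacian comparison results recalled in Section~\ref{sect: geom-pre}, one has $m(r,\theta) \le (n-1)\psi'(r)/\psi(r)$ on $M\setminus B_R(o)$ for any $\psi\in\mathcal A$ satisfying $-\psi''/\psi \ge Q_2\, r^2$ for $r\ge R$. I would choose the Type~I profile matched at infinity with $\psi(r) \sim A\, e^{\sqrt{Q_2}\, r^2/2}$ (in the notation of Section~\ref{sect: exa-mod}), so that $(n-1)\psi'(r)/\psi(r) \sim (n-1)\sqrt{Q_2}\, r$. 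Consequently, any radial $V=V(r,t)$ with $V^m$ nonincreasing in $r$ satisfies $\Delta_M V^m \ge \partial_r^2 V^m + (n-1)(\psi'/\psi)\partial_r V^m = \Delta_{M_\psi} V^m$, so that a radial subsolution on the model $M_\psi$ is automatically a subsolution on $M$ outside a suitable ball.

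I would then take the ansatz $V(r,t) = \kappa_2\,(t+t_2)^{-\sigma}\,\Phi(r,t)_+^{\sigma}$ with $\sigma=1/(m-1)$ and $\Phi(r,t) = \eta_2 + \tfrac12\log\log(t+t_2) - \log r$. A direct computation on $\{\Phi>0\}$, using $\partial_r\Phi=-1/r$, $\partial_t\Phi = [2(t+t_2)\log(t+t_2)]^{-1}$ and $V^m = \kappa_2^{m}(t+t_2)^{-\sigma-1}\Phi^{\sigma+1}$, yields
\begin{equation*}
V_t - \Delta_{M_\psi}V^m \,=\, \kappa_2\,(t+t_2)^{-\sigma-1}\,\Big\{\Phi^{\sigma}\big[(n-1)\sqrt{Q_2}(\sigma+1)\kappa_2^{m-1} - \sigma\big] + \tfrac{\sigma}{2\log(t+t_2)}\,\Phi^{\sigma-1} + O(r^{-2})\Big\}.
\end{equation*}
Choosing $\kappa_2$ so that $(n-1)\sqrt{Q_2}(\sigma+1)\kappa_2^{m-1} < \sigma$ strictly, the resulting negative coefficient $-c_0$ in front of $\Phi^{\sigma}$ absorbs the positive $\Phi^{\sigma-1}/\log(t+t_2)$ term provided $\Phi \ge \sigma/(2c_0\log(t+t_2))$. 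Since on the support $\Phi$ is at most $\tfrac12\log\log(t+t_2)+\eta_2$, this holds on a marginally shrunk version of the positivity set; the discrepancy is absorbed by lowering $\eta_2$ slightly and enlarging $R_2$, $t_2$, so that $V$ is a radial subsolution of the PME on $\{r\ge R_2\}$ in $M_\psi$, and hence in $M$.

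With the pointwise subsolution inequality in hand, I would conclude via comparison on the cylinder $\{r\ge R_2\}\times(0,\infty)$. For $t=0$ a large enough $t_2$ (together with a sufficiently negative $\eta_2$) guarantees that $V(\cdot,0)$ has either empty or negligibly small support inside $\{r\ge R_2\}$, so $V(\cdot,0)\le u_0$ is trivial. The delicate point is the lateral inequality $V(R_2,t)\le u(x,t)$ for $|x|=R_2$ and all $t\ge 0$, which requires a uniform-in-time lower bound $u(R_2,t)\gtrsim (\log\log t/t)^{\sigma}$ on the fixed sphere $\partial B_{R_2}(o)$; this can be secured by combining local positivity of PME solutions for small $t$ (exploiting $u_0\not\equiv 0$) with standard Harnack/Aronson-B\'enilan estimates and an auxiliary Barenblatt-type subsolution that propagates positivity up to $\partial B_{R_2}(o)$. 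Once both matching conditions are met, the comparison principle for strong sub/super-solutions (Section~\ref{geom}) gives \eqref{bound-2} on $M\setminus B_{R_2}(o)$, and the estimates \eqref{smoothing2-low} follow by evaluating at $r=R_2$ and by locating the outer zero of the bracket, which sits at $r=\exp(\eta_2+\tfrac12\log\log(t+t_2))\sim(\log t)^{1/2}$. \textbf{The main obstacle} is verifying the subsolution inequality uniformly up to the free boundary $\{\Phi=0\}$, where $\Phi^{\sigma-1}$ is singular when $m>2$: the $\log\log$ correction provides only an $O(1/\log t)$ margin in which to absorb the positive error terms, forcing a delicate simultaneous calibration of $\kappa_2$, $\eta_2$, $R_2$, $t_2$ that is markedly more sensitive than in the subcritical range $\mu_2<1$.
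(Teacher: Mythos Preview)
Your argument has a genuine gap at the lateral boundary step. Doing comparison on the cylinder $\{r\ge R_2\}\times(0,\infty)$ forces you to verify $V(R_2,t)\le u(x,t)$ on $\partial B_{R_2}(o)$ for \emph{all} $t\ge 0$, i.e.~a uniform-in-time lower bound $u(R_2,t)\gtrsim\big(\tfrac{\log\log t}{t}\big)^{1/(m-1)}$. This is essentially the first estimate in \eqref{smoothing2-low}, so the reasoning is circular; your appeal to ``Harnack/Aronson--B\'enilan estimates and an auxiliary Barenblatt-type subsolution'' does not deliver that precise time rate at a fixed point for arbitrarily large $t$ without already having a barrier of the type you are trying to construct.

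The paper eliminates the lateral boundary altogether by building the barrier \emph{globally} on $[0,\infty)\times\mathbb{R}^+$: see \eqref{upper-in-lemma-critic}, where the outer $\log$-profile for $r\ge R_0$ is matched $C^1$ with a quadratic inner branch on $[0,R_0)$. Comparison is then performed on the whole manifold, and the only matching to check is at $t=0$, arranged by making the support of $\ubar(\cdot,0)$ sit inside a ball where $u_0$ is strictly positive (after waiting a finite time). Technically, the paper does not compute directly at $\mu=1$ either: Lemma~\ref{pro: lower-barrier-2} is obtained by letting $\mu\to 1^-$ in the subcritical barriers of Lemma~\ref{pro: lower-barrier}, with the choices $C=\kappa/(1-\mu)^{1/(m-1)}$ and $\gamma=1+\eta(1-\mu)$. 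This limiting procedure transports the already-verified differential inequalities to the critical case and sidesteps the free-boundary bookkeeping you flag as the main obstacle. Incidentally, that obstacle is less severe than you suggest: the term $-\kappa_2^{m-1}(\sigma{+}1)\sigma\,\Phi^{\sigma-1}/r^2$ hidden in your $O(r^{-2})$ has the favourable sign and, on the support (where $r^2\le e^{2\eta_2}\log(t+t_2)$), it dominates the positive $\tfrac{\sigma}{2\log(t+t_2)}\Phi^{\sigma-1}$ term provided $e^{2\eta_2}\le 2(\sigma{+}1)\kappa_2^{m-1}$, which is exactly the type of constraint encoded in \eqref{eq: pme-M-init-lower-c2-2}.
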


\begin{rem}\rm
These results motivate a number of comments.
\begin{enumerate}[i)]
\item The case of the hyperbolic space treated in \cite{V} basically corresponds to the choice $\mu_1=\mu_2=0$.
\item The situation is particularly clear when $\mu_1=\mu_2=\mu$. In that case upper and lower bounds match and our results read
\begin{equation}\label{bound-alpha}
\begin{aligned}
&\frac{C_0}{(t+t_0)^{\frac1{m-1}}} \! \left(\gamma_0\left[\log(t+t_0)\right]^{\frac{1-\mu}{1+\mu}}-r^{1-\mu}\right)^{\frac1{m-1}}_+ \\
 &\le u(x,t) \le \frac{C_1}{(t+t_0)^{\frac1{m-1}}} \! \left( \gamma_1 \left[\log(t+t_0)\right]^{\frac{1-\mu}{1+\mu}}-r^{1-\mu}\right)^{\frac1{m-1}}_+
\end{aligned}
\end{equation}
$ \forall x \in M \setminus B_{R_0}(o) \, ,  \forall t \ge 0 \, , $, $R_0$ large enough, which bears some similarity with the \it global Harnack principle \rm introduced, in the case of Euclidean fast diffusion, in \cite{DKV}, \cite{BV}.

\item In particular, in that case the support of the solution is contained in a ball whose radius is comparable, for large $t$, with
    $$\mathsf{R}(t)=[\log(t)]^{1/(\mu+1)}\,,
    $$
    and contains a ball whose radius is comparable with the same quantity. Of course \eqref{bound-alpha} also shows the qualitative sharpness of our results.
\item It is remarkable that our results are formally the same both when $\mu_1,\mu_2\in(-1,0)$ and when $\mu_1,\mu_2\in(0,1)$. In fact, the first case corresponds to curvatures \it tending to zero polynomially at infinity\rm, whereas the second corresponds to curvatures bounded in terms of quantities \it diverging polynomially at infinity\rm. We find particularly striking the fact that the bounds valid in the first case are qualitatively similar to the ones valid in ${\mathbb H}^n$, and \it not \rm to the ones valid in ${\mathbb R}^n$, although curvatures tend to zero at infinity. In particular, the function of time appearing in the r.h.s.~of \eqref{bound-smooth-infty} does not depend on the space dimension even in that case.

\item We deal with long-time asymptotics only, short-time bounds being of course identical to the Euclidean ones.


\item
If both curvature bounds \eqref{hp-ricc-cri} and \eqref{hp-sect-cri} hold, of course the two-sided bound

\begin{equation}\label{bound-23}\begin{aligned}
&\kappa_0 \! \left[ \eta_0 \!  + \! \frac{\log\log(t+t_0)}{2} \! - \! \log r  \right]_+^{\frac1{m-1}} \\ & \le \! \frac{u(x,t)}{(t+t_0)^{\frac1{m-1}}} \! \le \! \kappa_1 \! \left[ \eta_1 \! + \! \frac{\log\log(t+t_0)}{2} \! - \! \log r  \right]_+^{\frac1{m-1}}
\end{aligned}\end{equation}

$ \forall x \in M \setminus B_{R_0}(o) \, ,  \forall t \ge 0 \, , $ $R_0$ large enough, which is again a global Harnack principle. 

\item Lower bounds on solutions clearly also hold for nonnegative data $u_0$ which are not necessarily compactly supported but belong e.g. to $L^1$.

\end{enumerate}
\end{rem}

\section{Proofs in the quasi-hyperbolic, subcritical range}\label{sec.proofs}
Our strategy relies on the construction of suitable explicit barriers, namely supersolutions and subsolutions to \eqref{pme}. To this end, we shall make a constant use of Laplacian comparison theorems as recalled in Section \ref{sect: geom-pre}, by choosing a model manifold associated with an appropriate function $\psi$ that reproduces the assumed curvature bounds. In this regard, the next two lemmas will be crucial. The corresponding proofs are quite similar, so we give only the first one.
%
%

\begin{lem}\label{lem: ode-comparison}
Let $R,Q>0$ and $ \mu \in (-1,1] $ be fixed parameters. Let $ \psi $ be the solution to the following ODE:
\begin{equation}\label{ode-psi-upper-lemma}
\psi''(r) = w(r) \, \psi(r) \quad \forall r>0 \, , \quad \psi'(0)=1 \, , \quad \psi(0)=0  \, ,
\end{equation}
where
\begin{equation}\label{ode-psi-upper-f-lemma}
w(r):=
\begin{cases}
0 & \forall r \in [0,R] \, , \\
\frac{Q \, (2R)^{2\mu}}{R} \, (r-R) & \forall r \in (R,2R] \, , \\
Q \, r^{2\mu} & \forall r > 2R \, .
\end{cases}
\end{equation}
Then $ \psi \in C^2([0,+\infty)) $ is positive on $ (0,+\infty) $ with $ \lim_{r \to +\infty} \psi(r)=+\infty $, $ \psi^\prime $ is positive on $ [0,+\infty) $, and there holds
\begin{equation}\label{ode-psi-estimate-fond-lemma}
\frac{\psi^\prime(r)}{\psi(r)} \sim \sqrt{Q} \, r^\mu \quad \textrm{as } r \to +\infty \, .
\end{equation}
\end{lem}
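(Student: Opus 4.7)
The easy part—positivity of $\psi, \psi'$ and divergence of $\psi$—I would dispatch first. On $[0,R]$ the equation reduces to $\psi''=0$ with $\psi(0)=0$, $\psi'(0)=1$, giving the closed form $\psi(r)=r$; in particular $\psi(R)=R>0$ and $\psi'(R)=1>0$. On $[R,+\infty)$ the coefficient $w$ is continuous and nonnegative, so a standard continuation argument propagates positivity: $\psi''=w\,\psi\ge 0$ as long as $\psi\ge 0$, hence $\psi'$ is non-decreasing, whence $\psi'(r)\ge 1$ and $\psi(r)\ge r$ for every $r\ge R$. This simultaneously yields $\psi>0$ on $(0,+\infty)$, $\psi'>0$ on $[0,+\infty)$, and $\psi(r)\to +\infty$. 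Regularity $\psi\in C^2$ follows from $w\in C^0$.

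The substance of the lemma is the asymptotic \eqref{ode-psi-estimate-fond-lemma}. I would reduce it to a Riccati equation for the logarithmic derivative
\[ g(r) := \frac{\psi'(r)}{\psi(r)}, \]
which by direct computation satisfies $g'=w-g^2$, i.e.\ on $(2R,+\infty)$,
\[ g'(r) = Q\,r^{2\mu} - g(r)^2. \]
The moving equilibrium of the frozen right-hand side is exactly $\sqrt{Q}\,r^\mu$, and the task becomes to show that the true trajectory is attracted to it, with a correction of order $r^{-(\mu+1)}$.

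For this I would construct explicit barriers
\[ \bar g(r) := \sqrt{Q}\,r^\mu + C\,r^{-1}, \qquad \underline g(r) := \sqrt{Q}\,r^\mu - A\,r^{-1}, \]
with positive constants $C,A$ to be chosen. A direct expansion gives
\[ \bar g'(r) - Q\,r^{2\mu} + \bar g(r)^2 \;=\; \sqrt{Q}\,(\mu+2C)\,r^{\mu-1} + (C^2-C)\,r^{-2}, \]
and since $\mu-1>-2$ (using $\mu>-1$), the $r^{\mu-1}$ term dominates the $r^{-2}$ residue as $r\to +\infty$. Taking $C$ large enough makes both coefficients positive, so $\bar g$ is a supersolution of the Riccati equation on some $[r_0,+\infty)$; the symmetric computation produces a subsolution $\underline g$ for $A$ large, with $\underline g>0$ for large $r$ because $\mu+1>0$. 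Fixing $r_0$ and then further enlarging $C,A$ I ensure $\underline g(r_0)\le g(r_0)\le \bar g(r_0)$—feasible because $g(r_0)$ is finite and positive by the first step. A standard first-order comparison now closes the argument: the difference $h:=g-\bar g$ satisfies $h'\le -(g+\bar g)\,h$, so the integrating factor $\exp\!\int_{r_0}^{r}(g+\bar g)\,ds$ forces $h\le 0$ throughout $[r_0,+\infty)$, and analogously for the lower barrier. Dividing by $\sqrt{Q}\,r^\mu$ yields
\[ 1 - \frac{A}{\sqrt{Q}\,r^{\mu+1}} \;\le\; \frac{g(r)}{\sqrt{Q}\,r^\mu} \;\le\; 1 + \frac{C}{\sqrt{Q}\,r^{\mu+1}}, \]
and since $\mu+1>0$ both bounds tend to $1$, which is precisely \eqref{ode-psi-estimate-fond-lemma}.

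The main obstacle is choosing the form of the correction in the barrier ansatz so that the $\sqrt{Q}\,r^\mu$ cancellation leaves an $r^{\mu-1}$ contribution of the correct sign uniformly for every $\mu\in(-1,1]$, while simultaneously dominating the $r^{-2}$ error produced by the cross-term. The whole construction rests on the strict inequality $\mu>-1$: at the lower endpoint the orders $r^{\mu-1}$ and $r^{-2}$ coalesce and the barrier scheme breaks down, consistent with the qualitatively different regime analysed separately in the quasi-Euclidean critical case.
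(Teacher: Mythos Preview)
Your argument is correct and uses the same Riccati reduction $g'=w-g^2$ as the paper, but with a different comparison scheme. The paper normalises first, setting $G(r)=g(r)/r^\mu$, and then proceeds in two stages: constant barriers yield $0<\underline k\le G\le\overline k$, after which an $\varepsilon$-argument (absorbing the lower-order term $\mu G/r$ into $G^2$) squeezes $G\to\sqrt Q$. Your single pair of explicit barriers $\sqrt Q\,r^\mu\pm C r^{-1}$ is more direct and in fact yields the sharper conclusion $g(r)/(\sqrt Q\,r^\mu)=1+O(r^{-(\mu+1)})$ rather than merely $o(1)$.

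One small point deserves care. For the lower barrier the residual is $\sqrt Q(\mu-2A)r^{\mu-1}+(A^2+A)r^{-2}$, and enlarging $A$ with $r_0$ held fixed can eventually spoil the subsolution inequality, since the positive $A^2r^{-2}$ term grows quadratically in $A$. This is harmless in practice: the ordering $\underline g(r_0)\le g(r_0)$ needs at worst $A=\sqrt Q\,r_0^{\mu+1}$ (which gives $\underline g(r_0)=0\le g(r_0)$), while the subsolution condition at $r=r_0$ permits $A$ up to roughly $2\sqrt Q\,r_0^{\mu+1}$ once $r_0$ is large. So the two constraints are compatible provided $r_0$ is chosen large enough at the outset, and your scheme goes through with this minor reordering of choices.
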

\begin{proof}
First of all note that, $w$ being a continuous function on $ [0,+\infty) $, by standard ODE theory $ \psi \in C^2([0,+\infty)) $. Moreover,
given the initial conditions in \eqref{ode-psi-upper-lemma} it is straightforward to deduce that $ \psi' $ is bigger than $1$, so that $ \psi $ is positive on $(0,+\infty)$ and diverges as $ +\infty $.

We are therefore left with proving the asymptotic estimate \eqref{ode-psi-estimate-fond-lemma}. In view of the above properties, we know that $ \psi $ satisfies
\begin{equation*}\label{ode-psi-upper-lemma-asymp}
\psi''(r) = Q \, r^{2\mu} \, \psi(r) \quad \forall r>2R \, , \quad \psi'(2R)=a>0 \, , \quad \psi(2R)=b>0  \, .
\end{equation*}
Let us introduce the following change of variables:
\begin{equation*}\label{ode-psi-estimate-chvar}
G(r):=\frac{\psi^\prime(r)}{\psi(r) \, r^\mu} \quad \forall r \ge 2R \, ,
\end{equation*}
from which we deduce that $ G $ is a positive solution of
\begin{equation*}\label{ode-psi-estimate-chvar-ode}
G^\prime(r)=-r^\mu \, G^2(r) - \frac{\mu}{r} \, G(r) + Q \, r^\mu \quad \forall r > 2R \, , \quad G(2R)=\frac{a}{2^\mu R^\mu \, b} \, .
\end{equation*}
Hence, proving \eqref{ode-psi-estimate-fond-lemma} is equivalent to proving that
\begin{equation}\label{ode-psi-estimate-fond-chvar}
\lim_{r \to + \infty} G(r) = \sqrt{Q} \, .
\end{equation}
To this end, first of all let us show that $ G $ is bounded from above and from below by positive constants. Indeed, by comparison
it is enough to find $ \overline{k}>\underline{k}>0 $ such that
$$ 0 \ge - \overline{k}^2 r^\mu - \overline{k} \, \frac{\mu}{r} + Q \, r^\mu \quad \forall r > 2R \, , \quad \overline{k} \ge \frac{a}{2^\mu R^\mu \, b} $$
and
$$ 0 \le - \underline{k}^2 r^\mu - \underline{k} \, \frac{\mu}{r} + Q \, r^\mu \quad \forall r > 2R \, , \quad \underline{k} \le \frac{a}{2^\mu R^\mu \, b} \, . $$
It is straightforward to check that the choices
$$  \overline{k} = \max\left\{ \frac{|\mu|}{2 \, (2R)^{\mu+1}} + \sqrt{Q + \frac{\mu^2}{4 \, (2R)^{2\mu+2}}} \, , \, \frac{a}{2^\mu R^\mu \, b} \right\}  $$
and
$$ \underline{k} = \min\left\{ -\frac{|\mu|}{2 \, (2R)^{\mu+1}} + \sqrt{Q + \frac{\mu^2}{4 \, (2R)^{2\mu+2}}} \, , \, \frac{a}{2^\mu R^\mu \, b} \right\} $$
will do. We have then shown that
\begin{equation}\label{ode-psi-estimate-fond-chvar-bounds}
0 < \underline{k} \le G(r) \le \overline{k} < \infty \quad \forall r \ge 2R \, .
\end{equation}
Thanks to \eqref{ode-psi-estimate-fond-chvar-bounds} and to the fact that $ \mu > -1 $, we infer that for every $ \varepsilon \in (0,1) $ there exists $ R_\varepsilon>2R $ such that
\begin{equation*}\label{ode-psi-estimate-fond-chvar-barriers}
G^\prime(r) \le r^\mu \left[ - (1-\varepsilon) \,G^2(r) + Q \right] \ \ \textrm{and} \ \ G^\prime(r) \ge r^\mu \left[ -(1+\varepsilon) \, G^2(r) + Q \right] \ \ \forall r > R_\varepsilon \, .
\end{equation*}
By comparison with the corresponding \emph{solutions} to the above ODEs, we end up with the inequalities
$$ \sqrt{\frac{Q}{1+\varepsilon}} \le \liminf_{r \to +\infty} G(r) \le \limsup_{r \to +\infty}  G(r) \le \sqrt{\frac{Q}{1-\varepsilon}} \, , $$
whence \eqref{ode-psi-estimate-fond-chvar} given the arbitrariness of $ \varepsilon $.
\end{proof}

\begin{lem}\label{lem: ode-comparison-2}
Let $R,Q,D>0$ and $ \mu \in (-1,1] $ be fixed parameters. Let $ \psi $ be the solution to the same ODE as in \eqref{ode-psi-upper-lemma}, where in the case $ \mu \in (-1,0) $ we set
\begin{equation}\label{ode-psi-lower-f-lemma}
w(r):=
\begin{cases}
D & \forall r \in [0,R] \, , \\
\frac{Q \, (2R)^{2\mu}}{R} \, (r-R) + \frac{ D }{R} \, (2R-r) & \forall r \in (R,2R] \, , \\
Q \, r^{2\mu} & \forall r > 2R \, ,
\end{cases}
\end{equation}
whereas in the case $ \mu \in [0,1] $ we set
\begin{equation}\label{ode-psi-lower-f-lemma-bis}
w(r):= \max \left\{ D \, , Q \, r^{2\mu} \right\} .
\end{equation}
Then $ \psi \in C^2([0,+\infty)) $ is positive on $ (0,+\infty) $ with $ \lim_{r \to +\infty} \psi(r)=+\infty $, $ \psi^\prime $ is positive on $ [0,+\infty) $, and \eqref{ode-psi-estimate-fond-lemma} holds.
\end{lem}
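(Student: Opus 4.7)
The plan is to argue in parallel with the proof of Lemma~\ref{lem: ode-comparison}. The crucial observation is that the weight $w$ in the present lemma coincides with $Q\, r^{2\mu}$ for all $r$ larger than some threshold $r^\ast$ depending only on $R$, $Q$, $D$, $\mu$. Hence, for $r>r^\ast$ the ODE \eqref{ode-psi-upper-lemma} reduces to $\psi''(r)=Q\, r^{2\mu}\,\psi(r)$, which is precisely the tail ODE analysed in Lemma~\ref{lem: ode-comparison}. The modification of $w$ on $[0,r^\ast]$ only changes the values of $a:=\psi'(r^\ast)$ and $b:=\psi(r^\ast)$, which do not enter the asymptotic statement \eqref{ode-psi-estimate-fond-lemma}.

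First I would verify that $w\in C([0,+\infty))$ and $w\ge 0$. In the case $\mu\in(-1,0)$, the middle piece in \eqref{ode-psi-lower-f-lemma} is the affine interpolation between the values $D$ at $r=R$ and $Q(2R)^{2\mu}$ at $r=2R$, and a direct check at the two matching points shows continuity. In the case $\mu\in[0,1]$, $w$ is the pointwise maximum of two continuous nonnegative functions and is therefore continuous. Standard linear ODE theory then yields $\psi\in C^2([0,+\infty))$. Because $w\ge 0$, $\psi(0)=0$ and $\psi'(0)=1$, a short bootstrap gives $\psi''=w\psi\ge 0$, so $\psi'\ge 1$ everywhere; hence $\psi(r)\ge r\to+\infty$ and $\psi'>0$ on $[0,+\infty)$.

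To establish \eqref{ode-psi-estimate-fond-lemma}, I would choose $r^\ast:=2R$ when $\mu\in(-1,0)$, and $r^\ast:=\max\{2R,\,(D/Q)^{1/(2\mu)}\}$ when $\mu\in(0,1]$, so that $w(r)\equiv Q\, r^{2\mu}$ for $r>r^\ast$. On this tail, the same substitution $G(r):=\psi'(r)/(\psi(r)\, r^\mu)$ used in the proof of Lemma~\ref{lem: ode-comparison} yields the Riccati equation
$$G'(r)=-r^\mu\, G^2(r)-\frac{\mu}{r}\, G(r)+Q\, r^\mu$$
with positive initial condition $G(r^\ast)=a/((r^\ast)^\mu\, b)$. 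The two steps used there, namely obtaining uniform two-sided bounds $0<\underline{k}\le G\le\overline{k}$ by comparison with constant super- and subsolutions, and then squeezing $G$ between the solutions of $G'=r^\mu\bigl(-(1\pm\varepsilon)G^2+Q\bigr)$ to conclude $G(r)\to\sqrt{Q}$, carry over verbatim, giving \eqref{ode-psi-estimate-fond-lemma}.

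The only real check specific to this lemma is the continuity of $w$ at the gluing points $r=R$ and $r=2R$ in \eqref{ode-psi-lower-f-lemma}; beyond that, the argument is a direct reuse of the Riccati analysis already developed for Lemma~\ref{lem: ode-comparison}, and I do not expect any substantive new obstacle.
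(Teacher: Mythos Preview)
Your proposal is correct and mirrors exactly what the paper intends: it explicitly omits the proof, stating that it is ``quite similar'' to that of Lemma~\ref{lem: ode-comparison}, and your sketch is precisely the natural adaptation---check continuity and nonnegativity of $w$, deduce $\psi'\ge 1$ and $\psi\to+\infty$, then rerun the Riccati analysis on the tail where $w(r)=Q\,r^{2\mu}$.

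One small point: your choice of threshold $r^\ast$ covers $\mu\in(-1,0)$ and $\mu\in(0,1]$ but skips $\mu=0$, where $w\equiv\max\{D,Q\}$ is constant and $(D/Q)^{1/(2\mu)}$ is undefined. In that case $\psi(r)=\sinh(\sqrt{w}\,r)/\sqrt{w}$ explicitly, and $\psi'/\psi\to\sqrt{\max\{D,Q\}}$; so the asymptotic \eqref{ode-psi-estimate-fond-lemma} with constant $\sqrt{Q}$ holds only when $D\le Q$. This is a minor imprecision in the lemma's statement rather than a flaw in your argument, and it does not affect the downstream barrier estimates (where only an upper bound of the form $\psi'/\psi\le C\,r^\mu$ is needed).
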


\begin{rem}\label{rem: f-K}\rm
It is not difficult to check that the function $w$ chosen as in \eqref{ode-psi-upper-f-lemma} satisfies
$$ w(r) \le Q \, r^{2\mu} \, \chi_{(R,+\infty)} \quad \forall r \in [0,\infty) \, . $$
Similarly, the function $ w $ chosen as in \eqref{ode-psi-lower-f-lemma} or \eqref{ode-psi-lower-f-lemma-bis} satisfies
$$ w(r) \ge Q \, r^{2\mu} \, \chi_{(R,+\infty)} + D \, \chi_{[0,R]}  \quad \forall r \in [0,\infty)  $$
provided $ D \ge Q R^{2\mu} $.
\end{rem}

\subsection{Upper barriers: Theorem \ref{pro: ltb-upper}}\label{sect: proofs-1}

As concerns upper bounds, we shall first provide a family of supersolutions to \eqref{pme} on a model manifold identified by the function $\psi$ of Lemma \ref{lem: ode-comparison}, and then exploit the latter together with Laplacian-comparison results of Section \ref{sect: geom-pre} in order to prove Theorem \ref{pro: ltb-upper}.

\begin{lem}[Upper barriers, $ \mu \in (-1,1) $]\label{pro: upper-barrier}
Let $m>1$. Under the same assumptions and with the same notations as in Lemma \ref{lem: ode-comparison}, consider the function $\Ubar(r,t)$ defined, for all $ t \ge 0 $, by
\begin{equation}\label{upper-in-lemma}
(t+t_0)^{\frac1{m-1}} \, \Ubar(r,t)
:=
\begin{cases}
C \left(\gamma\left[\log(t+t_0)\right]^{\frac{1-\mu}{1+\mu}}-r^{1-\mu}\right)^{\frac1{m-1}}_+  & \!\! \forall r \ge R_0  \, , \\
C \left(\gamma\left[\log(t+t_0)\right]^{\frac{1-\mu}{1+\mu}}-\frac{1-\mu}{2 R_0^{1+\mu}}\,r^2-\!\frac{1+\mu}2 \, R_0^{1-\mu} \right)_+^{\frac1{m-1}} & \!\! \forall r\in[0,R_0) \, .
\end{cases}
\end{equation}
Then $ \Ubar $ is a \emph{supersolution} to the equation
\begin{equation}\label{eq: pme-M}
u_t=(u^m)_{rr}+\frac{(n-1)\psi'}{\psi} \, (u^m)_r\quad \textrm{in }  \mathbb{R}^+ \! \times \mathbb{R}^+
\end{equation}
provided $ R_0 $ is larger than a lower bound depending only on $ n,m,\mu,R,Q $ and $ t_0 , C , \gamma $ comply with the following conditions:
\begin{equation}\label{eq: pme-M-init-upper-t_0}
t_0 > e^{R_0^{1+\mu}} \, ,
\end{equation}
\begin{equation}\label{eq: pme-M-init-upper-c1}
C \ge \left[\frac{2}{m \, (1-\mu) \, (n-1)\sqrt{Q/2}}\right]^{\frac1{m-1}} \vee \left[\frac{2 R_0^{1+\mu}}{m \, (1-\mu) \, (E+1)}\right]^{\frac1{m-1}} \, ,
\end{equation}
\begin{equation}\label{eq: pme-M-init-upper-c2}
\begin{aligned}
\gamma
\ge \left[ \frac{m (1+\mu)(1-\mu) C^{m-1}}{m-1} \right]^{\frac{1-\mu}{1+\mu}} \vee \left( 1 + \frac{m \, (1-\mu)^2 \, C^{m-1}}{(m-1) \! \left[ m(1-\mu)(1+E) C^{m-1} - R_0^{1+\mu} \right]} \right) ,
\end{aligned}
\end{equation}
where $ E $ is a positive constant depending only on $n,\mu,R,R_0,Q$.
\end{lem}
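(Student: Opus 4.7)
The plan is to verify that $\Ubar$, as defined in \eqref{upper-in-lemma}, is a classical supersolution of the radial PME \eqref{eq: pme-M} pointwise on its positivity set, handling the outer region $r\ge R_0$ and the inner region $0\le r<R_0$ separately. The two pieces are engineered so that $\Ubar$ is $C^1$ across $r=R_0$: with $\alpha:=(1-\mu)/(2R_0^{1+\mu})$ and $\beta:=(1+\mu)R_0^{1-\mu}/2$ one has $\alpha R_0^2+\beta=R_0^{1-\mu}$ and $2\alpha R_0=(1-\mu)R_0^{-\mu}$, matching values and first $r$-derivatives of the outer profile; moreover $(\Ubar^m)_r(0,t)=0$ gives smoothness at the pole. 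The constraint $t_0>e^{R_0^{1+\mu}}$ forces $F(0)>R_0^{1-\mu}$, so the outer branch is already active at $t=0$. Outside the support $\Ubar\equiv 0$ and the inequality is trivial, while the standard PME supersolution machinery (see e.g.~\cite{Vaz07}) absorbs the free boundary and the matching interface.

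\textbf{Outer region.} Setting $\tau:=t+t_0$ and $G:=F(t)-r^{1-\mu}$ with $F(t):=\gamma[\log\tau]^{(1-\mu)/(1+\mu)}$, a direct computation of $\Ubar_t$, $(\Ubar^m)_r$, $(\Ubar^m)_{rr}$, followed by division of the desired inequality by $\frac{C}{m-1}\tau^{-m/(m-1)}G^{(2-m)/(m-1)}$, reduces it to
$$
\tau F'(t)+G\bigl[m(1-\mu)C^{m-1}\Lambda(r)-1\bigr]\ge\frac{m(1-\mu)^2 C^{m-1}}{m-1}\,r^{-2\mu},
$$
where $\Lambda(r):=(n-1)(\psi'/\psi)r^{-\mu}-\mu r^{-\mu-1}$. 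By Lemma \ref{lem: ode-comparison}, $\Lambda(r)\to(n-1)\sqrt Q$ as $r\to\infty$, so enlarging $R_0$ we may assume $\Lambda(r)\ge(n-1)\sqrt{Q/2}$ on $[R_0,\infty)$; the first lower bound on $C$ in \eqref{eq: pme-M-init-upper-c1} then forces the coefficient of $G$ to be at least $+1$. Since $G\ge 0$, it suffices to verify $\tau F'(t)+G\ge\frac{m(1-\mu)^2 C^{m-1}}{m-1}r^{-2\mu}$, which after the change of variable $s=r^{1-\mu}$ is checked at the two endpoints $s=F(t)$ and $s=R_0^{1-\mu}$: the first gives an algebraic condition in $\gamma$ (both sides scale as $(\log\tau)^{-2\mu/(1+\mu)}$) matching the first lower bound in \eqref{eq: pme-M-init-upper-c2}, while the second uses $G\ge(\gamma-1)R_0^{1-\mu}$ to dominate the constant RHS via the second lower bound on $\gamma$.

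\textbf{Inner region.} An analogous reduction with $\widetilde G:=F(t)-\alpha r^2-\beta$ yields
$$
\tau F'(t)\ge\widetilde G\bigl[1-2m\alpha C^{m-1}\bigl(1+(n-1)r\psi'/\psi\bigr)\bigr]+\frac{4m\alpha^2 r^2 C^{m-1}}{m-1}.
$$
Setting $E:=\inf_{r\in[0,R_0]}(n-1)r\psi'(r)/\psi(r)$ (nonnegative and finite, since $\psi(r)\sim r$ near $0$ by Lemma \ref{lem: ode-comparison}), the second lower bound on $C$ in \eqref{eq: pme-M-init-upper-c1} is calibrated so that $2m\alpha C^{m-1}(1+E)\ge 2$, whence the bracket multiplying $\widetilde G$ is $\le -1$ on $[0,R_0]$; the remaining constant term is bounded by $\frac{m(1-\mu)^2 C^{m-1}}{(m-1)R_0^{2\mu}}$, the same quantity appearing as the outer RHS at $r=R_0$, and is absorbed by the second lower bound on $\gamma$ in \eqref{eq: pme-M-init-upper-c2}.

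\textbf{Main obstacle.} The technical heart is the hierarchical choice of constants in the order $R_0\to C\to t_0\to\gamma$: $R_0$ must first be large enough for the outer asymptotics of $\psi'/\psi$ to take effect; $C$ then must control both the outer bracket (via $\sqrt{Q}$) and the inner bracket (via $E=E(R_0)$); $t_0$ must be large enough that the outer profile is non-trivial past $R_0$ at $t=0$; and $\gamma$ must finally close the residual algebraic inequalities uniformly in $t\ge 0$. Conditions \eqref{eq: pme-M-init-upper-t_0}--\eqref{eq: pme-M-init-upper-c2} encode exactly this chain of dependencies, and the verification in each region is then entirely mechanical.
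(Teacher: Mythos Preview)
Your overall strategy coincides with the paper's: compute the derivatives of $\Ubar$ explicitly in each of the two regions, reduce the supersolution inequality to an algebraic one, and then verify the latter using the asymptotics of $\psi'/\psi$ (Lemma~\ref{lem: ode-comparison}) together with the stated lower bounds on $C,\gamma,t_0$. The $C^1$ matching at $r=R_0$, the definition of $E$, and the reduction you write down in each region are all correct.

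There is, however, a bookkeeping gap that prevents your argument from establishing the lemma \emph{as stated}. After obtaining the reduced inequality you replace the coefficient $m(1-\mu)C^{m-1}\Lambda(r)-1$ of $G$ by its lower bound $1$ (and analogously the bracket multiplying $\widetilde G$ by $-1$ in the inner region). With this simplification, your endpoint check at $r=R_0$ requires
\[
\gamma \;\ge\; 1+\frac{m(1-\mu)^2 C^{m-1}}{(m-1)\,R_0^{1+\mu}}\,,
\]
whereas the second alternative in \eqref{eq: pme-M-init-upper-c2} only guarantees
\[
\gamma \;\ge\; 1+\frac{m(1-\mu)^2 C^{m-1}}{(m-1)\bigl[m(1-\mu)(1+E)C^{m-1}-R_0^{1+\mu}\bigr]}\,.
\]
Since \eqref{eq: pme-M-init-upper-c1} gives $m(1-\mu)(1+E)C^{m-1}\ge 2R_0^{1+\mu}$, the denominator on the right is at least $R_0^{1+\mu}$, so the paper's condition is \emph{weaker} than yours; equality holds only when $C$ attains its minimal admissible value. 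Hence for $C$ strictly above the threshold, \eqref{eq: pme-M-init-upper-c2} does not imply your condition, and your claim that the $r=R_0$ check is ``absorbed by the second lower bound on $\gamma$'' fails.

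The paper avoids this in two ways. In the outer region it does not check at $r=R_0$ at all: keeping the full coefficient $(2k-1)$ of $G$ (where $C^{m-1}=\tfrac{2k}{m(1-\mu)(n-1)\sqrt{Q/2}}$), it rewrites the inequality so that the $r$-dependent side is $c_1 r^{-2\mu}+(2k-1)r^{1-\mu}$, shows this is monotone increasing for $r\ge R_0$ with $R_0$ large enough \emph{independently of $k$}, and then verifies it only at the free boundary, yielding exactly the first alternative in \eqref{eq: pme-M-init-upper-c2}. In the inner region it likewise retains the full bracket $m(1-\mu)(1+E)C^{m-1}/R_0^{1+\mu}-1$ rather than bounding it below by $1$, and solving the resulting inequality for $\gamma$ produces precisely the second alternative in \eqref{eq: pme-M-init-upper-c2}. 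The fix to your argument is simply not to throw away the exact size of the coefficient of $G$ (resp.\ $\widetilde G$).
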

\begin{proof}
We split the proof in 3 steps, which correspond to a barrier in an outer region, a barrier in an inner region and the final global barrier.

\smallskip
\noindent $ \bullet $ \textsc{Step 1: an upper barrier for large $r$}.
Let $ r \ge R_0 $. In view of \eqref{upper-in-lemma}, by direct computations we get:
\[\begin{aligned}
(t+t_0)^{\frac m{m-1}} \, \Ubar_t(r,t)=&-\frac{C}{m-1}\left(\gamma\left[\log(t+t_0)\right]^{\frac{1-\mu}{1+\mu}}-r^{1-\mu}\right)^{\frac1{m-1}}_+\\&+
\frac{C\gamma\,(1-\mu)\left[\log(t+t_0)\right]^{-\frac{2\mu}{1+\mu}}}{(1+\mu)(m-1)}\left(\gamma\left[\log(t+t_0)\right]^{\frac{1-\mu}{1+\mu}} - r^{1-\mu}\right)^{\frac{2-m}{m-1}}_+ ,
\end{aligned}
\]
\medskip
\[
\begin{aligned}
(t+t_0)^{\frac m{m-1}} \, \left(\Ubar^m\right)_{\!r}(r,t)=-\frac{m \, C^m (1-\mu)\, r^{-\mu}}{m-1}\left(\gamma\left[\log(t+t_0)\right]^{\frac{1-\mu}{1+\mu}}-r^{1-\mu}\right)^{\frac1{m-1}}_+ ,
\end{aligned}
\]
\medskip
\[
\begin{aligned}
(t+t_0)^{\frac m{m-1}}\left( \Ubar^m \right)_{\!rr}(r,t)=& \frac{m \, C^m \, \mu (1-\mu) \, r^{-(1+\mu)}}{m-1}\left(\gamma\left[\log(t+t_0)\right]^{\frac{1-\mu}{1+\mu}}-r^{1-\mu}\right)^{\frac1{m-1}}_+ \\
& + \frac{m \, C^m \, (1-\mu)^2 \, r^{-2\mu}}{(m-1)^2}\left(\gamma\left[\log(t+t_0)\right]^{\frac{1-\mu}{1+\mu}}-r^{1-\mu}\right)^{\frac{2-m}{m-1}}_+ .
\end{aligned}
\]
With a slight abuse of notations, the quantity $\{y\}_+^{(2-m)/(m-1)}=0$ is meant to be zero if $y\le0$, for all $ m>1 $. Note that $ \Ubar_{t}$, $ \left(\Ubar^m\right)_{\!r} $ and $ \left( \Ubar^m \right)_{\!rr} $ are always integrable as functions of $r$, so that it does make sense to consider $ \Ubar $ as a (weak) supersolution of \eqref{eq: pme-M}. Now let us choose $ R_0 $ so large that
\begin{equation}\label{eq: choice-R0}
\frac{(n-1)\psi'(r)}{\psi(r)\,r^\mu} - \frac{\mu}{r^{1+\mu}} \ge (n-1)\sqrt{Q/2}  \quad \forall r \ge R_0 \, .
\end{equation}
We point out that such a choice is feasible since $ \mu>-1 $ and Lemma \ref{lem: ode-comparison} ensures that \eqref{ode-psi-estimate-fond-lemma} holds. In particular, a sufficient condition for $ \Ubar(r,t) $  to be a supersolution in the region $ [R_0,+\infty) \times \mathbb{R}^+ $ is
\begin{equation}\label{barrier1}
\begin{aligned}
& \left[m \, (1-\mu) \, (n-1)\sqrt{Q/2} \, C^{m-1}-1 \right] \left(\gamma\left[\log(t+t_0)\right]^{\frac{1-\mu}{1+\mu}}-r^{1-\mu}\right)^{\frac1{m-1}}_+ \\ & + \frac{\gamma\,(1-\mu)\left[\log(t+t_0)\right]^{-\frac{2\mu}{1+\mu}}}{1+\mu} \left(\gamma\left[\log(t+t_0)\right]^{\frac{1-\mu}{1+\mu}}-r^{1-\mu}\right)^{\frac{2-m}{m-1}}_+\\
\ge &\,\frac{m \, C^{m-1} \, (1-\mu)^2 \, r^{-2\mu}}{m-1}\left(\gamma\left[\log(t+t_0)\right]^{\frac{1-\mu}{1+\mu}}-r^{1-\mu}\right)^{\frac{2-m}{m-1}}_+ .
\end{aligned}
\end{equation}
In order to satisfy \eqref{barrier1}, we have to pick  $C$, $\gamma$ and $t_0$ large enough. First of all, for any given $ k \ge 1 $ let us set
\begin{equation}\label{C cond}
C=\left[\frac{2k}{m \, (1-\mu) \, (n-1)\sqrt{Q/2}}\right]^{\frac1{m-1}} ,
\end{equation}
so that \eqref{barrier1} becomes
\begin{equation*}
\begin{aligned}
& (2k-1) \left(\gamma\left[\log(t+t_0)\right]^{\frac{1-\mu}{1+\mu}}-r^{1-\mu}\right)^{\frac1{m-1}}_+ \\
& + \frac{\gamma\,(1-\mu)\left[\log(t+t_0)\right]^{-\frac{2\mu}{1+\mu}}}{1+\mu} \left(\gamma\left[\log(t+t_0)\right]^{\frac{1-\mu}{1+\mu}}-r^{1-\mu}\right)^{\frac{2-m}{m-1}}_+\\
\ge & \frac{2k \, (1-\mu) \, r^{-2\mu}}{(m-1)(n-1)\sqrt{Q/2}}\left(\gamma\left[\log(t+t_0)\right]^{\frac{1-\mu}{1+\mu}}-r^{1-\mu}\right)^{\frac{2-m}{m-1}}_+ .
\end{aligned}\end{equation*}
In the region
\begin{equation}\label{barrier-region}
R_0^{1-\mu} \le r^{1-\mu}\le \gamma\left[\log(t+t_0)\right]^{\frac{1-\mu}{1+\mu}} \, ,
\end{equation}
upon supposing with no loss of generality that $ \gamma \ge 1 $ and $ \log t_0 > R_0^{1+\mu} $, the previous inequality reads
\begin{equation}\label{barrier3}
\begin{aligned}
& \gamma \, (2k-1)\left[\log(t+t_0)\right]^{\frac{1-\mu}{1+\mu}} + \frac{\gamma\,(1-\mu)}{1+\mu} \left[\log(t+t_0)\right]^{-\frac{2\mu}{1+\mu}} \\
\ge &  \frac{2k \, (1-\mu) }{(m-1)(n-1)\sqrt{Q/2}} \, r^{-2\mu} + (2k-1) \, r^{1-\mu} \, .
\end{aligned}
\end{equation}
It is straightforward to check that one can choose $ R_0 $ so large (depending only on $n$, $m$, $ Q $ and independent of $ k \ge 1 $, $ \mu \in (-1,1)$) that the r.h.s.~of \eqref{barrier3} is monotone increasing for $ r \ge R_0 $. Hence, in view of \eqref{barrier-region}, it is enough to require that \eqref{barrier3} is satisfied at $ r=\gamma^{1/(1-\mu)}\left[\log(t+t_0)\right]^{1/(1+\mu)} $, which leads to the condition
\begin{equation}\label{barrier4}
\gamma \ge \left[ \frac{2k \, (1+\mu)}{(m-1)(n-1)\sqrt{Q/2}} \right]^{\frac{1-\mu}{1+\mu}}  \, .
\end{equation}
We have therefore shown that $\Ubar$ as in \eqref{upper-in-lemma} is a supersolution to \eqref{eq: pme-M} in the region \eqref{barrier-region} provided \eqref{C cond} holds with $ k \ge 1 $, $ \gamma \ge 1 $ satisfies \eqref{barrier4}, $ \log t_0 > R_0^{1+\mu} $ and $R_0$ is large enough. On the other hand, it is apparent that \eqref{barrier1} is fulfilled in the region $r >  \gamma^{1/(1-\mu)}\left[\log(t+t_0)\right]^{1/(1+\mu)}$.

\smallskip
\noindent $ \bullet $ \textsc{Step 2: an upper barrier for small $r$}. We now move to the region $ r \in [0,R_0) $: note that the two branches of $ \Ubar $ match at $r=R_0$ with $ C^1 $ regularity. Still from \eqref{upper-in-lemma}, we get:
\[
\begin{aligned}
& (t+t_0)^{\frac m{m-1}} \, \Ubar_t(r,t) \\
= & - \frac{C}{m-1} \left(\gamma\left[\log(t+t_0)\right]^{\frac{1-\mu}{1+\mu}}-\frac{1-\mu}{2 R_0^{1+\mu}}\,r^2-\!\frac{1+\mu}2 \, R_0^{1-\mu} \right)_+^{\frac1{m-1}} \\
& + \frac{C\gamma\,(1-\mu)\left[\log(t+t_0)\right]^{-\frac{2\mu}{1+\mu}}}{(1+\mu)(m-1)} \left(\gamma\left[\log(t+t_0)\right]^{\frac{1-\mu}{1+\mu}}-\frac{1-\mu}{2 R_0^{1+\mu}}\,r^2-\!\frac{1+\mu}2 \, R_0^{1-\mu} \right)_+^{\frac{2-m}{m-1}} ,
\end{aligned}
\]
\medskip
\[
\begin{aligned}
& (t+t_0)^{\frac m{m-1}} \, \left(\Ubar^m\right)_{\!r}(r,t) \\
= & - \frac{m\,C^m \, (1-\mu) \, r }{(m-1) \, R_0^{1+\mu}} \left(\gamma\left[\log(t+t_0)\right]^{\frac{1-\mu}{1+\mu}}-\frac{1-\mu}{2 R_0^{1+\mu}}\,r^2-\!\frac{1+\mu}2 \, R_0^{1-\mu} \right)_+^{\frac1{m-1}} ,
\end{aligned}
\]
\medskip
\[
\begin{aligned}
& (t+t_0)^{\frac m{m-1}}\left( \Ubar^m \right)_{\!rr}(r,t) \\
= & - \frac{m\,C^m \, (1-\mu) }{(m-1) \, R_0^{1+\mu}} \left(\gamma\left[\log(t+t_0)\right]^{\frac{1-\mu}{1+\mu}}-\frac{1-\mu}{2 R_0^{1+\mu}}\,r^2-\!\frac{1+\mu}2 \, R_0^{1-\mu} \right)_+^{\frac1{m-1}} \\
& + \frac{m\,C^m \, (1-\mu)^2 \, r^2 }{(m-1)^2 \, R_0^{2+2\mu}} \left(\gamma\left[\log(t+t_0)\right]^{\frac{1-\mu}{1+\mu}}-\frac{1-\mu}{2 R_0^{1+\mu}}\,r^2-\!\frac{1+\mu}2 \, R_0^{1-\mu} \right)_+^{\frac{2-m}{m-1}} .
\end{aligned}
\]
The properties of $ \psi $ and $ \psi^\prime $ from Lemma \ref{lem: ode-comparison} ensure that there exists a positive constant $ E=E(n,\mu,R,R_0,Q) $ such that
\begin{equation}\label{eq: consequence-R0}
\frac{(n-1)\psi'(r) \, r}{\psi(r)} \ge E  \quad \forall r \in [0,R_0) \, .
\end{equation}
Hence, to make sure that $ \Ubar(r,t) $ is a supersolution in the region $ [0,R_0) \times \mathbb{R}^+ $ it is enough to require
\begin{equation}\label{inner}
\begin{aligned}
& \left[ \frac{m \, (1-\mu) \, (E+1)}{R_0^{1+\mu}} \, C^{m-1} - 1 \right] \left(\gamma\left[\log(t+t_0)\right]^{\frac{1-\mu}{1+\mu}}-\frac{1-\mu}{2 R_0^{1+\mu}}\,r^2-\!\frac{1+\mu}2 \, R_0^{1-\mu} \right)_+^{\frac1{m-1}} \\
& + \frac{\gamma\,(1-\mu)\left[\log(t+t_0)\right]^{-\frac{2\mu}{1+\mu}}}{1+\mu} \left(\gamma\left[\log(t+t_0)\right]^{\frac{1-\mu}{1+\mu}}-\frac{1-\mu}{2 R_0^{1+\mu}}\,r^2-\!\frac{1+\mu}2 \, R_0^{1-\mu} \right)_+^{\frac{2-m}{m-1}} \\
\ge  & \frac{m\,C^{m-1} \, (1-\mu)^2 \, r^2 }{(m-1) \, R_0^{2+2\mu}} \left(\gamma\left[\log(t+t_0)\right]^{\frac{1-\mu}{1+\mu}}-\frac{1-\mu}{2 R_0^{1+\mu}}\,r^2-\!\frac{1+\mu}2 \, R_0^{1-\mu} \right)_+^{\frac{2-m}{m-1}} .
\end{aligned}
\end{equation}
Upon recalling the choice of $C$ \eqref{C cond}, the assumptions $ \gamma \ge 1 $ and $ \log t_0 > R_0^{1+\mu} $, we deduce that \eqref{inner} reads
\begin{equation}\label{inner-bis}
\begin{aligned}
\left[ \frac{2k\,(E+1)}{(n-1)\sqrt{Q/2} \, R_0^{1+\mu} } - 1 \right] \left(\gamma\left[\log(t+t_0)\right]^{\frac{1-\mu}{1+\mu}}-\frac{1-\mu}{2 R_0^{1+\mu}}\,r^2-\!\frac{1+\mu}2 \, R_0^{1-\mu} \right) & \\
+ \frac{\gamma\,(1-\mu)\left[\log(t+t_0)\right]^{-\frac{2\mu}{1+\mu}}}{1+\mu} - \frac{2k \, (1-\mu)}{(m-1)\,(n-1)\sqrt{Q/2} \, R_0^{2+2\mu}} \, r^2 & \ge 0 \, .
\end{aligned}
\end{equation}
By neglecting the penultimate term we find that sufficient conditions for \eqref{inner-bis} to hold are e.g.
\[
k \ge \frac{(n-1)\sqrt{Q/2} \, R_0^{1+\mu}}{E+1} \quad \textrm{and} \quad \gamma\ge 1+\frac{2k\,(1-\mu)}{(m-1)\left[2k \,(E+1) - (n-1)\sqrt{Q/2} \, R_0^{1+\mu} \right]} \, ,
\]
which have to be understood in addition to $ k \ge 1 $ and \eqref{barrier4}.

\smallskip
\noindent $ \bullet $ \textsc{Step 3: the global upper barrier}. In conclusion, through steps 1 and 2 we have shown that the function $\Ubar$ defined by \eqref{upper-in-lemma} is a supersolution to \eqref{eq: pme-M} provided $ R_0 $ is large enough (with a lower bound depending only on $ n,m,\mu,R,Q $), $ \log t_0 > R_0^{1+\mu} $ and $ C $ and $ \gamma $ satisfy the following conditions:
\begin{equation}\label{C cond-bis-1}
C=\left[\frac{2k}{m \, (1-\mu) \, (n-1)\sqrt{Q/2}}\right]^{\frac1{m-1}} \quad \textrm{with} \quad k \ge 1 \vee \frac{(n-1)\sqrt{Q/2} \, R_0^{1+\mu}}{E+1} \, ,
\end{equation}
\begin{equation}\label{C cond-bis-2}
\gamma
\ge \left[ \frac{2k \, (1+\mu)}{(m-1)(n-1)\sqrt{Q/2}} \right]^{\frac{1-\mu}{1+\mu}} \vee \left( \! 1+ \! \frac{2k\,(1-\mu)}{(m-1) \! \left[2k (E+1) - (n-1)\sqrt{Q/2} \, R_0^{1+\mu} \right]} \right) ,
\end{equation}
which yield \eqref{eq: pme-M-init-upper-t_0}--\eqref{eq: pme-M-init-upper-c2} (the parameter $ k $ has been introduced for later purpose -- see the proof of Lemma \ref{pro: upper-barrier-2}).
\end{proof}

\begin{proof}[Proof of Theorem \ref{pro: ltb-upper}]
First of all, by exploiting Lemma \ref{lem: ode-comparison} with $ Q=Q_1 $ and $ \mu=\mu_1 $ ($R$ is meant to be the same as in \eqref{hp-sect}), Remark \ref{rem: f-K} and recalling that $ \mathrm{K}_\omega \le 0 $ everywhere, we infer that the corresponding function $ \psi $ satisfies
$$
\mathrm{K}_\omega(x) \le -\frac{\psi^{\prime\prime}(r)}{\psi(r)} \quad \forall x \in M \setminus \{ o \} \, .
$$
Hence, we can apply the Laplacian-comparison results discussed in Section \ref{sect: geom-pre} along with Lemma \ref{pro: upper-barrier} to deduce that the function $ x \mapsto \Ubar(r(x),t)$, with $ \Ubar $ as in \eqref{upper-in-lemma}, is a supersolution to \eqref{pme}, since \eqref{eq: pme-M} is precisely the differential equation appearing in \eqref{pme}, for radial functions, on the Riemannian model identified by $ \psi $.

Let us set $  R_1 =R_0 $ and $ t_1 = t_0 $ satisfying \eqref{eq: pme-M-init-upper-t_0}. In order to complete the proof, we only have to show that the parameters $ C=C_1 $ and $ \gamma=\gamma_1 $, subject to \eqref{eq: pme-M-init-upper-c1} and \eqref{eq: pme-M-init-upper-c2}, can be chosen in such a way that $ \Ubar(r(x),0) \ge u_0(x) $ for all $ x \in M $. With no loss of generality, we can suppose that $ u_0 $ is supported in $ B_{\mathcal{R}}(o) $ for some $ \mathcal{R} \ge R_1 $ and has maximum $ \mathcal{M}>0 $. Then, a sufficient condition for $ \Ubar(r(x),0) $ to lie above $u_0(x)$ is
\begin{equation*}\label{cond}
C_1 \ge t_1^{\frac{1}{m-1}} \, \mathcal{M} \, , \quad \gamma_1 \ge  \frac{1+\mathcal{R}^{1-\mu_1}}{\left( \log t_1 \right)^{\frac{1-\mu_1}{1+\mu_1}}} \, ,
\end{equation*}
which must be added to \eqref{eq: pme-M-init-upper-c1} and \eqref{eq: pme-M-init-upper-c2} (with $ C=C_1 $ and $ \gamma=\gamma_1 $). Finally, as for \eqref{bound-smooth-infty}, we just remark that $ \Ubar(\cdot,t) $ is supported precisely in $ B_{\mathsf{R}_1(t)}(o) $ with $ \mathsf{R}_1(t) := \gamma_1^{{1}/{(1-\mu_1)}} \, \left[ \log(t+t_1) \right]^{1/(1+\mu_1)} $ and attains its maximum at $ r=0 $.
\end{proof}

\subsection{Lower barriers: Theorem \ref{pro: ltb-lower}}

As for lower bounds, we proceed in a similar way as above, that is we first obtain a family of subsolutions to \eqref{pme} on a Riemannian model related to the curvature bounds \eqref{hp-ricc}, and then prove Theorem \ref{pro: ltb-lower} by means of such result.

\begin{lem}[Lower barriers, $ \mu\in(-1,1) $]\label{pro: lower-barrier}
Let $m>1$. Under the same assumptions and with the same notations as in Lemma \ref{lem: ode-comparison-2}, consider the function $\ubar(r,t)$ defined, for all $ t \ge 0 $, by
\begin{equation}\label{lower-in-lemma}
(t+t_0)^{\frac1{m-1}} \, \ubar(r,t) := \!
\begin{cases}
C \left(\gamma\left[\log(t+t_0)\right]^{\frac{1-\mu}{1+\mu}}-r^{1-\mu}\right)^{\frac1{m-1}}_+  & \! \forall r \ge R_0  \, , \\
C \left(\gamma\left[\log(t+t_0)\right]^{\frac{1-\mu}{1+\mu}}-\frac{1-\mu}{2 R_0^{1+\mu}}\,r^2-\frac{1+\mu}2 R_0^{1-\mu} \right)_+^{\frac1{m-1}} & \! \forall r\in[0,R_0) \, .
\end{cases}
\end{equation}
Then $ \ubar $ is a \emph{subsolution} to \eqref{eq: pme-M} provided $ R_0 $ is larger than a lower bound depending only on $ n,m,\mu,R,Q,D $ and the positive parameters $ C , \gamma , t_0 $ comply with the following conditions:
\begin{equation}\label{eq: pme-M-init-lower-c1}
C^{m-1} \le \frac{1}{m(1-\mu)} \left[ \frac{1}{2(n-1)\sqrt{2Q}} \wedge \frac{R_0^{1+\mu}}{4(F+1)} \wedge \frac{(m-1)R_0^{1+\mu}}{[2+(m-1)(F+1)]} \right] ,
\end{equation}
\begin{equation}\label{eq: pme-M-init-lower-c2}
\gamma  \le \left[ \frac{m (1+\mu)(1-\mu) C^{m-1}}{m-1} \right]^{\frac{1-\mu}{1+\mu}}  ,
\end{equation}
\begin{equation}\label{eq: pme-M-init-lower-t_0}
\frac{\gamma^{\frac{1+\mu}{1-\mu}}}{ R_0^{1+\mu}} \, \log t_0 \ge \left[ \frac{4(m-1)R_0^{1+\mu}-4m(1-\mu)[(m-1)(F+1)+1-\mu]\,C^{m-1}}{(3+\mu)(m-1)R_0^{1+\mu}-4m(m-1)(1-\mu)(F+1)\,C^{m-1}} \right]^{\frac{1+\mu}{1-\mu}} \, ,
\end{equation}
where $ F $ is a positive constant depending only on $n,\mu,R,R_0,Q,D$.
\end{lem}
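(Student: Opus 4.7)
The plan is to mirror the proof of Lemma \ref{pro: upper-barrier} piece-by-piece, performing the same split into an outer region $r \ge R_0$ and an inner region $r\in[0,R_0)$, but now verifying the \emph{sub}solution inequality $\ubar_t - (\ubar^m)_{rr} - \frac{(n-1)\psi'}{\psi}(\ubar^m)_r \le 0$ in place of the supersolution one. The closed-form expressions for $\ubar_t$, $(\ubar^m)_r$ and $(\ubar^m)_{rr}$ in the two branches of \eqref{lower-in-lemma} are formally identical to the ones already computed; what changes is the direction of the geometric bound on the radial-Laplacian coefficient $\tfrac{(n-1)\psi'}{\psi}$, which we now need to bound from \emph{above}.

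For the outer region I will replace the lower bound \eqref{eq: choice-R0} by the matching upper estimate $\frac{(n-1)\psi'(r)}{\psi(r)\,r^\mu}-\frac{\mu}{r^{1+\mu}}\le(n-1)\sqrt{2Q}$, which again follows from the asymptotic \eqref{ode-psi-estimate-fond-lemma} of Lemma \ref{lem: ode-comparison-2} once $R_0$ is chosen large enough (depending on $n,m,\mu,R,Q,D$). Inserting this into the subsolution inequality and multiplying by $(t+t_0)^{m/(m-1)}$ yields the reversed analogue of \eqref{barrier1}: one has to bound an expression proportional to $F^{1/(m-1)}$ and one proportional to $F^{(2-m)/(m-1)}$, with $F(r,t):=\gamma[\log(t+t_0)]^{(1-\mu)/(1+\mu)}-r^{1-\mu}$, in the opposite direction. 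Requiring the coefficient of $F^{1/(m-1)}$ to be non-positive produces the first piece of the bound on $C^{m-1}$ in \eqref{eq: pme-M-init-lower-c1}, and evaluating at the worst radius $r=\gamma^{1/(1-\mu)}\,[\log(t+t_0)]^{1/(1+\mu)}$ (the free boundary) produces \eqref{eq: pme-M-init-lower-c2} by simply reversing the sense of \eqref{barrier4}.

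For the inner region I will use the upper bound $\frac{(n-1)\psi'(r)\,r}{\psi(r)}\le F$ on $[0,R_0)$ with $F=F(n,\mu,R,R_0,Q,D)$; its existence follows from the continuity of $\psi'/\psi$ and the initial conditions in \eqref{ode-psi-upper-lemma}. Plugging it into the subsolution inequality and arguing exactly as in the passage \eqref{inner}$\to$\eqref{inner-bis}, but with every inequality reversed, one reads off the remaining two bounds on $C^{m-1}$ in \eqref{eq: pme-M-init-lower-c1}: the first comes from forcing the constant term in $r$ to have the correct sign and the second from compensating the $r^2/R_0^{2+2\mu}$ contribution against the $r^2/R_0^{1+\mu}$ one. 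At this point the only obstruction left is the positive cross-term $\tfrac{\gamma(1-\mu)[\log(t+t_0)]^{-2\mu/(1+\mu)}}{1+\mu}$, which works against the subsolution inequality; evaluating the resulting inequality at the worst point $r=R_0$ and asking that the logarithmic correction be small enough translates, after straightforward algebra, into the explicit lower bound \eqref{eq: pme-M-init-lower-t_0} on $t_0$. The two branches of $\ubar$ match in $C^1$ at $r=R_0$ by the same algebraic check as before, and outside the support of $\ubar$ the inequality is trivial, so the subsolution property holds in the weak sense.

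The main obstacle will be the $t_0$-condition \eqref{eq: pme-M-init-lower-t_0}, which has no counterpart in the supersolution analysis: in Lemma \ref{pro: upper-barrier} the requirement $\log t_0 > R_0^{1+\mu}$ is purely qualitative, used to place oneself in the asymptotic regime, whereas here the ``wrong-sign'' term coming from the time derivative of $\gamma[\log(t+t_0)]^{(1-\mu)/(1+\mu)}$ must be quantitatively dominated in the inner region. Isolating that term and balancing it against the negative contributions produced by the chosen $C$ is what forces the precise form of \eqref{eq: pme-M-init-lower-t_0}; carrying out this balance carefully, while keeping the constants compatible with those already fixed in the outer region, is the most delicate bookkeeping step of the proof.
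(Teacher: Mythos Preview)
Your proposal is correct and follows essentially the same route as the paper's proof: the same outer/inner split, the reversed geometric bounds $\frac{(n-1)\psi'(r)}{\psi(r)\,r^\mu}-\frac{\mu}{r^{1+\mu}}\le(n-1)\sqrt{2Q}$ and $\frac{(n-1)\psi'(r)\,r}{\psi(r)}\le F$, evaluation at the free boundary for the $\gamma$-condition, and the identification of the $t_0$-condition \eqref{eq: pme-M-init-lower-t_0} as the genuinely new ingredient arising from the inner region. The paper additionally parametrizes $C$ via an auxiliary $h\in(0,1]$ (mirroring the $k$ in Lemma~\ref{pro: upper-barrier}) and imposes the mild extra requirement $\log t_0\ge 4/(1+\mu)$ to absorb the $[\log(t+t_0)]^{-2\mu/(1+\mu)}$ term into the leading one, but these are bookkeeping devices that your ``straightforward algebra'' would recover.
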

\begin{proof}
As in Lemma \ref{pro: upper-barrier}, we split the proof in 3 steps.

\smallskip
\noindent $ \bullet $ \textsc{Step 1: a lower barrier for large $r$}. Let $ r \ge R_0 $ and take $R_0$ so large that
\begin{equation}\label{eq: choice-R0-lower}
\frac{(n-1)\psi'(r)}{\psi(r)\,r^\mu} - \frac{\mu}{r^{1+\mu}} \le (n-1)\sqrt{2Q}  \quad \forall r \ge R_0 \, ,
\end{equation}
which is feasible thanks to Lemma \ref{lem: ode-comparison-2} and to the fact that $ \mu > -1 $. In view of \eqref{lower-in-lemma} (note that it is the same expression as \eqref{upper-in-lemma}) and the explicit derivatives computed in the proof of Lemma \ref{pro: upper-barrier}, we infer that a sufficient condition for $ \ubar(r,t) $  to be a subsolution in the region $ [R_0,+\infty) \times \mathbb{R}^+ $ is
\begin{equation}\label{barrier1-lower}
\begin{aligned}
& \left[1- m \, (1-\mu) \, (n-1)\sqrt{2Q} \, C^{m-1} \right] \left(\gamma\left[\log(t+t_0)\right]^{\frac{1-\mu}{1+\mu}}-r^{1-\mu}\right)^{\frac1{m-1}}_+ \\ & - \frac{\gamma\,(1-\mu)\left[\log(t+t_0)\right]^{-\frac{2\mu}{1+\mu}}}{1+\mu} \left(\gamma\left[\log(t+t_0)\right]^{\frac{1-\mu}{1+\mu}}-r^{1-\mu}\right)^{\frac{2-m}{m-1}}_+\\
\ge & - \frac{m \, C^{m-1} \, (1-\mu)^2 \, r^{-2\mu}}{m-1}\left(\gamma\left[\log(t+t_0)\right]^{\frac{1-\mu}{1+\mu}}-r^{1-\mu}\right)^{\frac{2-m}{m-1}}_+ .
\end{aligned}
\end{equation}
Now let us set, for all $ h \in (0,1] $,
\begin{equation}\label{C cond-lower}
C=\left[\frac{h}{2 m \, (1-\mu) \, (n-1)\sqrt{2Q}}\right]^{\frac1{m-1}} ,
\end{equation}
so that \eqref{barrier1-lower} becomes
\begin{equation}\label{barrier1-lower-bis}
\begin{aligned}
& (1 - {h}/{2} ) \left(\gamma\left[\log(t+t_0)\right]^{\frac{1-\mu}{1+\mu}}-r^{1-\mu}\right)^{\frac1{m-1}}_+ \\ & - \frac{\gamma\,(1-\mu)\left[\log(t+t_0)\right]^{-\frac{2\mu}{1+\mu}}}{1+\mu} \left(\gamma\left[\log(t+t_0)\right]^{\frac{1-\mu}{1+\mu}}-r^{1-\mu}\right)^{\frac{2-m}{m-1}}_+\\
\ge & - \frac{h \, (1-\mu) \, r^{-2\mu}}{2(m-1)(n-1)\sqrt{2Q}}\left(\gamma\left[\log(t+t_0)\right]^{\frac{1-\mu}{1+\mu}}-r^{1-\mu}\right)^{\frac{2-m}{m-1}}_+ .
\end{aligned}
\end{equation}
To make sure that the quantity between brackets is not identically zero (i.e.~that our subsolution is not trivial), we assume that
\begin{equation}\label{barrier1-lower-t0}
\log t_0 > \gamma^{-\frac{1+\mu}{1-\mu}} \, R_0^{1+\mu} \, .
\end{equation}
Hence, we can deduce that \eqref{barrier1-lower-bis} is equivalent to
\begin{equation}\label{barrier1-lower-ter}
\begin{aligned}
& \gamma \, (1 - {h}/{2} ) \left[\log(t+t_0)\right]^{\frac{1-\mu}{1+\mu}} - \frac{\gamma\,(1-\mu)}{1+\mu} \left[\log(t+t_0)\right]^{-\frac{2\mu}{1+\mu}} \\
\ge & - \frac{h \, (1-\mu) \, r^{-2\mu}}{2(m-1)(n-1)\sqrt{2Q}} + (1-h/2)\,r^{1-\mu}  
\end{aligned}
\end{equation}
in the region \eqref{barrier-region}. As in the proof of Lemma \ref{pro: upper-barrier}, we can choose $ R_0 $ so large (depending only on $n$, $m$, $ Q $, $ \mu \in (-1,0) $ and independent of $ h \in (0,1] $, $ \mu \in [0,1)$) that the r.h.s.~of \eqref{barrier1-lower-ter} is monotone increasing for $ r \ge R_0 $. It is therefore enough to require that \eqref{barrier1-lower-ter} is satisfied at $ r=\gamma^{1/(1-\mu)}\left[\log(t+t_0)\right]^{1/(1+\mu)} $, which yields
\begin{equation}\label{barrier4-lower}
\gamma \le \left[ \frac{h \, (1+\mu)}{2(m-1)(n-1)\sqrt{2Q}} \right]^{\frac{1-\mu}{1+\mu}} \, .
\end{equation}
We have then shown that $\ubar$ as in \eqref{lower-in-lemma} is a subsolution to \eqref{eq: pme-M} provided \eqref{C cond-lower} holds with $ h \in (0,1] $, $ \gamma $ satisfies \eqref{barrier4-lower}, $ t_0 $ satisfies \eqref{barrier1-lower-t0} and $R_0$ is large enough.

\smallskip
\noindent $ \bullet $ \textsc{Step 2: a lower barrier for small $r$}.
In the region $ r \in [0,R_0) $, by proceeding as in step 2 of the proof of Lemma \ref{pro: upper-barrier} and defining the positive constant $ F=F(n,\mu,R,R_0,Q,D) $ by
\begin{equation}\label{eq: consequence-R0-lower}
\frac{(n-1)\psi'(r) \, r}{\psi(r)} \le F  \quad \forall r \in [0,R_0) \, ,
\end{equation}
we find that, under assumptions \eqref{C cond-lower} and \eqref{barrier1-lower-t0}, in order to ensure that $\ubar$ is a subsolution in the region $ [0,R_0) \times \mathbb{R}^+ $ it is enough to ask
\begin{equation}\label{barrier3-lower}
\begin{aligned}
\left[ 1 - \frac{h\,(F+1)}{2(n-1)\sqrt{2Q} \, R_0^{1+\mu} } \right] \left(\gamma\left[\log(t+t_0)\right]^{\frac{1-\mu}{1+\mu}}-\frac{1-\mu}{2 R_0^{1+\mu}}\,r^2-\!\frac{1+\mu}2 \, R_0^{1-\mu} \right) & \\
- \frac{\gamma\,(1-\mu)\left[\log(t+t_0)\right]^{-\frac{2\mu}{1+\mu}}}{1+\mu} + \frac{h \, (1-\mu)}{2\,(m-1)\,(n-1)\sqrt{2Q} \, R_0^{2+2\mu}} \, r^2 & \ge 0 \, .
\end{aligned}
\end{equation}
Upon requiring
\begin{equation*}
\log t_0 \ge \frac{4}{1+\mu}
\end{equation*}
and
\begin{equation}\label{barrier3-lower-bis-pre}
h \le \frac{(n-1)\sqrt{2Q} \, R_0^{1+\mu}}{2\,(F+1)} \wedge \frac{2(m-1)(n-1)\sqrt{2Q} \, R_0^{1+\mu}}{2+(m-1)\,(F+1)} \, ,
\end{equation}
it is easy to see that \eqref{barrier3-lower} is satisfied provided e.g.
\begin{equation*}\label{barrier3-lower-bis}
\begin{aligned}
& \gamma \left[ 1 - \frac{h\,(F+1)}{2(n-1)\sqrt{2Q} \, R_0^{1+\mu} } - \frac{1-\mu}{4} \right] \left(\log t_0 \right)^{\frac{1-\mu}{1+\mu}} \\
\ge & R_0^{1-\mu} \left[1 - \frac{h\,(F+1)}{2(n-1)\sqrt{2Q} \, R_0^{1+\mu} } - \frac{h \, (1-\mu)}{2\,(m-1)\,(n-1)\sqrt{2Q} \, R_0^{1+\mu}} \right]
\end{aligned}
\end{equation*}
which can be rewritten as
\begin{equation}\label{barrier3-lower-ter}
\log t_0 \ge \frac{R_0^{1+\mu}}{\gamma^{\frac{1+\mu}{1-\mu}}}  \left[ \frac{4(m-1)(n-1)\sqrt{2Q}\,R_0^{1+\mu}-2h\left[(m-1)(F+1)+1-\mu \right]}{(3+\mu)(m-1)(n-1)\sqrt{2Q}\,R_0^{1+\mu}-2h(m-1)(F+1)} \right]^{\frac{1+\mu}{1-\mu}} .
\end{equation}
It is easy to check that the term between brackets in \eqref{barrier3-lower-ter} is larger than $1$ under \eqref{barrier3-lower-bis-pre}, so that \eqref{barrier1-lower-t0} is implied by \eqref{barrier3-lower-ter}.

\smallskip
\noindent $ \bullet $ \textsc{Step 3: the global lower barrier}. Summing up, through steps 1 and 2 we have shown that the function $\ubar$ defined by \eqref{lower-in-lemma} is a subsolution to \eqref{eq: pme-M} provided $ R_0 $ is large enough (with a lower bound depending only on $ n,m,\mu,R,Q,D $) and $ C $, $ \gamma $, $t_0$ satisfy the following conditions:
\begin{equation}\label{C cond-bis-1-lower}
C=\left[\frac{h}{2 m \, (1-\mu) \, (n-1)\sqrt{2Q}}\right]^{\frac1{m-1}}
\end{equation}
with
\begin{equation*}\label{C cond-bis-1-b-lower}
0< h \le 1 \wedge \frac{(n-1)\sqrt{2Q} \, R_0^{1+\mu}}{2\,(F+1)} \wedge \frac{2(m-1)(n-1)\sqrt{2Q} \, R_0^{1+\mu}}{2+(m-1)\,(F+1)} \, ,
\end{equation*}
\begin{equation}\label{C cond-bis-2-lower}
0 < \gamma \le \left[ \frac{h \, (1+\mu)}{2(m-1)(n-1)\sqrt{2Q}} \right]^{\frac{1-\mu}{1+\mu}}
\end{equation}
and
\begin{equation}\label{C cond-bis-3-lower}
\frac{\gamma^{\frac{1+\mu}{1-\mu}}}{ R_0^{1+\mu}} \, \log t_0 \ge \left[ \frac{4(m-1)(n-1)\sqrt{2Q}\,R_0^{1+\mu}-2h\left[(m-1)(F+1)+1-\mu \right]}{(3+\mu)(m-1)(n-1)\sqrt{2Q}\,R_0^{1+\mu}-2h(m-1)(F+1)} \right]^{\frac{1+\mu}{1-\mu}} ,
\end{equation}
whence \eqref{eq: pme-M-init-lower-c1}--\eqref{eq: pme-M-init-lower-t_0} (again, the parameter $ h $ has been introduced for later purpose -- see the proof of Lemma \ref{pro: lower-barrier-2}).
\end{proof}

\begin{proof}[Proof of Theorem \ref{pro: ltb-lower}]
We shall use Lemma \ref{lem: ode-comparison-2} with the choices $ Q=Q_2 $, $\mu=\mu_2$, $R$ the same number appearing in \eqref{hp-ricc} and
\begin{equation}\label{eq: choice-B-curv}
D = \max\left\{ \frac{1}{n-1} \sup_{x \in {B}_{R}(o) } -\mathrm{Ric}_o(x) \, , \, Q_2 \, R^{2\mu_2}  \right\} ,
\end{equation}
so that the corresponding function $ \psi $ complies with (upon recalling Remark \ref{rem: f-K})
\[
\mathrm{Ric}_o(x) \ge - (n-1)\frac{\psi^{\prime\prime}(r)}{\psi(r)} \quad \forall x \in M \setminus \{o\} \, .
\]
By combining Laplacian-comparison results of Section \ref{sect: geom-pre} with Lemma \ref{pro: lower-barrier}, it follows that the function $ x \mapsto \ubar(r(x),t)$, with $ \ubar $ as in \eqref{lower-in-lemma}, is a subsolution to \eqref{pme}.

Let us set $  R_2 =R_0 $. In order to complete the proof, we have to show that the parameters $ C=C_2 $, $ \gamma=\gamma_2 $ and $ t_0=t_2 $, subject to \eqref{eq: pme-M-init-lower-c1}--\eqref{eq: pme-M-init-lower-t_0}, can be chosen in such a way that $ \ubar(r(x),0) \le u_0(x) $ for all $ x \in M $. To this end, first of all we point out that, in view of \eqref{eq: pme-M-init-lower-c1}, condition \eqref{eq: pme-M-init-lower-t_0} is implied by
\begin{equation*}\label{eq: pme-M-init-lower-t_0-bis}
\log t_2 \ge \gamma_2^{-\frac{1+\mu_2}{1-\mu_2}} \, R_2^{1+\mu_2} \, K^{\frac{1+\mu_2}{1-\mu_2}}
\end{equation*}
for some constant $ K> 1 $ that depends only on $ n $, $ m $, $ \mu $, $ R_2 $, $ Q_2 $, $F$ (which in particular can be taken to be independent of $ C_2 $). Then, with no loss of generality, we shall suppose that
\begin{equation}\label{eq: pme-M-init-lower-t_0-ter}
\inf_{x \in B_{\mathcal{R}}(o)} u_0(x) =: \mathcal{L}>0 \, , \quad \mathcal{R}:=K^{\frac{1}{1-\mu_2}} \, R_2 \, .
\end{equation}
Indeed, since \eqref{pme} written in local coordinates is a standard diffusion equation of porous medium type, upon waiting some finite time condition \eqref{eq: pme-M-init-lower-t_0-ter} is certainly satisfied (provided the initial datum is non trivial). Hence, under such assumptions, it is straightforward to verify that the condition $ \ubar(r(x),0) \le u_0(x) $ is finally met provided e.g.
$$
C_2 \le \frac{\mathcal{L}}{ \left( K - \frac{1+\mu_2}{2} \right)^{\frac{1}{m-1}} R_2^{\frac{1-\mu_2}{m-1}}  } \, ,
$$
\eqref{eq: pme-M-init-lower-c1}--\eqref{eq: pme-M-init-lower-c2} hold and one chooses $ t_2 $ as
$$  t_2 = e^{ \gamma_2^{-\frac{1+\mu_2}{1-\mu_2}} \, R_2^{1+\mu_2} \, K^{\frac{1+\mu_2}{1-\mu_2}}} \, . $$
As for the bounds \eqref{bound-smooth-infty-below}, it is enough to note that $ \ubar(\cdot,t) $ is supported precisely in $ B_{\mathsf{R}_1(t)}(o) $ with $ \mathsf{R}_1(t) := \gamma_2^{{1}/{(1-\mu_2)}} \, \left[ \log(t+t_2) \right]^{1/(1+\mu_2)} $ and its maximum is attained at $ r=0 $.
\end{proof}

\section{Proofs in the quasi-hyperbolic, upper-critical ca\-se: Theorems \ref{pro: ltb-cri-upper}, \ref{pro: ltb-cri-lower}} \label{sect: proofs-2}

The strategy we adopt here is very similar to the one we developed in Section \ref{sec.proofs}, namely we first provide supersolutions and subsolutions to \eqref{ode-psi-upper-lemma} with $ \mu=1 $ and then exploit the latter in order to prove Theorems \ref{pro: ltb-cri-upper} and \ref{pro: ltb-cri-lower}. In fact the construction of such barriers is a consequence of the results given by Lemmas \ref{pro: upper-barrier} and \ref{pro: lower-barrier}: it suffices to take limits carefully as $ \mu \to 1^- $.

\begin{lem}[Upper barriers, $\mu=1$]\label{pro: upper-barrier-2}
Let $m>1$. Under the same assumptions and with the same notations as in Lemma \ref{lem: ode-comparison} in the case $ \mu=1 $, consider the function $\Ubar(r,t)$ defined, for all $ t \ge 0 $, by
\begin{equation}\label{upper-in-lemma-critic}
(t+t_0)^{\frac1{m-1}} \, \Ubar(r,t) :=
\begin{cases}
\kappa \left[ \eta + \frac12 \log\log(t+t_0) - \log r  \right]_+^{\frac1{m-1}} & \forall r \ge R_0 \, , \\
\kappa \left[ \eta + \frac12 \log\log(t+t_0) - \frac{r^2}{2R_0^2} - \log R_0 + \frac12 \right]_+^{\frac1{m-1}} & \forall r \in [0,R_0) \, .
\end{cases}
\end{equation}
Then $ \Ubar $ is a \emph{supersolution} to \eqref{eq: pme-M} provided $ R_0 $ is larger than a lower bound depending only on $ n,m,R,Q $ and $ t_0 , \kappa , \eta $ comply with the following conditions:
\begin{equation}\label{eq: pme-M-init-upper-t_0-cri}
t_0 > e^{R_0^2} \, ,
\end{equation}
\begin{equation}\label{eq: pme-M-init-upper-k1}
\kappa \ge \left[\frac{2}{m \, (n-1)\sqrt{Q/2}}\right]^{\frac1{m-1}} \vee \left[\frac{2R_0^2}{m \, (E+1) }\right]^{\frac1{m-1}} \, ,
\end{equation}
\begin{equation}\label{eq: pme-M-init-upper-k2}
\eta \ge \frac{1}{2} \left\{ \log \left( \frac{2m \, \kappa^{m-1}}{m-1} \right) \vee \frac{2m\,\kappa^{m-1}}{(m-1) \left[m\,(E+1)\,\kappa^{m-1} - R_0^{2} \right]} \right\} ,
\end{equation}
where $ E $ is a positive constant depending only on $n,R,R_0,Q$.
\end{lem}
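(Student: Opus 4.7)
The proof will mirror the three-step structure of Lemma \ref{pro: upper-barrier}: an outer barrier for $r \geq R_0$, an inner barrier for $r < R_0$, and the gluing into a global supersolution. As the author notes, the critical barrier \eqref{upper-in-lemma-critic} can also be viewed as the $\mu \to 1^-$ limit of the subcritical barrier \eqref{upper-in-lemma} under the rescaling $C = \kappa(1-\mu)^{-1/(m-1)}$, $\gamma = 1 + (1-\mu)\eta$; this interpretation will be useful as a consistency check on constants, but the verification itself is most transparent via direct computation.

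For the outer region, write $\Ubar = \kappa(t+t_0)^{-1/(m-1)} \Phi_+^{1/(m-1)}$ with $\Phi := \eta + \tfrac{1}{2}\log\log(t+t_0) - \log r$. Direct differentiation, followed by multiplication by $(t+t_0)^{m/(m-1)}$ and (within the support $\Phi > 0$) by $\Phi^{(m-2)/(m-1)}$, reduces the supersolution inequality $\Ubar_t \geq (\Ubar^m)_{rr} + \frac{(n-1)\psi'}{\psi}(\Ubar^m)_r$ to
\begin{equation*}
\left[\frac{m(n-1)\psi'\,\kappa^{m-1}}{r\,\psi} - 1 - \frac{m\kappa^{m-1}}{r^2}\right]\Phi + \frac{1}{2\log(t+t_0)} - \frac{m\kappa^{m-1}}{(m-1)\,r^2} \geq 0.
\end{equation*}
Lemma \ref{lem: ode-comparison} with $\mu=1$ gives $(n-1)\psi'/(\psi r) \geq (n-1)\sqrt{Q/2}$ for $r \geq R_0$ large enough, and setting $\kappa^{m-1} = 2k/[m(n-1)\sqrt{Q/2}]$ with $k \geq 1$ (first branch of \eqref{eq: pme-M-init-upper-k1}), the coefficient of $\Phi$ is bounded below by $2k - 1 - o(1)$ as $R_0 \to \infty$. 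Enlarging $R_0$ further, the left-hand side becomes monotone in $r$ on the support $R_0 \leq r \leq e^\eta[\log(t+t_0)]^{1/2}$, and the worst case, at the free boundary $r = e^\eta[\log(t+t_0)]^{1/2}$, produces $e^{2\eta} \geq 2m\kappa^{m-1}/(m-1)$, i.e., the first branch of \eqref{eq: pme-M-init-upper-k2}. Condition \eqref{eq: pme-M-init-upper-t_0-cri} simply ensures the support is nonempty and the argument applies uniformly for $t \geq 0$.

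For the inner region $0 \leq r < R_0$, the quadratic truncation ensures $C^1$-matching at $r = R_0$ by construction. The supersolution inequality reduces to a polynomial inequality in $r$ whose analysis parallels Step 2 of Lemma \ref{pro: upper-barrier}, using the lower bound \eqref{eq: consequence-R0} with the same $E$ depending only on $n, R, R_0, Q$ (the definition is unchanged in the critical case, since Lemma \ref{lem: ode-comparison} still applies with $\mu=1$). The second branches of \eqref{eq: pme-M-init-upper-k1}--\eqref{eq: pme-M-init-upper-k2} emerge from imposing positivity both at $r = 0$ and at the outer free boundary.

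The main obstacle is the algebraic bookkeeping needed to identify the precise multiplicative constants, especially the factor $\tfrac{1}{2}$ and the argument $2m\kappa^{m-1}/(m-1)$ of the $\log$ in \eqref{eq: pme-M-init-upper-k2}. The limit-based consistency check pins these down cleanly: the first branch of \eqref{eq: pme-M-init-upper-c2}, $\gamma \geq [m(1+\mu)(1-\mu)C^{m-1}/(m-1)]^{(1-\mu)/(1+\mu)}$, under the rescaling above becomes, after taking logs and using $\log(1+(1-\mu)\eta) \sim (1-\mu)\eta$ together with $(1-\mu)/(1+\mu) \to 1/2$, precisely $\eta \geq \tfrac{1}{2}\log[2m\kappa^{m-1}/(m-1)]$ in the limit $\mu \to 1^-$. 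The second branch of \eqref{eq: pme-M-init-upper-k2} arises analogously from the second branch of \eqref{eq: pme-M-init-upper-c2}, while condition \eqref{eq: pme-M-init-upper-t_0-cri} is the direct limit of \eqref{eq: pme-M-init-upper-t_0}.
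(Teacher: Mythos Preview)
Your proposal is correct, but it takes a genuinely different route from the paper. The paper does \emph{not} redo the three-step computation in the critical case: instead, it takes the subcritical barriers $\Ubar_\mu$ from Lemma \ref{pro: upper-barrier} with the rescaled parameters $C=\kappa(1-\mu)^{-1/(m-1)}$, $\gamma=1+(1-\mu)\eta$ (the very rescaling you mention only as a consistency check), verifies that conditions \eqref{C cond-bis-1}--\eqref{C cond-bis-2} are met for $\mu$ close to $1$ whenever $\kappa,\eta$ satisfy the \emph{strict} versions of \eqref{eq: pme-M-init-upper-k1}--\eqref{eq: pme-M-init-upper-k2}, and then passes to the limit $\mu\to 1^-$ directly in the differential inequality \eqref{eq: pme-M-mu}, using that $\psi_\mu\to\psi_1$ locally together with its derivatives. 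A final limit removes the strictness. Thus in the paper the role of the limit is not to ``pin down constants'' but is the entire argument.

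What each approach buys: the paper's limiting argument is short and avoids repeating the algebra of Lemma \ref{pro: upper-barrier}, but it relies on the nontrivial observation (noted in the paper) that $R_0$ and $E$ in \eqref{eq: choice-R0}, \eqref{eq: consequence-R0} can be chosen \emph{uniformly} in $\mu\in(-1,1]$, and on the convergence $\psi_\mu\to\psi_1$. Your direct computation is self-contained and makes the structure of the critical barrier transparent; the separation of variables leading to the free-boundary condition $e^{2\eta}\ge 2m\kappa^{m-1}/(m-1)$ is clean. Your monotonicity step is correct once one separates the inequality as $(2k-1)[\eta+\tfrac12\log\log(t+t_0)]+\tfrac{1}{2\log(t+t_0)}\ge (2k-1)\log r+\tfrac{m\kappa^{m-1}}{(m-1)r^2}$, so that the right-hand side depends on $r$ alone; you might make this explicit.
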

\begin{proof}
Upon recalling \eqref{C cond-bis-1}--\eqref{C cond-bis-2}, let us choose $C$ and $ \gamma $ by
\begin{equation}\label{eq: cond-crit-kappa-eta}
C = \left[\frac{2k}{m \, (1-\mu) \, (n-1)\sqrt{Q/2}}\right]^{\frac1{m-1}} = \frac{\kappa}{(1-\mu)^{\frac{1}{m-1}}} \quad \textrm{and} \quad \gamma = 1 + \eta \, (1-\mu)
\end{equation}
for any $ \kappa $ and $ \eta $ satisfying
\begin{equation}\label{eq: cond-crit-kappa-eta-a}
\kappa > \left[\frac{2}{m \, (n-1)\sqrt{Q/2}}\right]^{\frac1{m-1}} \vee \left[\frac{2R_0^2}{m \, (E+1) }\right]^{\frac1{m-1}}
\end{equation}
and
\begin{equation}\label{eq: cond-crit-kappa-eta-b}
\eta > \frac{1}{2} \left\{ \log \left( \frac{2m \, \kappa^{m-1}}{m-1} \right) \vee \frac{2m\,\kappa^{m-1}}{(m-1) \left[m\,(E+1)\,\kappa^{m-1} - R_0^{2} \right]} \right\} ,
\end{equation}
whereas we suppose that $ t_0 $ satisfies \eqref{eq: pme-M-init-upper-t_0-cri}. The definitions of $ R_0 $ and $E$ above are the same as in \eqref{eq: choice-R0} and \eqref{eq: consequence-R0} (under the additional constraint that the r.h.s.~of \eqref{barrier3} is increasing for $ r \ge R_0 $): in fact it is not difficult to show that they can be taken (large and small enough, respectively) to be independent of $\mu \in (-1,1] $. Note that the r.h.s.~of \eqref{eq: cond-crit-kappa-eta-b} is obtained by taking the first-order approximation of condition \eqref{C cond-bis-2} with respect to $ 1-\mu $, where $C$ is as in \eqref{eq: cond-crit-kappa-eta} subject to \eqref{eq: cond-crit-kappa-eta-a} (the latter, up to the strict inequality, is equivalent to \eqref{C cond-bis-1}). Hence, in view of Lemma \ref{pro: upper-barrier}, the function $ \Ubar=\Ubar_\mu $ defined by \eqref{upper-in-lemma} with the choices \eqref{eq: pme-M-init-upper-t_0-cri} and \eqref{eq: cond-crit-kappa-eta}--\eqref{eq: cond-crit-kappa-eta-b} satisfies
\begin{equation}\label{eq: pme-M-mu}
\left(\Ubar_\mu\right)_{\!t} \ge \left(\Ubar_\mu^m\right)_{\!rr}+\frac{(n-1)\psi'_\mu}{\psi_\mu} \left(\Ubar_\mu^m\right)_{\!r} \quad \textrm{in }  \mathbb{R}^+ \! \times \mathbb{R}^+
\end{equation}
for $ \mu $ close enough to $1$, where, to avoid ambiguity, by $ \psi_\mu $ we denote the function $ \psi $ of Lemma \ref{lem: ode-comparison} corresponding to $\mu$. The statement then follows by passing to the limit in \eqref{upper-in-lemma} and \eqref{eq: pme-M-mu} as $ \mu \to 1^- $, upon observing that $ \psi_\mu $ locally converges to $ \psi_1 $ along with its derivatives. The removal of the strict inequalities in \eqref{eq: cond-crit-kappa-eta-a} and \eqref{eq: cond-crit-kappa-eta-b} is an immediate consequence of another passage to the limit.
\end{proof}

\begin{proof}[Proof of Theorem \ref{pro: ltb-cri-upper}]
Once we have at disposal Lemma \ref{pro: upper-barrier-2}, we can let $ Q=Q_1 $ and proceed as in the proof of Theorem \ref{pro: ltb-upper}. In particular, we set $ R_1=R_0 $, $ t_1=t_0 $ satisfying \eqref{eq: pme-M-init-upper-t_0-cri} and suppose that $ u_0 $ is supported in $ B_{\mathcal{R}}(o) $ for some $ \mathcal{R} \ge R_1 $ and has maximum $ \mathcal{M}>0 $. In order to ensure that $ \Ubar(r(x),0) \ge u_0(x) $ it is enough to require that $ \kappa=\kappa_1 $ and $ \eta=\eta_1 $ fulfil \eqref{eq: cond-crit-kappa-eta-a}, \eqref{eq: cond-crit-kappa-eta-b},
$$ \kappa_1 \ge t_1^{\frac{1}{m-1}} \, \mathcal{M} \quad \textrm{and} \quad \eta_1 \ge 1+\log \mathcal{R} - \frac12 \log\log t_1 \, . $$
As for \eqref{smoothing2}, it suffices to remark that $ \Ubar(\cdot,t) $ is supported precisely in $ B_{\mathsf{R}_1(t)}(o) $ with $ \mathsf{R}_1(t) := e^{\eta_1}\left[ \log(t+t_1) \right]^{1/2} $ and attains its maximum at $ r=0 $.
\end{proof}

\begin{lem}[Lower barriers, $ \mu=1 $]\label{pro: lower-barrier-2}
Let $m>1$. Under the same assumptions and with the same notations as in Lemma \ref{lem: ode-comparison-2} in the case $\mu=1 $, consider the  function $\ubar(r,t)$ defined by \eqref{upper-in-lemma-critic}. Then $ \ubar $ is a \emph{subsolution} to the equation \eqref{eq: pme-M} provided $ R_0 $ is larger than a lower bound depending only on $ n,m,R,Q,D $ and $ \kappa , \eta, t_0 $ comply with the following conditions:
\begin{equation}\label{eq: pme-M-init-lower-c1-2}
0< \kappa \le \left[ \frac{1}{2m \, (n-1)\sqrt{2Q}} \right]^{\frac{1}{m-1}}  \wedge \left[ \frac{R_0^2}{4m \, (F+1)} \right]^{\frac{1}{m-1}}  \wedge \left[ \frac{(m-1)\,R_0^2}{m\left[2+(m-1)(F+1) \right]} \right]^{\frac{1}{m-1}} ,
\end{equation}
\begin{equation}\label{eq: pme-M-init-lower-c2-2}
\eta \le \frac{1}{2} \log \left( \frac{2m\,\kappa^{m-1}}{m-1} \right) ,
\end{equation}
\begin{equation}\label{eq: pme-M-init-lower-c2-3}
\eta + \frac12 \log\log t_0 \ge \log R_0 + \frac{(m-1)\,R_0^2-4m\,\kappa^{m-1} }{4(m-1)\,R_0^2-4m(m-1)\,(F+1)\,\kappa^{m-1}} \, ,
\end{equation}
where $ F $ is a positive constant depending only on $n,R,R_0,Q,D$.
\end{lem}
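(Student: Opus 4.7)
The plan is to mirror the strategy of Lemma \ref{pro: upper-barrier-2}: apply Lemma \ref{pro: lower-barrier} for $\mu \in (-1,1)$ under a carefully chosen reparametrization of $(C,\gamma)$, then pass to the limit $\mu \to 1^-$. Specifically, I would set
\[
C = \frac{\kappa}{(1-\mu)^{\frac{1}{m-1}}}\,, \qquad \gamma = 1 + \eta\,(1-\mu)\,,
\]
so that $C^{m-1}(1-\mu) = \kappa^{m-1}$ and $\gamma \to 1$ while $\gamma^{(1+\mu)/(1-\mu)} \to e^{2\eta}$. A direct Taylor expansion in $s := 1-\mu$ using $r^{s} = 1 + s\log r + O(s^2)$, $[\log(t+t_0)]^{s/(2-s)} = 1 + \frac{s}{2}\log\log(t+t_0) + O(s^2)$ then yields
\[
C \!\left(\gamma [\log(t\!+\!t_0)]^{\frac{1-\mu}{1+\mu}} - r^{1-\mu}\right)^{\!\frac{1}{m-1}}_{\!+} \!\!\longrightarrow \kappa \! \left[ \eta + \tfrac{1}{2}\log\log(t\!+\!t_0) - \log r \right]^{\!\frac{1}{m-1}}_{\!+}
\]
as $\mu \to 1^-$, and analogously for the inner branch, so that $\ubar_\mu$ from \eqref{lower-in-lemma} converges to the function in \eqref{upper-in-lemma-critic} pointwise, and locally uniformly together with $r$- and $t$-derivatives in the interior of the support.

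Next I would check that, under this parametrization, the conditions \eqref{eq: pme-M-init-lower-c1}--\eqref{eq: pme-M-init-lower-t_0} collapse precisely to \eqref{eq: pme-M-init-lower-c1-2}--\eqref{eq: pme-M-init-lower-c2-3} at $\mu = 1$. Condition \eqref{eq: pme-M-init-lower-c1} becomes \eqref{eq: pme-M-init-lower-c1-2} immediately by substituting $C^{m-1} = \kappa^{m-1}/(1-\mu)$ and setting $\mu = 1$ (so $R_0^{1+\mu} \to R_0^2$). Condition \eqref{eq: pme-M-init-lower-c2} rewrites as $\log(1 + \eta(1-\mu)) \le \frac{1-\mu}{1+\mu}\log\!\left[\frac{m(1+\mu)\kappa^{m-1}}{m-1}\right]$, whose first-order expansion in $1-\mu$ is exactly \eqref{eq: pme-M-init-lower-c2-2}. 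The most delicate point is the $t_0$-condition \eqref{eq: pme-M-init-lower-t_0}: after the reparametrization the ratio inside the bracket equals $1$ at $\mu=1$, so one must compute the $O(s)$ correction and combine it with the exponent $(1+\mu)/(1-\mu) = (2-s)/s$ to extract a finite exponential. Taking logs and using $\gamma^{(1+\mu)/(1-\mu)} \to e^{2\eta}$, this produces
\[
2\eta + \log\log t_0 - 2\log R_0 \ge \frac{(m-1)R_0^{2} - 4m\kappa^{m-1}}{2(m-1)R_0^{2} - 2m(m-1)(F+1)\kappa^{m-1}}\,,
\]
which, divided by $2$, is \eqref{eq: pme-M-init-lower-c2-3}.

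To pass to the limit in the differential inequality, I would note—exactly as observed in the proof of Lemma \ref{pro: upper-barrier-2}—that the lower bound on $R_0$ (coming from \eqref{eq: choice-R0-lower} and the monotonicity requirement on the r.h.s.~of \eqref{barrier1-lower-ter}) and the constant $F$ defined in \eqref{eq: consequence-R0-lower} can be chosen uniformly in $\mu \in (-1,1]$ near $1$, because Lemma \ref{lem: ode-comparison-2} ensures that $\psi_\mu \to \psi_1$ locally together with $\psi'_\mu$. Hence $(n-1)\psi'_\mu/\psi_\mu \to (n-1)\psi'_1/\psi_1$ uniformly on compact sets of $(0,\infty)$, and the subsolution inequality $(\ubar_\mu)_t \le (\ubar_\mu^m)_{rr} + \frac{(n-1)\psi'_\mu}{\psi_\mu}(\ubar_\mu^m)_r$ guaranteed by Lemma \ref{pro: lower-barrier} survives the limit. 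Strict inequalities in the parameter conditions can then be relaxed to non-strict ones by a second limiting argument.

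The main obstacle will be the $t_0$-condition: one has to be careful that the numerator and denominator of the bracketed ratio in \eqref{eq: pme-M-init-lower-t_0} have matching leading terms, and to extract the next-order coefficient correctly (where the $R_0^{1+\mu}$, $(3+\mu)$ and $(1-\mu)$ factors all contribute); a minor technical point is verifying that the additional auxiliary condition $\log t_0 \ge 4/(1+\mu)$ used inside the proof of Lemma \ref{pro: lower-barrier} is automatically subsumed by \eqref{eq: pme-M-init-lower-c2-3} for $\mu$ sufficiently close to $1$.
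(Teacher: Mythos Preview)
Your proposal is correct and follows essentially the same route as the paper: reparametrize $(C,\gamma)=(\kappa(1-\mu)^{-1/(m-1)},\,1+\eta(1-\mu))$, verify that the conditions \eqref{eq: pme-M-init-lower-c1}--\eqref{eq: pme-M-init-lower-t_0} reduce to \eqref{eq: pme-M-init-lower-c1-2}--\eqref{eq: pme-M-init-lower-c2-3} to first order in $1-\mu$, use the $\mu$-uniformity of $R_0,F$ and the local convergence of $\psi_\mu$ to pass to the limit, then relax strict to non-strict inequalities. Your treatment is in fact more explicit than the paper's (which simply invokes ``first-order approximation'' without displaying the expansion of the $t_0$ bracket), and your asymptotic computation of that bracket is correct.
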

\begin{proof}
We mimick the proof of Lemma \ref{pro: upper-barrier-2}. That is, based on \eqref{C cond-bis-1-lower}--\eqref{C cond-bis-3-lower}, we let
\begin{equation}\label{eq: cond-crit-kappa-eta-lower}
C = \left[\frac{h}{2 m \, (1-\mu) \, (n-1)\sqrt{2Q}}\right]^{\frac1{m-1}} = \frac{\kappa}{(1-\mu)^{\frac{1}{m-1}}} \, , \quad \gamma = 1 + \eta \, (1-\mu)
\end{equation}
and pick $ \kappa $, $ \eta $, $ t_0 $ satisfying \eqref{eq: pme-M-init-lower-c1-2}--\eqref{eq: pme-M-init-lower-c2-3} with \emph{strict} inequalities. Again, the definitions of $ R_0 $ and $F$ above are the same as in \eqref{eq: choice-R0-lower} and \eqref{eq: consequence-R0-lower} (under the additional constraint that the r.h.s.~of \eqref{barrier1-lower-ter} is increasing for $ r \ge R_0 $), since they can be taken to be independent of $\mu \in (0,1] $. Conditions \eqref{eq: pme-M-init-lower-c2-2}--\eqref{eq: pme-M-init-lower-c2-3} are obtained by means of a first-order approximation of \eqref{C cond-bis-2-lower}--\eqref{C cond-bis-3-lower} with respect to $1-\mu$, where $C$ as in \eqref{eq: cond-crit-kappa-eta-lower}. Lemma \ref{pro: lower-barrier} ensures that $ \ubar=\ubar_\mu $ defined by \eqref{lower-in-lemma} subject to \eqref{eq: pme-M-init-lower-c1-2}--\eqref{eq: pme-M-init-lower-c2-3} satisfies
\begin{equation}\label{eq: pme-M-mu-lower}
\left(\ubar_\mu\right)_{\!t} \le \left(\ubar_\mu^m\right)_{\!rr}+\frac{(n-1)\psi'_\mu}{\psi_\mu} \left(\ubar_\mu^m\right)_{\!r} \quad \textrm{in }  \mathbb{R}^+ \! \times \mathbb{R}^+
\end{equation}
for $ \mu $ close enough to $1$, where by $ \psi_\mu $ we denote the function $ \psi $ of Lemma \ref{lem: ode-comparison-2} corresponding to such $\mu$. One then passes to the limit in \eqref{lower-in-lemma} and \eqref{eq: pme-M-mu-lower} exactly as in the proof of Lemma  \ref{pro: upper-barrier-2}.
\end{proof}

\begin{proof}[Proof of Theorem \ref{pro: ltb-cri-lower}]
In view of Lemma \ref{pro: lower-barrier-2}, we set $ Q=Q_2 $, $ R_2=R_0 $, $ \kappa=\kappa_2 $, $ \eta=\eta_2 $, $ t_0=t_2 $
$$ D = \max\left\{ \frac{1}{n-1} \sup_{x \in {B}_{R}(o) } -\mathrm{Ric}_o(x) \, , \, Q_2 \, R^{2}  \right\} , $$
with $ R $ as in \eqref{hp-ricc-cri}, and argue similarly to the proof of Theorem \ref{pro: ltb-lower}. Namely, first of all we remark that condition \eqref{eq: pme-M-init-lower-c2-3} subject to \eqref{eq: pme-M-init-lower-c1-2} is implied by
\begin{equation*}\label{eq: pme-M-init-lower-t_0-bis-critical}
\eta_2 + \frac12 \log\log t_2 \ge \log R_2 + K
\end{equation*}
for some constant $ K > 0 $ that depends only on $n$, $ m $, $ R_2 $, $Q_2$, $F$. With no loss of generality, we also suppose that \begin{equation*}\label{eq: pme-M-init-lower-t_0-ter-critical}
\inf_{x \in B_{\mathcal{R}}(o)} u_0(x) =: \mathcal{L}>0 \, , \quad \mathcal{R}:=e^K R_2 \, .
\end{equation*}
It is then straightforward to verify that $ \ubar(r(x),0) \le u_0(x) $ holds provided e.g.
$$
\kappa_2 \le \frac{\mathcal{L}}{\left( K + \frac{1}{2} \right)^{\frac{1}{m-1}} } \, ,
$$
\eqref{eq: pme-M-init-lower-c1-2}--\eqref{eq: pme-M-init-lower-c2-2} are satisfied and one picks $ t_2 $ as
$$  t_2 = e^{R_2^2 \, e^{2(K-\eta_2)}} \, . $$
Finally, as concerns the bounds \eqref{smoothing2-low}, note that $ \ubar(\cdot,t) $ is supported precisely in $ B_{\mathsf{R}_1(t)}(o) $ with $ \mathsf{R}_1(t) := e^{\eta_2}\left[ \log(t+t_2) \right]^{1/2} $ and its maximum is attained at $ r=0 $.
\end{proof}

\section{Statement of the results in the quasi-Euclidean range}\label{sec.smr.qEr}
Now we turn to quasi-Euclidean settings. First we deal with subcritical powers, namely powers strictly smaller than $-2$.

\begin{thm}[Upper bounds, quasi-Euclidean subcritical powers]\label{pro: upper-qe1}
Let $ M $, $ \mathrm{K}_\omega $, $u$ and $ \mathsf{R}(t) $ be as in  Theorem \ref{pro: ltb-upper}. Suppose that
\begin{equation}\label{hp-sect-qe1}
\mathrm{K}_\omega(x) \le -Q_1 \, r^{2\mu_1} \quad \forall x \in M \setminus B_R(o)
\end{equation}
for some $ \mu_1 < -1 $ and $ Q_1,R>0 $. Then the following bound holds:
\begin{equation}\label{bound-qe1-2}
u(x,t) \le \frac{C_1}{(t+t_1)^{\frac{n}{2+n(m-1)}}} \left(\gamma_1 - \frac{r^2}{(t+t_1)^{\frac2{2+n(m-1)}}} \right)^{\frac1{m-1}}_+ \quad \forall (x,t) \in M \times \mathbb{R}^+ \, ,
\end{equation}
where $  C_1 , \gamma_1 , t_1 $ are positive constants depending on $ n,m,Q_1,R,u_0 $. In particular, the estimates
\begin{equation}\label{smoothing-qe1}
\|u(t)\|_\infty \le \frac{\widetilde{C}_1}{t^{\frac{n}{2+n(m-1)}}} \quad \textrm{and} \quad \mathsf{R}(t) \le \frac{\widetilde{\gamma}_1}{t^{\frac{1}{2+n(m-1)}}}
\end{equation}
hold for suitable $ \widetilde{C}_1 , \widetilde{\gamma}_1  >0 $ and all $t$ large enough.
\end{thm}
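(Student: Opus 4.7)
The plan is to use the Euclidean Barenblatt--Zel'dovich--Kompaneets (BZK) profile as a global supersolution on $M$ and conclude by the comparison principle recalled in Section \ref{geom}. With $C_1,\gamma_1,t_1>0$ to be chosen, set
\begin{equation*}
\Ubar(x,t) := \frac{C_1}{(t+t_1)^{n/[2+n(m-1)]}} \left( \gamma_1 - \frac{r^2}{(t+t_1)^{2/[2+n(m-1)]}} \right)^{1/(m-1)}_+ \, , \qquad r=\dist(x,o).
\end{equation*}
By the scaling invariance of the Euclidean PME, one can arrange that $\Ubar$, viewed as a radial function on $\mathbb{R}^n$, coincides with a BZK solution, so that $\Ubar_t = (\Ubar^m)_{rr} + \frac{n-1}{r}(\Ubar^m)_r$ holds pointwise on its positivity set.

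The key step is to show that the same $\Ubar$, now regarded as a function on $M$ with $r$ equal to Riemannian distance from the pole, is a supersolution of \eqref{pme}. Decomposing the Laplace--Beltrami operator by \eqref{e1} and subtracting the Euclidean identity above yields the pointwise mismatch
\begin{equation*}
\Ubar_t - \Delta_M \Ubar^m = \left( \frac{n-1}{r} - m(r,\theta) \right) (\Ubar^m)_r \, .
\end{equation*}
Since $\Ubar$ is radially decreasing on its support, $(\Ubar^m)_r \le 0$, and it suffices to verify $m(r,\theta) \ge (n-1)/r$. This is precisely the Laplacian comparison theorem recalled in Section \ref{sect: geom-pre}, specialized to the flat model $\widetilde\psi(r)=r$: on any Cartan--Hadamard manifold $\mathrm{K}_\omega \le 0 = -\widetilde\psi''/\widetilde\psi$, which forces $m(r,\theta) \ge (n-1)\widetilde\psi'/\widetilde\psi = (n-1)/r$ throughout $M\setminus\{o\}$.

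To close the argument, one picks $C_1,\gamma_1,t_1$ so that $\Ubar(\cdot,0) \ge u_0$ pointwise. If $u_0$ is supported in $B_\mathcal{R}(o)$ with $\|u_0\|_\infty = \mathcal{M}$, it suffices that $\gamma_1 \, t_1^{2/[2+n(m-1)]} \ge \mathcal{R}^2$ and $C_1 \gamma_1^{1/(m-1)} \ge \mathcal{M}\, t_1^{n/[2+n(m-1)]}$, which are easily reconciled with the BZK relation between $C_1$ and $\gamma_1$. The comparison principle then gives $u \le \Ubar$, i.e.\ \eqref{bound-qe1-2}, and \eqref{smoothing-qe1} follows by evaluating $\Ubar$ at $r=0$ and at the boundary of its support.

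One conceptual observation is that the sectional-curvature hypothesis \eqref{hp-sect-qe1} and the condition $\mu_1<-1$ play no direct role in this direct argument: the Euclidean BZK upper bound is actually a universal feature of nonpositive curvature, via $m(r,\theta)\ge(n-1)/r$. The restriction $\mu_1<-1$ becomes indispensable only when pairing this upper bound with a matching lower bound to obtain sharpness in the quasi-Euclidean regime. An alternative proof, probably the one followed in Section \ref{sec.proofs-euclid}, would be to transport weighted-PME bounds on $\mathbb{R}^n$ back to $M$ through the radial change of variables of Section \ref{sec: chvar}, producing constants whose explicit dependence on $Q_1,R$ fits the statement; the main obstacle along that route would be the analysis of the weighted PME with a density close to (but not equal to) the Lebesgue measure, rather than the comparison on $M$ itself.
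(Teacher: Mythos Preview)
Your argument is correct and takes a genuinely different, more elementary route than the paper. The paper first builds a model $\psi$ satisfying $\psi''=Q\,r^{2\mu}\psi$ for large $r$, proves via the ODE Lemma \ref{lem: ode-comparison-qe} that $\psi(r)\sim cr$, then passes through the radial change of variables of Section \ref{sec: chvar} to a weighted Euclidean PME with density $\rho(s)$ tending to a positive constant, where a Barenblatt-type profile is shown to be a supersolution (with separate treatment of an inner ball as in step 2 of Lemma \ref{pro: upper-barrier}). Your approach bypasses all of this machinery by comparing directly with the flat model $\widetilde\psi(r)=r$: since $M$ is Cartan--Hadamard, $m(r,\theta)\ge(n-1)/r$ everywhere, and the exact Euclidean Barenblatt profile is automatically a global supersolution on $M$.

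Your observation that the hypothesis \eqref{hp-sect-qe1} and the restriction $\mu_1<-1$ are not used is entirely correct and worth stressing: the Euclidean Barenblatt upper bound \eqref{bound-qe1-2} in fact holds on \emph{every} Cartan--Hadamard manifold. What the range $\mu_1<-1$ buys is sharpness, via the matching lower bound of Theorem \ref{pro: lower-qe1}; for $\mu_1\ge-1$ the paper's finer bounds (Theorems \ref{pro: ltb-upper}, \ref{pro: ltb-cri-upper}, \ref{pro: upper-qe1-cri}) supersede it. The price of your simplification is asymmetry: the companion lower bound genuinely requires $\psi'/\psi\sim 1/r$ on the comparison model, hence the ODE analysis of Lemma \ref{lem: ode-comparison-qe}, and your direct comparison with $\widetilde\psi(r)=r$ does not extend to subsolutions. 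The paper's longer route has the virtue of treating both directions uniformly. One minor quibble: in your parametrisation the BZK constraint fixes $C_1$ (as a function of $n,m$) rather than relating $C_1$ to $\gamma_1$; the two remaining free parameters $\gamma_1,t_1$ still suffice to dominate $u_0$.
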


\begin{thm}[Lower bounds, quasi-Euclidean subcritical powers]\label{pro: lower-qe1}
Let $ M , u $ be as in  Theorem \ref{pro: ltb-upper} and $ \mathrm{Ric}_o , \mathsf{R}(t) $ as in Theorem \ref{pro: ltb-lower}. Suppose that
\begin{equation}\label{hp-sect-qe1-low}
\mathrm{Ric}_o(x) \ge -(n-1)\, Q_2  \, r^{2\mu_2} \quad \forall x \in M \setminus B_R(o)
\end{equation}
for some $ \mu_2 < -1 $ and $ Q_2,R>0 $. Then the following bound holds:
\begin{equation}\label{bound-qe1-2-low}
u(x,t) \ge \frac{C_2}{(t+t_2)^{\frac{n}{2+n(m-1)}}} \left(\gamma_2 - \frac{r^2}{(t+t_2)^{\frac2{2+n(m-1)}}} \right)^{\frac1{m-1}}_+ \quad \forall (x,t) \in M \times \mathbb{R}^+ \, ,
\end{equation}
where $  C_2 , \gamma_2 , t_2 $ are positive constants depending on $ n,m,Q_2,R,u_0 $ and on $ \inf_{x \in B_R(o)}$ $\mathrm{Ric}_o(x) $. In particular, the estimates
\begin{equation}\label{smoothing-qe1-low}
\|u(t)\|_\infty \ge \frac{\widetilde{C}_2}{t^{\frac{n}{2+n(m-1)}}} \quad \textrm{and} \quad \mathsf{R}(t) \ge \frac{\widetilde{\gamma}_2}{t^{\frac{1}{2+n(m-1)}}}
\end{equation}
hold for suitable $ \widetilde{C}_2,\widetilde{\gamma}_2>0 $ and all $t$ large enough.
\end{thm}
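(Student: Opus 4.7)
The plan is to follow closely the strategy of Sections \ref{sec.proofs} and \ref{sect: proofs-2}: use the Ricci lower bound and Laplacian comparison to reduce to a model manifold $M_\psi$, and then construct an explicit radial subsolution of Barenblatt shape on $M_\psi$ whose Laplacian-comparison lift is a subsolution of \eqref{pme} on $M$. First I would produce, in analogy with Lemma \ref{lem: ode-comparison-2}, a function $\psi\in\mathcal{A}$ solving an ODE $\psi''=w\psi$ with $w(r)=Q_2 r^{2\mu_2}$ for $r\ge 2R$ and a suitable regularization on $[0,2R]$ so that $w\ge Q_2 r^{2\mu_2}\chi_{(R,\infty)}+D\chi_{[0,R]}$ with $D$ chosen as in \eqref{eq: choice-B-curv}. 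The decisive new fact, replacing \eqref{ode-psi-estimate-fond-lemma}, is that for $\mu_2<-1$ the potential $Q_2 r^{2\mu_2}$ is integrable at infinity, so that $\psi'(r)\to A\in[1,\infty)$, $\psi(r)\sim A r$, and in fact
\[
\frac{r\psi'(r)}{\psi(r)}=1+O\bigl(r^{2\mu_2+2}\bigr)\qquad\text{as }r\to\infty.
\]
By the Laplacian-comparison results of Section \ref{sect: geom-pre}, $m(r,\theta)\le (n-1)\psi'(r)/\psi(r)$, so any radially decreasing subsolution to \eqref{eq: pme-M} on $M_\psi$ lifts to a subsolution on $M$.

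Next I would mimic Lemma \ref{pro: lower-barrier} with the Barenblatt (ZKB) self-similar ansatz
\[
\ubar(r,t):=\frac{C}{(t+t_0)^{n\beta}}\left(\gamma-\frac{r^2}{(t+t_0)^{2\beta}}\right)_+^{\frac{1}{m-1}},\qquad\beta=\frac{1}{n(m-1)+2},
\]
for $r\ge R_0$, capped off by a quadratic matching on $[0,R_0)$ exactly as in \eqref{lower-in-lemma}. Inserting $\ubar$ into \eqref{eq: pme-M} and collecting the two families of terms (powers $(\gamma-\xi)_+^{1/(m-1)}$ and $(\gamma-\xi)_+^{(2-m)/(m-1)}$, with $\xi=r^2(t+t_0)^{-2\beta}$) gives, after division by $(t+t_0)^{-n\beta-1}$, a condition of the shape
\[
\beta C\bigl[n-A\bigl(1+(n-1)\tfrac{r\psi'(r)}{\psi(r)}\bigr)\bigr](\gamma-\xi)_+^{\frac1{m-1}}+\tfrac{2C\beta(1-A)}{m-1}\,\xi\,(\gamma-\xi)_+^{\frac{2-m}{m-1}}\ge 0,
\]
where $A:=2mC^{m-1}/[(m-1)\beta]$. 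Taking $A$ slightly smaller than $1$ (i.e. $C$ slightly smaller than the exact Euclidean Barenblatt constant) and invoking the quantitative asymptotics $r\psi'/\psi-1=O(r^{2\mu_2+2})\to 0$ from Paragraph 1, one checks that both terms above are nonnegative on the whole support $\{\xi\le\gamma\}$ for $R_0$ large enough, $\gamma$ small enough, and $t_0$ large enough. The matching region $[0,R_0)$ is handled exactly as in Step 2 of Lemma \ref{pro: lower-barrier}: bound $(n-1)r\psi'(r)/\psi(r)\le F$ on $[0,R_0]$ (using $\psi'(0)=1$, $\psi(0)=0$ and continuity) and choose $C,\gamma,t_0$ so that the resulting analogue of \eqref{barrier3-lower} is satisfied.

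To close the proof I would argue as in Theorem \ref{pro: ltb-lower}: by interior parabolic regularity for the PME in local coordinates, after a finite wait $\tau$ we have $u(\cdot,\tau)\ge \mathcal L>0$ on some ball $B_{\mathcal R}(o)$ containing the support of $\ubar(\cdot,0)$. Fix $C_2,\gamma_2,t_2$ small/large enough so that, in addition to the subsolution conditions, $\ubar(r(x),0)\le u(x,\tau)$ pointwise; then the comparison principle yields \eqref{bound-qe1-2-low} (after the trivial time shift), and \eqref{smoothing-qe1-low} follows because $\ubar(\cdot,t)$ attains its maximum at $r=0$ and is supported in $B_{\sqrt{\gamma_2}(t+t_2)^\beta}(o)$. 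The main obstacle, and what distinguishes this case from Sections \ref{sec.proofs}-\ref{sect: proofs-2}, is that the support of $\ubar$ grows polynomially in $t$ rather than logarithmically, so the perturbative control of $r\psi'(r)/\psi(r)-1$ must be strong enough to dominate this growth uniformly on the support; the bound $O(r^{2\mu_2+2})$ with $2\mu_2+2<0$ is exactly what allows the slack generated by the choice $A<1$ to absorb the defect across the entire expanding support.
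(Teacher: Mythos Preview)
Your direct barrier computation contains a sign error that unfortunately is fatal.  With your notation $A:=2mC^{m-1}/[(m-1)\beta]$, the correct identity for the Barenblatt profile $\ubar$ on the model $M_\psi$ is
\[
\Delta_\psi\ubar^m-\ubar_t
= C\beta\,(t+t_0)^{-n\beta-1}\Big\{\big[n-A\big(1+(n-1)\tfrac{r\psi'}{\psi}\big)\big](\gamma-\xi)_+^{\frac1{m-1}}
+\tfrac{2(A-1)}{m-1}\,\xi\,(\gamma-\xi)_+^{\frac{2-m}{m-1}}\Big\},
\]
so the second coefficient is $2(A-1)/(m-1)$, not $2(1-A)/(m-1)$.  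With the correct sign the argument collapses: for $A<1$ the second term is nonpositive and, since it dominates near the free boundary $\xi\to\gamma$ (the ratio of the two terms behaves like $(\gamma-\xi)$), the whole expression becomes negative there; for $A\ge 1$ the first bracket is nonpositive because convexity of $\psi$ with $\psi(0)=0$ forces $r\psi'/\psi\ge 1$, and at $r=R_0$ with $t$ large (so $\xi\to 0$) the second term vanishes while the first is strictly negative.  In other words, the exact Euclidean Barenblatt is a \emph{supersolution} on $M_\psi$, and no perturbation of the constant $C$ turns it into a subsolution on the whole support; the quantitative control $r\psi'/\psi-1=O(r^{2\mu_2+2})$ cannot help here because the obstruction is at the free boundary where the dominant term is purely algebraic in $\xi$.

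The paper avoids this obstruction by a genuinely different route: rather than testing a Barenblatt in the geodesic variable $r$, it applies the radial change of variables of Section~\ref{sec: chvar} to pass to the weighted Euclidean equation \eqref{eq.wpme}.  Lemma~\ref{lem: ode-comparison-qe} shows $\psi(r)\sim cr$, so $\rho(s)$ tends to a positive constant (quasi-Euclidean models III), and in the $s$-variable the Barenblatt profile \emph{is} a genuine subsolution to \eqref{eq.wpme} for a suitable choice of constants.  Pulling this back through $r\mapsto s(r)$ gives a subsolution on $M_\psi$ whose profile is $(\gamma-s(r)^2(t+t_0)^{-2\beta})_+^{1/(m-1)}$; since $s(r)\approx r$ uniformly, this can be bounded below by a Barenblatt in $r$ with smaller constants, yielding \eqref{bound-qe1-2-low}.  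The change of variables is thus not a convenience but the mechanism that corrects exactly the sign defect your direct computation encounters.
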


We finally consider the quasi-Euclidean setting in the critical case where curvatures are bounded by inverse-quadratic powers.

\begin{thm}[Upper bounds, quasi-Euclidean critical power]\label{pro: upper-qe1-cri}
Let $ M $, $ \mathrm{K}_\omega $, $u$ and $ \mathsf{R}(t) $ be as in  Theorem \ref{pro: ltb-upper}. Suppose that
\begin{equation}\label{hp-sect-qe1-cri}
\mathrm{K}_\omega(x) \le -Q_1 \, r^{-2} \quad \forall x \in M \setminus B_R(o)
\end{equation}
for some $ Q_1,R>0 $. Let
\begin{equation}\label{hp-qe1-cri-param}
q := \frac{1+\sqrt{1+4Q_1}}{2} \, ,\quad n_q := 1+q(n-1) \, .
\end{equation}
Then the following bound holds:
\begin{equation}\label{bound-qe1-cri}
u(x,t) \le \frac{C_1}{(t+t_1)^{\frac{n_q}{2+n_q(m-1)}}} \left(\gamma_1 - \frac{r^{2}}{(t+t_1)^{\frac{2}{2+n_q(m-1)}}} \right)^{\frac1{m-1}}_+ \quad \forall (x,t) \in M \times \mathbb{R}^+ \, ,
\end{equation}
where $  C_1 , \gamma_1 , t_1 $ are positive constants depending on $ n,m,Q_1,R,u_0 $. In particular, the estimates
\begin{equation}\label{smoothing-qe1-cri}
\|u(t)\|_\infty \le \frac{\widetilde{C}_1}{t^{\frac{n_q}{2+n_q(m-1)}}} \quad \textrm{and} \quad \mathsf{R}(t) \le \frac{\widetilde{\gamma}_1}{t^{\frac{1}{2+n_q(m-1)}}}
\end{equation}
hold for suitable $ \widetilde{C}_1 , \widetilde{\gamma}_1 >0 $ and all $t$ large enough.
\end{thm}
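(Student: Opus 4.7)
The strategy mirrors the barrier method of Sections \ref{sec.proofs} and \ref{sect: proofs-2}: build an explicit Barenblatt-type supersolution on an appropriate model manifold $M_\psi$, then transfer it to $M$ by Laplacian comparison. The key arithmetic fact is that the critical inverse-square curvature couples with the dimension to produce an effective Euclidean dimension $n_q$, so the barrier we expect is exactly the Zel'dovich--Kompaneets--Barenblatt profile in ``dimension'' $n_q$.

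\emph{Step 1: choice of the model.} I would pick $\psi\in\mathcal A$ of the Type II form from Section \ref{sect: exa-mod}: $\psi(r)=r$ on $[0,\bar r]$ and $\psi(r)=a_1 r^{q}+b$ on $(\bar r,+\infty)$, with $q$ the positive root of $q(q-1)=Q_1$, i.e.\ exactly the exponent in \eqref{hp-qe1-cri-param}. A direct computation gives $-\psi''(r)/\psi(r)\sim -Q_1\, r^{-2}$ as $r\to+\infty$, and the free constants can be chosen so that $-\psi''/\psi\ge -Q_1 r^{-2}$ on $[R_0,+\infty)$ for some large $R_0$. Together with $K_\omega\le0$ globally and \eqref{hp-sect-qe1-cri}, this yields $K_\omega(x)\le -\psi''(r)/\psi(r)$ on all of $M$, so the comparison results of Section \ref{sect: geom-pre} give $m(r,\theta)\ge(n-1)\psi'(r)/\psi(r)$.

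\emph{Step 2: Barenblatt supersolution on the model.} Since $\psi'(r)/\psi(r)\sim q/r$, the drift in the radial PME on $M_\psi$ satisfies $(n-1)\psi'/\psi\sim(n_q-1)/r$ as $r\to+\infty$. I propose the candidate
\begin{equation*}
\bar U(r,t):=\frac{C}{(t+t_0)^{\alpha}}\Bigl(\gamma-\frac{r^2}{(t+t_0)^{2\beta}}\Bigr)_+^{1/(m-1)},\quad \alpha=\tfrac{n_q}{2+n_q(m-1)},\ \beta=\tfrac{1}{2+n_q(m-1)},
\end{equation*}
which is an \emph{exact} solution of the radial PME with drift $(n_q-1)/r$ on Euclidean $\mathbb R^{n_q}$. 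On $M_\psi$ one writes
\begin{equation*}
(\bar U^m)_{rr}+\tfrac{(n-1)\psi'}{\psi}(\bar U^m)_r=\bar U_t+\Bigl[\tfrac{(n-1)\psi'(r)}{\psi(r)}-\tfrac{n_q-1}{r}\Bigr](\bar U^m)_r,
\end{equation*}
and since $(\bar U^m)_r\le0$ on $\{\bar U>0\}$, the supersolution inequality reduces to a one-sided control of the drift error. For $r\ge R_1$ the bracket is $o(1/r)$ and can be absorbed by an appropriate choice of $\gamma,t_0$ (a minor bookkeeping exercise analogous to Step 1 of Lemma \ref{pro: upper-barrier}); for $r\in[0,R_1)$ one smoothly matches with a parabolic correction $\gamma-r^2/(\cdot)^{2\beta}\rightsquigarrow \gamma-\tfrac{r^2}{R_1^2(\cdot)^{2\beta}}-\cdots$ in the fashion of \eqref{upper-in-lemma}.

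\emph{Step 3: transfer to $M$ and initial datum.} Enlarging $C,\gamma$ and $t_0$ we arrange $\bar U(r(x),0)\ge u_0(x)$ for every $x\in M$, which is possible because $u_0$ is bounded and compactly supported. By Laplacian comparison, $x\mapsto\bar U(r(x),t)$ is then a supersolution of \eqref{pme} on $M$, and the standard comparison principle yields \eqref{bound-qe1-cri}. The estimates \eqref{smoothing-qe1-cri} follow by reading the maximum of $\bar U(\cdot,t)$ at $r=0$ and its support radius $\mathsf R_1(t)=\gamma^{1/2}(t+t_0)^\beta$.

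\emph{Main obstacle.} The algebraic verification of the Barenblatt differential identity is routine; the delicate point is the sign and order of the drift correction $(n-1)\psi'/\psi-(n_q-1)/r$, which depends on the sub-leading coefficient $b$ in the Type II model and on the exact matching with the inner region. A perhaps cleaner alternative, suggested by the paper's own commentary, is to use the radial change of variables of Section \ref{sec: chvar} to rewrite the problem as a weighted Euclidean PME with weight asymptotic to $|x|^{n_q-n}$; then $\bar U$ is literally a (weighted) Barenblatt solution, and only the perturbation between the exact and the asymptotic weight needs to be absorbed -- essentially the same geometric content, but with the inner/outer matching reduced to the more familiar algebra of weighted PMEs in $\mathbb R^n$.
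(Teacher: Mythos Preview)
Your alternative at the end---the radial change of variables of Section~\ref{sec: chvar}---is in fact the route the paper takes, not a backup. The argument in Section~\ref{sec.proofs-euclid.n3} is: take $\psi$ from Lemma~\ref{lem: ode-comparison} with $\mu=-1$, observe that for $r>2R$ the equation $\psi''=Q_1 r^{-2}\psi$ has explicit solutions $a_1 r^{q_1}+a_2 r^{q_2}$ (so $\psi\sim a r^{q}$), apply the change of variables to land in a Euclidean weighted PME with $\rho(s)\sim c_1 s^{-p_q}$, invoke the known Barenblatt-type barriers of \cite{RV} in the $s$-variable, and pull back. The inner region and the $n=2$ case are handled separately.

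Your primary route---working directly in $r$ with the exact $n_q$-dimensional Barenblatt---has a genuine sign problem that is not ``minor bookkeeping''. With the Type~II model $\psi(r)=a_1 r^{q}+b$ you necessarily have $b>0$ (from the $C^1$ matching at $\bar r$), and then
\[
\frac{(n-1)\psi'(r)}{\psi(r)}=\frac{n_q-1}{r+b\,a_1^{-1}r^{1-q}}<\frac{n_q-1}{r}\qquad(r>\bar r),
\]
so the bracket in your identity has the \emph{wrong} sign for a supersolution. Since the exact Barenblatt carries no slack, you cannot simply enlarge $C$: doing so makes the coefficient of $\Phi^{1/(m-1)}$ favourable but drives the coefficient of $\xi\,\Phi^{(2-m)/(m-1)}$ negative, and near the free boundary ($\xi\uparrow\gamma$) the inequality fails. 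The analogy with Lemma~\ref{pro: upper-barrier} does not carry over, because those barriers are designed with built-in slack rather than being exact self-similar solutions.

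The direct approach is salvageable, but only if you abandon the Type~II model and engineer $\psi\in\mathcal A$ with $\psi''/\psi\le Q_1 r^{-2}$ \emph{and} $\psi'/\psi\ge q/r$ for large $r$ (e.g.\ $\psi'/\psi=q/r+c\,r^{-\sigma}$ with $\sigma>2q$, $c>0$ works asymptotically). That construction is not in the paper and would need its own justification; the paper sidesteps all of this by passing to the $s$-variable, where the perturbation of the weight $\rho(s)$ away from $c_1 s^{-p_q}$ is handled by the barrier machinery of \cite{RV}.
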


\begin{thm}[Lower bounds, quasi-Euclidean critical power]\label{pro: lower-qe1-cri}
Let $ M , u $ be as in  Theorem \ref{pro: ltb-upper} and $ \mathrm{Ric}_o , \mathsf{R}(t) $ as in Theorem \ref{pro: ltb-lower}. Suppose that
\begin{equation}\label{hp-sect-qe1-low-cri}
\mathrm{Ric}_o(x) \ge -(n-1) \, Q_2  \, r^{-2} \quad \forall x \in M \setminus B_R(o)
\end{equation}
for some $ Q_2,R>0 $. Let $ q $ and $ n_q$ be as in \eqref{hp-qe1-cri-param}, with $ Q_1 $ replaced by $ Q_2 $. Then the following bound holds:
\begin{equation}\label{bound-qe1-2-low-cri}
u(x,t) \ge \frac{C_2}{(t+t_2)^{\frac{n_q}{2+n_q(m-1)}}} \left(\gamma_2 - \frac{r^{2}}{(t+t_1)^{\frac{2}{2+n_q(m-1)}}} \right)^{\frac1{m-1}}_+ \quad \forall (x,t) \in M \times \mathbb{R}^+ \, ,
\end{equation}
where $  C_2 , \gamma_2 , t_2 $ are positive constants depending on $ n,m,Q_2,R,u_0 $ and on $ \inf_{x \in B_R(o)}$ $\mathrm{Ric}_o(x) $. In particular, the estimates
\begin{equation}\label{smoothing-qe1-low-cri}
\|u(t)\|_\infty \ge \frac{\widetilde{C}_2}{t^{\frac{n_q}{2+n_q(m-1)}}} \quad \textrm{and} \quad \mathsf{R}(t) \ge \frac{\widetilde{\gamma}_2}{t^{\frac{1}{2+n_q(m-1)}}}
\end{equation}
hold for suitable $ \widetilde{C}_2 , \widetilde{\gamma}_2 >0 $ and all $t$ large enough.
\end{thm}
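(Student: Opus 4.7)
The plan is to mirror the subsolution strategy of Lemma \ref{pro: lower-barrier} but with a Barenblatt-type profile keyed to the \emph{effective dimension} $n_q=1+q(n-1)$ rather than $n$. First I would use the Ricci comparison theorem from Section \ref{sect: geom-pre}: under \eqref{hp-sect-qe1-low-cri}, it suffices to find $\psi\in\mathcal A$ satisfying
\[
\psi''(r)/\psi(r)\le Q_2\,r^{-2}\quad\text{for }r>R,
\]
so that radial subsolutions on the model $M_\psi$ are subsolutions on $M$ (because any candidate will be radially nonincreasing, so $(\ubar^m)_r\le 0$). Solving $\psi''=w(r)\psi$ with $w(r)=Q_2 r^{-2}$ for $r\ge 2R$ and $w$ interpolated down to a positive constant $D$ on $[0,R]$ (exactly as in Lemma \ref{lem: ode-comparison-2}, but with $\mu=-1$ built in) gives a $C^2$ model with $\psi'(r)/\psi(r)\sim q/r$ at infinity, where $q$ is the larger root of $q(q-1)=Q_2$, i.e.\ the quantity in \eqref{hp-qe1-cri-param}.

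Next I would build the subsolution. Since $(n-1)\psi'/\psi\sim (n_q-1)/r$, the radial PME on $M_\psi$ is, in the far field, a perturbation of the PME on $\mathbb R^{n_q}$, so the natural candidate is the Euclidean Barenblatt profile in dimension $n_q$, namely
\[
\ubar(r,t):=\frac{C}{(t+t_0)^{\alpha}}\Bigl(\gamma-\frac{r^{2}}{(t+t_0)^{2\alpha/n_q}}\Bigr)_+^{\!1/(m-1)},\qquad \alpha:=\frac{n_q}{2+n_q(m-1)},
\]
on $\{r\ge R_0\}$, patched in $[0,R_0)$ with a quadratic-in-$r$ profile that preserves $C^1$ regularity, in the same spirit as \eqref{lower-in-lemma}. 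Plugging $\ubar$ into $u_t-(u^m)_{rr}-(n-1)(\psi'/\psi)(u^m)_r$ and using the exact Barenblatt identity for the $\mathbb{R}^{n_q}$ Laplacian, the residue is controlled by $|(n-1)\psi'(r)/\psi(r)-(n_q-1)/r|\,|(\ubar^m)_r|$, which is $o(1/r)\cdot|(\ubar^m)_r|$; since $(\ubar^m)_r\le 0$, this residue has the \emph{right} sign after choosing $C$ strictly smaller than the Barenblatt constant, so the subsolution inequality holds once $R_0$ is large enough. The inner matching region is then treated as in Steps 2--3 of Lemma \ref{pro: lower-barrier}, using $(n-1)\psi'(r)\,r/\psi(r)\le F$ on $[0,R_0)$. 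Finally, since a bounded, nonnegative, not identically zero solution of \eqref{pme} stays uniformly positive on any fixed ball after a finite waiting time (standard PME regularity plus \eqref{eq: pme-M-init-lower-t_0-ter}-type argument), we can choose $t_2$ large enough to satisfy $\ubar(r(x),0)\le u(x,\tau)$ on $B_{\mathcal R}(o)$ for some $\mathcal R$, then conclude by the comparison principle. The $L^\infty$ and $\mathsf R(t)$ lower bounds in \eqref{smoothing-qe1-low-cri} are immediate from the explicit form of $\ubar$.

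An alternative route, consistent with the outline, is to use the radial change of variables of Section \ref{sec: chvar} to recast the PME on $M_\psi$ as a weighted PME on Euclidean space whose weight, in the present critical case $\mu=-1$ with $\psi\sim r^q$, is purely a power of $|x|$; the lower bound \eqref{bound-qe1-2-low-cri} is then the Euclidean-dimension-$n_q$ Barenblatt lower bound for that weighted problem, transferred back to $M$. The main obstacle I expect is the quantitative control of the error $(n-1)\psi'(r)/\psi(r)-(n_q-1)/r$: although the ODE analysis of Lemma \ref{lem: ode-comparison-2} guarantees the right first-order asymptotics, one must track the next-order term carefully enough to absorb it into the Barenblatt identity with the ``right sign,'' and to keep the matching at $r=R_0$ compatible with the rigid scaling of the self-similar profile—this is the analogue of the delicate choice of $h$, $\gamma$, $t_0$ in \eqref{eq: pme-M-init-lower-c1}--\eqref{eq: pme-M-init-lower-t_0} but now with the self-similar exponent tied to $n_q$ rather than to $\mu$.
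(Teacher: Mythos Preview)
Your proposal is correct in outline, and you even mention the paper's actual route as your ``alternative''. The paper does \emph{not} build the barrier directly on $M_\psi$; it takes the change-of-variables path of Section \ref{sec: chvar} as primary. The reason this is cleaner in the critical case $\mu=-1$ is that the comparison ODE $\psi''=Q_2 r^{-2}\psi$ is \emph{explicitly solvable}: $\psi(r)=a_1 r^{q}+a_2 r^{q_2}$ for $r>2R$ with $q$ as in \eqref{hp-qe1-cri-param} and $q_2<0$, so $\psi\sim a_1 r^{q}$ with no remainder analysis needed. The change of variables then lands on a weighted Euclidean PME with weight $\rho(s)\sim c\,s^{-p_q}$ (a pure power, quasi-Euclidean model I in Section \ref{sect: chvar-exa}), for which the Barenblatt-type profile \eqref{eq: barenblatt-1} is known to be a sub/supersolution for $R_0,t_0$ large and $C_0,\gamma_0$ small/large, by the results of \cite{RV}. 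Pulling back via \eqref{eq: t1-asymp-chvar} and Remark \ref{obs: sub-sup} gives the barriers in \eqref{bound-qe1-2-low-cri} outside a ball; the inner region is handled by direct computation as in Step 2 of Lemma \ref{pro: lower-barrier}, exactly as you say.

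Your direct route would also work, but one point in your argument is phrased misleadingly: the residue $[(n-1)\psi'/\psi-(n_q-1)/r]\,(\ubar^m)_r$ does \emph{not} have a definite sign, since the bracket is only $o(1/r)$ without a sign. What actually makes the subsolution inequality go through is that choosing $C$ strictly below the exact Barenblatt constant creates a quantitative negative gap in $\ubar_t-\Delta_{n_q}(\ubar^m)$, of the same scaling order as the principal terms, which then dominates the $o(1/r)\cdot|(\ubar^m)_r|$ error for $r\ge R_0$ large. That is the content you need, and it is what the paper effectively outsources to \cite{RV} in transformed coordinates. Your approach buys self-containment at the cost of redoing those computations; the paper's buys brevity by citing existing weighted-PME theory.
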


The proofs of all the above results rely on the change of variables that we will discuss in detail in the next Sections \ref{sec: chvar} and \ref{sec.proofs-euclid}.

\begin{rem}\label{rem: quasi-sub}\rm
Note that the bounds in Theorems \ref{pro: upper-qe1}--\ref{pro: lower-qe1} are dimension dependent (but independent of curvature) and coincide with corresponding Euclidean ones, in contrast with the quasi-hyperbolic bounds given in Theorems \ref{pro: ltb-upper}--\ref{pro: ltb-cri-lower}. This is expected when curvature decays to zero fast enough. On the other hand, the curvature decay $-Q\,r^{-2}$ is critical for our goals, in the sense that bounds in Theorems \ref{pro: upper-qe1-cri}--\ref{pro: lower-qe1-cri} depend both on the dimension and on the multiplicative constants $Q$. In this regard, note also that \eqref{bound-qe1-cri} and \eqref{bound-qe1-2-low-cri} can formally be seen as analogues of the corresponding Euclidean bounds in the ``fractional'' dimension $n_q>n$ defined in \eqref{hp-qe1-cri-param}.
Clearly, as $ Q \to 0 $ it follows that $ q \to 1 $ and then $ n_q \to n $. Hence, the above bounds interpolate smoothly between the exponents of the Euclidean and the hyperbolic space, although a logarithmic correction appears in the latter case.
\end{rem}


\section{Radial change of variables into Euclidean diffusion with weight: a general approach}\label{sec: chvar}
In this section,  we transform radial solutions of the porous medium equation on certain model manifolds into radial solutions of the same equation in Euclidean space with a particular weight (or density) by means of a suitable change of variables, which is presented in Section \ref{sect: the-change}.  Such a transformation, which by the way applies to a larger class of diffusion problems, has been introduced by \cite[Section 6]{V}, and was an essential tool in the study of the asymptotics of solutions of the porous medium equation in the special case of \emph{hyperbolic space}. Here, we aim at extending it to a quite general class of \emph{quasi-hyperbolic} (according to the terminology adopted in Section \ref{desc-res}) or \emph{super-hyperbolic} (namely, corresponding to $\mu>1$) Riemannian models of dimension $ n \ge 3 $. This is discussed in detail in Section \ref{sect: dir-pb}. By means of slight modifications to our approach, it is also possible to deal with \emph{quasi-Euclidean} Riemannian models and to cover the 2-dimensional cases: the former are addressed in Section \ref{sect: chvar-exa}, along with a large class of explicit examples, whereas the latter are investigated in Section \ref{sect: 2-dim}. Nevertheless, the assumptions that $n \ge 3$ and that the model is \emph{super-Euclidean} (i.e.~quasi- or super- hyperbolic) allow us to obtain a very general result, see Theorem \ref{thm: chvar-gen} below.

\subsection{The change of variables}\label{sect: the-change}
Let us present the main ideas. We start with a radial solution $u=u(r,t)$ (for all $ t>0 $) of the porous medium equation in some general geometry of the type of a Riemannian model manifold $M$ with metric given by $\mathrm{d}l^2=\mathrm{d}r^2+ [\psi(r)]^2 \, \mathrm{d}\omega^2$ with $r\ge0$,  $\omega\in \mathbb{S}_{n-1}$,
that is
\begin{equation}\label{eq.hpme}
u_t=\Delta_g\!\left(u^m\right) \quad \mathrm{with} \quad \Delta_g v = v^{\prime\prime} + \frac{(n-1)\psi^\prime}{\psi} \, v^\prime
\end{equation}
for all regular radial functions $ v=v(r) $, where $ \psi \in \mathcal{A} $ and the class $ \mathcal{A} $ is defined by \eqref{eq: class-mod}. At this stage we take  dimension $n\ge 3$ for convenience. Through the change of variables $s=s(r)$, we want to transform \eqref{eq.hpme} into an equation for ${\hat u}(s,t)= u(r(s),t)$ of the form
\begin{equation}\label{eq.wpme}
\rho(s) \, \hat{u}_t= \Delta_s \! \left( \hat{u}^m \right) ,
\end{equation}
where now $\Delta_s$ is the Euclidean \emph{radial} Laplacian in dimension $n$ (the radial coordinate being $s=|x|$),
and $ \rho(s) $ is a suitable positive weight. It follows that we need
\begin{equation}\label{eq.diff}
\frac{\mathrm{d}s}{s^{n-1}}=\frac{\mathrm{d}r}{[\psi(r)]^{n-1}} \, .
\end{equation}
By integrating between $r$ and $+\infty$ we find the identity
\begin{equation}\label{eq: chvar-initial}
\frac1{(n-2) \, s^{n-2}} = \int_r^{+\infty}\frac1{[\psi(t)]^{n-1}}\,{\rm d}t \, .
\end{equation}
Hence, an additional assumption we need require on $ \psi $ is
\begin{equation}\label{eq: chvar-hyp-int-1}
\int_1^{+\infty} \frac1{[\psi(t)]^{n-1}}\,{\rm d}t < +\infty \, .
\end{equation}
Recalling that, by assumption,
\begin{equation}\label{eq: ass-psi-zero}
\lim_{r \to 0^+} \psi(r) = 0 \, , \quad \lim_{r \to 0^+} \psi^\prime(r) = 1 \, ,
\end{equation}
it follows that
\begin{equation}\label{eq: ass-psi-zero-S}
\lim_{r \to 0^+} \frac{s(r)}{r} = 1 \, , \quad  \lim_{r \to 0^+} s^\prime(r) = 1\,,
\end{equation}
so that the  change of variables is very uniform at $s=0$. It is then easy to show that
\begin{equation}\label{rho.def}
\rho(s)=\frac{\left[\psi\!\left(r(s)\right) \right]^{2(n-1)}}{s^{2(n-1)}} \quad \forall s \in (0,+\infty) \, .
\end{equation}
In view of \eqref{eq: ass-psi-zero}--\eqref{eq: ass-psi-zero-S} we then deduce that $ \rho \in C^\infty((0,+\infty)) \cap C([0,+\infty)) $ is a positive function such that
\begin{equation}\label{eq: expr-rho-zero-intro}
\lim_{s \to 0^+} \rho(s) = 1 \, , \quad \lim_{s \to 0^+} s \, \rho^\prime(s) = 0 \, .
\end{equation}
For future convenience, analogous to \eqref{eq: class-mod}, let us label the class of functions the weight $ \rho $ belongs to:
\begin{equation}\label{eq: class-wei}
\begin{aligned}
& \mathcal B:=
& \left\{ \rho \in C^\infty((0,+\infty))\cap C([0,+\infty)): \ \rho(0)=1 \, , \ \lim_{s \to 0^+} s \, \rho^\prime(s) = 0 \, , \ \rho>0  \right \} .
\end{aligned}
\end{equation}
We are really interested in the behaviour of $\rho(s)$ for large $s$. It depends on the behaviour of $\psi(r)$ (and of $ \psi^\prime(r) $) as $ r \to +\infty $. We shall discuss such issues in detail in Subsections \ref{sect: dir-pb}--\ref{sect: chvar-exa}. For the moment just note that, by \eqref{eq.diff} and \eqref{rho.def}, the inverse transformation to \eqref{eq: chvar-initial} reads
\begin{equation}\label{eq: int-rho12-int}
r=\int_{0}^s \left[ \rho(\tau) \right]^{\frac{1}{2}} \mathrm{d}\tau \, .
\end{equation}

\subsection{An equivalence result for quasi- and super-hyperbolic Riemannian models}\label{sect: dir-pb}

Let us now focus the precedent ideas on model manifolds that fall within the quasi-hyperbolic or super-hyperbolic framework, thus significantly differing from the Euclidean space. We are able to deal with a class of models more general than the ones considered in Section \ref{sect: exa-mod}, namely we need not curvatures to behave like powers at infinity. Such a class of models will give rise to weights $ \rho(s) $ which turn out to be very small perturbations of $ s^{-2} $ at infinity. We also consider the opposite problem, namely we start from weights that are suitable small perturbations of $ s^{-2} $ and show that the associated manifolds belong precisely to the same class we start from.

Note that calculating $\rho$ given $\psi$ is in principle simple: integration of equation \eqref{eq: chvar-initial} allows us to find the relation between $s$ and $r$ and then formula \eqref{rho.def} yields the expression for $\rho$. The converse transformation proceeds similarly: given $\rho$ we use formula \eqref{eq: chvar-initial} to find $s$ as a function of $r$, and then use again \eqref{rho.def} to get
\begin{equation}\label{rho.def-inv}
\psi(r)=s(r)\left[\rho\!\left(s(r)\right)\right]^{\frac{1}{2(n-1)}} \quad \forall r \in (0,+\infty) \, .
\end{equation}
The nontrivial part is to express $ r $ as a function of $s$ and vice versa: in general one can only obtain asymptotic estimates for such relation, which will have to be plugged into \eqref{rho.def} and \eqref{rho.def-inv} to get in turn asymptotic information on $ \rho(s) $ and $ \psi(r) $, respectively.

\begin{thm}\label{thm: chvar-gen}
Let $ n \ge 3 $. Assume that $ \psi \in \mathcal{A} $ where $\mathcal{A}$ is given in \eqref{eq: class-mod}, with $ \lim_{r \to +\infty} \psi(r)=+\infty $ and
that there exists a positive function $ h \in C^1(\mathbb{R}) $ satisfying
\begin{equation}\label{eq: ass-g-1}
\lim_{\tau \to +\infty} \frac{h^\prime(\tau)}{h(\tau)} = 0
\end{equation}
and
\begin{equation}\label{eq: ass-h-scale}
\lim_{\tau \to +\infty} \frac{h\!\left( \tau + \frac{1}{n-1} \log h(\tau) + \frac{1}{n-1} \log\left(\frac{n-1}{n-2}\right) + o(1) \right)}{h( \tau )} = 1
\end{equation}
such that
\begin{equation}\label{eq: ass-fg}
\lim_{r \to +\infty} \, \frac{ \left(\log \psi (r) \right)^\prime}{h\!\left( \log \psi(r) \right)} = 1 \, .
\end{equation}
Then the weight $ \rho $ related to $ \psi $ by \eqref{eq: chvar-initial} and \eqref{rho.def} belongs to $ \mathcal{B} $ where $\mathcal{B} $ is given in \eqref{eq: class-wei} and obeys the asymptotic laws
\begin{equation}\label{eq: chvar-as-final}
\rho(s) \sim \frac{ \left( \frac{n-2}{n-1} \right)^2}{s^2 \left[ h\!\left( \frac{n-2}{n-1} \, \log s \right) \right]^2 } \quad \textrm{as} \ s \to +\infty
\end{equation}
and
\begin{equation}\label{eq: chvar-as-final-deriv}
\lim_{s \to + \infty} \frac{s \, \rho^\prime(s)}{\rho(s)} = - 2 \, .
\end{equation}

\noindent Conversely, assume that $ \rho \in \mathcal{B} $ satisfies \eqref{eq: chvar-as-final-deriv} and \eqref{eq: chvar-as-final} for some positive function $ h \in C^1(\mathbb{R}) $ complying with \eqref{eq: ass-g-1}, \eqref{eq: ass-h-scale} and
\begin{equation}\label{eq: ass-int-h}
\int_1^{+\infty} \frac{1}{h(\tau)} \, \mathrm{d}\tau = + \infty \, .
\end{equation}
Then the model function $ \psi $ related to $ \rho $ by \eqref{eq: int-rho12-int} and \eqref{rho.def-inv} belongs to $ \mathcal{A} $, satisfies $ \lim_{r \to +\infty} \psi(r)=+\infty $ and obeys to \eqref{eq: ass-fg}.
\end{thm}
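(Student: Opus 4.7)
The proof naturally splits into two directions, and the same asymptotic analysis drives both. For the forward direction, the plan is to first sharpen \eqref{eq: chvar-initial} into an explicit leading-order relation between $s$ and $\psi(r)$. I would change variables in $\int_r^\infty \psi(t)^{-(n-1)}\,\mathrm{d}t$ via $\tau = \log\psi(t)$, noting that hypothesis \eqref{eq: ass-fg} gives $\mathrm{d}t \sim \mathrm{d}\tau/h(\tau)$, and then integrate by parts. Since \eqref{eq: ass-g-1} forces $\log h(\tau) = o(\tau)$, the boundary term dominates and yields
\[
\int_r^\infty \frac{\mathrm{d}t}{\psi(t)^{n-1}} \sim \frac{1}{(n-1)\,h(\log\psi(r))\,\psi(r)^{n-1}},
\]
which combined with \eqref{eq: chvar-initial} produces $s^{n-2}\sim \tfrac{n-1}{n-2}\,h(\log\psi(r))\,\psi(r)^{n-1}$. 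Taking logarithms gives the key identity
\[
\tfrac{n-2}{n-1}\log s = \tau + \tfrac{1}{n-1}\log h(\tau) + \tfrac{1}{n-1}\log\tfrac{n-1}{n-2} + o(1) \quad\text{with}\quad \tau := \log\psi(r),
\]
at which point \eqref{eq: ass-h-scale} applies verbatim and gives $h(\log\psi(r)) \sim h(\tfrac{n-2}{n-1}\log s)$. Substituting both relations into $\rho(s) = \psi(r(s))^{2(n-1)}/s^{2(n-1)}$ yields \eqref{eq: chvar-as-final}. For \eqref{eq: chvar-as-final-deriv}, I would differentiate $\log\rho$, use \eqref{eq.diff} to substitute $r'(s)=\psi^{n-1}/s^{n-1}$, and insert the leading-order asymptotics just obtained to compute
\[
\frac{s\rho'(s)}{\rho(s)} = 2(n-1)\,\frac{\psi'(r)\,\psi(r)^{n-2}}{s^{n-2}} - 2(n-1) \;\longrightarrow\; 2(n-2) - 2(n-1) = -2;
\]
membership $\rho\in\mathcal{B}$ near $s=0$ was already recorded in Section \ref{sect: the-change}.

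For the converse, I would define $r(s) = \int_0^s\rho(\tau)^{1/2}\,\mathrm{d}\tau$ and $\psi(r(s)) = s\,\rho(s)^{1/(2(n-1))}$, and then check the required properties in order. First, $\psi\in\mathcal{A}$ near $r=0$ follows directly from $\rho(0)=1$ and $s\rho'(s)\to 0$. Next, $r(s)\to\infty$ is obtained by plugging \eqref{eq: chvar-as-final} into the integral and changing variables via $v=\tfrac{n-2}{n-1}\log\tau$, after which divergence is exactly \eqref{eq: ass-int-h}; and $\psi(r(s))\to\infty$ follows from the lower bound $s\,\rho(s)^{1/(2(n-1))}\gtrsim s^{(n-2)/(n-1)}\,h(\cdot)^{-1/(n-1)}$ together with $\log h=o(\log s)$. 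Finally, to establish \eqref{eq: ass-fg} I would differentiate $\log\psi(r(s)) = \log s + \tfrac{1}{2(n-1)}\log\rho(s)$ using $\mathrm{d}r/\mathrm{d}s = \rho(s)^{1/2}$ and invoke \eqref{eq: chvar-as-final-deriv} and \eqref{eq: chvar-as-final} to compute
\[
(\log\psi)'(r) = \rho(s)^{-1/2}\!\left[\tfrac{1}{s} + \tfrac{\rho'(s)}{2(n-1)\rho(s)}\right] \sim \rho(s)^{-1/2}\cdot\tfrac{n-2}{(n-1)s} \sim h\!\left(\tfrac{n-2}{n-1}\log s\right).
\]
To swap the argument of $h$ from $\tfrac{n-2}{n-1}\log s$ to $\log\psi(r)$, I expand $\log\psi(r)$ via the definition of $\psi$ and \eqref{eq: chvar-as-final}, obtain the same shift $\sigma=\tau+\tfrac{1}{n-1}\log h(\tau)+\tfrac{1}{n-1}\log\tfrac{n-1}{n-2}+o(1)$ as in the forward direction, and apply \eqref{eq: ass-h-scale} once more.

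The main obstacle, in both directions, is the \emph{self-consistent} substitution of arguments inside $h$. Neither $\tfrac{n-2}{n-1}\log s$ nor $\log\psi(r)$ is \emph{a priori} simpler than the other, and the two differ by exactly $\tfrac{1}{n-1}\log h + O(1)$, which is the regime in which \eqref{eq: ass-h-scale} is non-trivial. The technical work lies in producing the expansion of $\log\psi(r)$ in terms of $\log s$ (or vice versa) with an $o(1)$ error that matches the hypothesis of \eqref{eq: ass-h-scale} literally, so that it can be applied without a bootstrap; the remaining computations — the integration by parts used to estimate $\int_r^\infty \psi^{-(n-1)}$, the chain rule for $(\log\psi)'$, and the various limits — are routine once the argument-of-$h$ matching step is in place.
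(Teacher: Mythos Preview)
Your proposal is correct and follows essentially the same route as the paper's proof: both hinge on the expansion $\tfrac{n-2}{n-1}\log s = \log\psi(r) + \tfrac{1}{n-1}\log h(\log\psi(r)) + \tfrac{1}{n-1}\log\tfrac{n-1}{n-2} + o(1)$ and on invoking \eqref{eq: ass-h-scale} to identify $h(\log\psi(r))$ with $h(\tfrac{n-2}{n-1}\log s)$. The only cosmetic differences are that the paper obtains the tail asymptotic $\int_r^\infty \psi^{-(n-1)}\sim [(n-1)h(\log\psi)\psi^{n-1}]^{-1}$ via L'H\^opital rather than your change of variables plus integration by parts, and that in the converse direction the paper makes explicit that one actually needs the ``inverse'' shift $\tau = \sigma - \tfrac{1}{n-1}\log h(\sigma) - \tfrac{1}{n-1}\log\tfrac{n-1}{n-2} + o(1)$ and records (as a one-line observation) that this is equivalent to \eqref{eq: ass-h-scale}; your phrase ``obtain the same shift'' slightly hides this inversion, but the argument is the same.
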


\begin{proof}  We divide the proof into two steps.

\noindent $\bullet$ {\sc Step 1. Direct problem: from $ \psi $ to $ \rho $}.

\noindent It is convenient to rewrite $ \psi $ as $ \psi(r)=e^{f(r)} $, where $ \log \psi = f \in C^\infty((0,+\infty)) $ satisfies
\begin{equation}\label{eq: ass-f-mid}
\lim_{r \to +\infty} f(r)=+\infty \, , \quad \lim_{r \to 0^+} f(r) = -\infty \, , \quad \lim_{r \to 0^+} f(r) + \log f^\prime(r) = 0
\end{equation}
in view of the assumptions on $ \psi $. Moreover, \eqref{eq: ass-g-1} implies
\begin{equation}\label{eq: ass-g-1-bis}
\lim_{\tau \to +\infty} \frac{\log h(\tau)}{\tau} = 0  \, ;
\end{equation}
in particular, by means of the change of variable $ \tau=f(t) $ we can deduce that
\begin{equation}\label{eq: ass-f-2}
\int_{1}^{+\infty} e^{-(n-1)f(t)} \, \mathrm{d}t < +\infty \, ,
\end{equation}
which ensures the validity of \eqref{eq: chvar-hyp-int-1}. By \eqref{eq: ass-g-1}, \eqref{eq: ass-fg}, \eqref{eq: ass-f-mid}, \eqref{eq: ass-g-1-bis} and \eqref{eq: ass-f-2} we get
\begin{equation}\label{eq: cons-f-1}
\int_{r}^{+\infty} e^{-(n-1)f(t)} \, \mathrm{d}t \sim \frac{e^{-(n-1)f(r)}}{(n-1) \, h\!\left( f(r) \right)} \to 0 \quad \textrm{as } r \to +\infty \, ,
\end{equation}
as follows from L'H\^{o}pital's rule. Upon defining the new space variable $ s \in (0,+\infty) $ through \eqref{eq: chvar-initial}, thanks to \eqref{eq: cons-f-1} there holds
\begin{equation}\label{eq: chvar-as-2}
s \sim \left[ \left( \frac{n-1}{n-2}  \right) h\!\left( f(r) \right) \right]^{\frac{1}{n-2}} e^{ \frac{n-1}{n-2} \, f(r)} \to +\infty \quad \textrm{as } r \to +\infty \, ,
\end{equation}
whence, by taking logarithms,
\begin{equation*}\label{eq: chvar-as-3}
\lim_{r \to +\infty} \left[ \frac{n-2}{n-1} \, \log s - \frac{1}{n-1} \, \log\left( \frac{n-1}{n-2} \right) - \frac{1}{n-1} \log h\!\left( f(r) \right) - f(r) \right] = 0 \, ,
\end{equation*}
that is
\begin{equation}\label{eq: chvar-as-4}
\frac{n-2}{n-1} \, \log s = f(r) + \frac{1}{n-1} \log h\!\left( f(r) \right) + \frac{1}{n-1} \, \log\left( \frac{n-1}{n-2} \right) + o(1) \, .
\end{equation}
Now we want to compute $ \rho(s) $ according to \eqref{rho.def}: as explained above, the fact that $ \rho \in \mathcal{B} $ is a direct consequence of the fact that $ \psi \in \mathcal{A} $. In order to establish the asymptotic behaviour as $ s \to +\infty $, by means of \eqref{eq: ass-h-scale} and \eqref{eq: chvar-as-2} we obtain:
\begin{equation}\label{eq: expr-rho-pre}
\rho(s)=\frac{e^{2(n-1)f(r)}}{s^{2(n-1)}} \sim  \frac{\left( \frac{n-2}{n-1} \right)^2}{s^2 \left[ h\!\left( f(r) \right) \right]^2 } \sim \frac{ \left( \frac{n-2}{n-1} \right)^2}{s^2 \left[ h\!\left( f(r) + \frac{\log h \left( f(r) \right)}{n-1}  + \frac{\log\left( \frac{n-1}{n-2} \right)}{n-1} + o(1) \right) \right]^2 } \, ,
\end{equation}
where for greater readability we have dropped the dependence of $r$ on $s$. Hence, the validity of \eqref{eq: chvar-as-final} is a consequence of \eqref{eq: chvar-as-4}. Furthermore, it is straightforward to show that \eqref{eq: chvar-as-final-deriv} also holds in view of \eqref{eq.diff}, \eqref{eq: ass-fg}, \eqref{eq: chvar-as-2} and \eqref{eq: expr-rho-pre}.

\noindent  {\sc Step 2. Inverse problem: from $ \rho $ to $ \psi $}.

\noindent Upon recalling \eqref{eq: int-rho12-int}, let us set
\begin{equation*}\label{conv-chvar}
r=\int_{0}^s \left[ \rho(\tau) \right]^{\frac{1}{2}} \mathrm{d}\tau =:g(s) \quad \forall s \in [0,+\infty) \, .
\end{equation*}
Note that the assumptions on $\rho$ along with \eqref{eq: ass-int-h} ensure that
$$ \int_{0}^{+\infty} \left[ \rho(\tau) \right]^{\frac{1}{2}} \mathrm{d}\tau = + \infty \, . $$
Hence, thanks to \eqref{rho.def-inv}, we have that $ \psi(r)=e^{f(r)} $ with
\begin{equation}\label{conv-chvar-2}
f(r)= \log g^{-1}(r) + \frac{1}{2(n-1)} \, \log \rho\!\left( g^{-1}(r) \right) \quad \forall r \in (0,+\infty) \, ,
\end{equation}
so that
\begin{equation}\label{conv-chvar-3}
f^\prime(r) = \frac{2(n-1)\, \rho\!\left( g^{-1}(r) \right) + g^{-1}(r) \, \rho^\prime\!\left( g^{-1}(r) \right) }{2(n-1) \, g^{-1}(r) \left[ \rho\!\left( g^{-1}(r) \right) \right]^{\frac{3}{2}} } \quad \forall r \in (0,+\infty) \, .
\end{equation}
Since $ \rho \in \mathcal{B} $, one checks that $ f \in C^\infty((0,+\infty)) $ satisfies \eqref{eq: ass-f-mid}, so that $ \psi \in \mathcal{A} $ and $ \lim_{r \to +\infty} \psi(r) = +\infty $. Moreover, due to \eqref{conv-chvar-2}--\eqref{conv-chvar-3},
\begin{equation}\label{conv-chvar-4}
\begin{aligned}
\lim_{r \to +\infty} \frac{f^\prime(r)}{h\!\left(f(r)\right)} = & \lim_{s\to+\infty} \frac{f^\prime\!\left( g(s) \right)}{h\!\left(f\!\left( g(s) \right)\right)} \\
= & \lim_{s\to+\infty} \frac{ 2(n-1)\, \rho(s) + s \, \rho^\prime( s ) }{2(n-1) \, s \left[ \rho(s) \right]^{\frac{3}{2}} h\!\left( \log s + \frac{1}{2(n-1)} \, \log \rho(s) \right) } \, .
\end{aligned}
\end{equation}
In addition, it is straightforward to check that \eqref{eq: ass-h-scale} is equivalent to
\begin{equation}\label{eq: ass-h-scale-2}
\lim_{\tau \to +\infty} \frac{h\!\left( \tau - \frac{1}{n-1} \log h(\tau) - \frac{1}{n-1} \log\left(\frac{n-1}{n-2}\right) + o(1) \right)}{h( \tau )} = 1 \, .
\end{equation}
As a consequence of \eqref{eq: chvar-as-final} and \eqref{eq: ass-h-scale-2}, we therefore obtain
\begin{equation}\label{conv-chvar-5}
\begin{aligned}
& h \! \left( \log s + \frac{1}{2(n-1)} \, \log \rho(s) \right) \\
= & h \! \left( \log s^{\frac{n-2}{n-1}} - \frac{1}{n-1} \, h\!\left( \log s^{\frac{n-2}{n-1}} \right) -\frac{1}{n-1} \log\left( \frac{n-1}{n-2} \right)+ o(1) \right) \\
\sim & h\!\left( \frac{n-2}{n-1} \, \log s  \right) \sim \frac{n-2}{(n-1) \, s \left[\rho(s)\right]^{\frac{1}{2}} } \, ,
\end{aligned}
\end{equation}
so that from \eqref{eq: chvar-as-final-deriv}, \eqref{conv-chvar-4} and \eqref{conv-chvar-5} we finally deduce that
$$ \lim_{r \to +\infty} \frac{f^\prime(r)}{h\!\left(f(r)\right)} = 1 \, , $$
namely \eqref{eq: ass-fg}.
\end{proof}

\begin{rem}\label{rem: funct-h} \rm
In Theorem \ref{thm: chvar-gen} it is apparent that a major role is played by the function $h$. Even though the latter is not given explicitly in terms of $ f=\log \psi $ and $ f^\prime $, we shall see in Section \ref{sect: chvar-exa} below a certain number of significant examples that fall within our framework where everything is easily computable. In fact by \eqref{eq: ass-fg} we are requiring that, at infinity, the function $f$ satisfies (approximately) the autonomous equation $ f^\prime = h(f) $. As it is clear from the proof, assumption \eqref{eq: ass-g-1} (which to some extent means the model is super-Euclidean) on $ h $ is crucial. Also \eqref{eq: ass-int-h}, which basically means that $ \psi(r) $ can be at most doubly exponential, is essential: it is a compatibility condition between \eqref{eq: ass-fg} and the fact that $ \psi(r) \to +\infty $ as $ r \to +\infty $. On the other hand, the technical assumption \eqref{eq: ass-h-scale} has been used in order to simplify the discussion and give a clear asymptotic profile for $ \rho(s) $. In principle it could be weakened. Finally, note that the hypothesis $ n \ge 3 $ is essential to have \eqref{eq: chvar-initial}.
%
%
\end{rem}

\subsection{Some significant examples and quasi-Euclidean models}\label{sect: chvar-exa}

We first provide some explicit examples of model functions $ \psi(r) $ that satisfy the assumptions of Theorem \ref{thm: chvar-gen}, so as to illustrate what kind of perturbations of the weight $ s^{-2} $ one deals with in a super-Euclidean setting. We refer in part to the terminology of Section \ref{sect: exa-mod}. For the reader's convenience, at the end of this section we provide Table \ref{tab: table-psi-rho} which sums up, in all of our examples, the correspondence between $ \psi(r) $ and $ \rho(s) $.

\noindent $\bullet$ {\bf Quasi- and super-hyperbolic models.} Here we have in mind functions $\psi$ that correspond to models of Type I, i.e.~$\psi(r)=e^{a r^{1+\mu}}$ for large $r$, with $ a>0 $ and $ \mu > -1 $. Then it is readily seen that \eqref{eq: ass-fg} holds with
\begin{equation}\label{eq: exa-1}
h(\tau)=\kappa \, \tau^{\frac{\mu}{1+\mu}} \, , \quad \kappa := a^{\frac{1}{1+\mu}} \left( 1+\mu \right)
\end{equation}
for large $ \tau $. It is easy to see that such a function $ h $ does satisfy \eqref{eq: ass-g-1}, \eqref{eq: ass-h-scale}, \eqref{eq: ass-fg} and \eqref{eq: ass-int-h}. More generally, it is easy to check that the class of functions $ \psi \in \mathcal{A} $ for which \eqref{eq: ass-fg} holds for some $ h(\tau) $ as in \eqref{eq: exa-1} is given by all $ \psi \in \mathcal{A} $ such that
\begin{equation}\label{eq: exa-1-ratio}
\frac{\psi^\prime(r)}{\psi(r)} \sim C \, r^{\mu} \quad \textrm{as } r \to +\infty \, , \quad C:=\frac{\kappa^{1+\mu}}{(1+\mu)^\mu} \, .
\end{equation}
By integrating \eqref{eq: exa-1-ratio} we find that the latter is equivalent to $ \psi(r)=e^{f(r)} $ with
\begin{equation*}\label{eq: exa-1-ratio-2}
f(r) \sim \frac{C}{1+\mu} \, r^{1+\mu} \, , \quad f^\prime(r) \sim C \, r^{\mu} \quad \textrm{as } r \to +\infty \, .
\end{equation*}
Hence, in this case \eqref{eq: chvar-as-final} reads
\begin{equation}\label{eq: exa-1-b}
\rho(s) \sim \frac{ \left( \frac{n-2}{n-1} \right)^{\frac{2}{1+\mu}}}{\kappa^2 \, s^2 \left( \log s \right)^{\frac{2\mu}{1+\mu}} } \quad \textrm{as} \ s \to +\infty \,,
\end{equation}
which is a {\sl log} perturbation of the famous inverse square potential, $\rho(s)=c/s^2$.
As concerns the change of variables $ r=r(s) $ (and back), from \eqref{eq: chvar-as-4} we deduce that
\begin{equation*}\label{eq: exa-1-c}
r \sim c_1 \left( \log s \right)^{\frac{1}{1+\mu}} \quad \textrm{as } s \to + \infty \, , \quad c_1 := \left( \frac{1+\mu}{C} \, \frac{n-2}{n-1} \right)^{\frac{1}{1+\mu}} .
\end{equation*}
Finally let us remark that the case $ \mu = 0 $, where the logarithm in the weight $ \rho(s) $ disappears, corresponds to \emph{hyperbolic} models, including the hyperbolic space itself (where $ \psi(r)=\sinh r $), for which the connection with the weight $ s^{-2} $ has already been deeply investigated in \cite{V}.

\smallskip

\noindent $\bullet$ {\bf Extremal super-hyperbolic models: doubly exponential growth.} As the reader may note, the power of the logarithm in the denominator of \eqref{eq: exa-1-b} is strictly smaller than $2$, such a power being formally attained at $ \mu=+\infty $. Let us verify that the latter is associated with model functions $ \psi \in \mathcal{A} $ with doubly exponential growth. More precisely, let \eqref{eq: ass-fg} be satisfied with
$h(\tau)=C \, \tau$ for some $C>0$ and large $ \tau $ (clearly \eqref{eq: ass-g-1}--\eqref{eq: ass-fg} and \eqref{eq: ass-int-h} are still fulfilled).

It is readily seen that the corresponding model functions are precisely those which can be written as $ \psi(r)=e^{f(r)} $ with $f(r)=e^{\ell(r)}$
for large $r$, for some regular $ \ell $ such that
\begin{equation*}\label{eq: exa-2-b}
\ell(r) \sim C \, r \, , \quad \ell^\prime(r) \sim C \quad \textrm{as } r \to +\infty \, .
\end{equation*}
Hence, the weight $ \rho $ associated with such $ \psi $ through \eqref{eq: chvar-as-final} satisfies
\begin{equation*}\label{eq: exa-2-c}
\rho(s) \sim \frac{1}{C^2 \, s^2 \left( \log s \right)^2} \quad \textrm{as} \ s \to +\infty \, .
\end{equation*}
In this case the change of variables from $ r $ to $ s $ yields
\begin{equation*}\label{eq: exa-1-d}
r \sim  \frac{1}{C} \, \log \log s \quad \textrm{as } s \to +\infty \, .
\end{equation*}

\smallskip

Now we turn to quasi-Euclidean models: namely we want to consider, for large $r$, $ \psi(r) = a \, r^{q} $ for some $ q \ge 1 $ and $ a > 0 $, or small perturbations of this kind of functions (we refer to Types II--IV, according to the terminology adopted in Section \ref{sect: exa-mod}). Note that, with such a choice, condition \eqref{eq: ass-fg} is satisfied with $ h(\tau)=q \, e^{-\tau/q} $. Hence, in this case \eqref{eq: ass-g-1} fails and we can no more apply Theorem \ref{thm: chvar-gen}. Nevertheless, we shall see that the particular structure of these models still allows us to get precise information over the asymptotic behaviour of the corresponding weight $ \rho(s) $.
\smallskip

\noindent $\bullet$ {\bf Quasi-Euclidean models I.}  Let $ \psi \in \mathcal{A} $ fall within the framework of Type II models, namely let it satisfy
\begin{equation}\label{eq: t1-asymp}
\psi(r) \sim a \, r^{q} \quad \textrm{as } r \to +\infty
\end{equation}
for some $ q>1 $ and $ a > 0 $. Set
\begin{equation}\label{eq: def-p}
 p_q := \frac{2(n-1)(q-1)}{(n-1)q-1} \, ;
\end{equation}
note that $ 0<p_q<2 $. Direct computations show that in this case \eqref{eq: chvar-initial} yields
\begin{equation}\label{eq: t1-asymp-chvar}
r \sim c \, s^{\frac{2-p_q}{2}} \quad \textrm{as } s \to +\infty \, , \quad c:= a^{-\frac{n-1}{(n-1)q-1}} \left[ \frac{n-2}{(n-1)q-1} \right]^{\frac{1}{(n-1)q-1}} \, ,
\end{equation}
so that the weight $ \rho(s) $ related to $ \psi(r) $ by \eqref{rho.def} satisfies
\begin{equation}\label{eq: exa-t1-euc}
\rho(s) \sim  c_1 \, s^{-p_q} \quad \textrm{as} \ s \to +\infty \, , \quad c_1 := a^{2(n-1)} \, c^{2(n-1)q} \, .
\end{equation}
There is an intuitive calculation behind these transformations. Indeed, going back to formula \eqref{formula-laplacian-model} (for the radial Laplacian), we see that using $\psi(r)\sim a \, r^{q}$ is more or less equivalent to working in a ``fractional'' Euclidean dimension $n_q$ such that $n_q-1=q(n-1)$ (precisely the one defined by \eqref{hp-qe1-cri-param}), so this does not really produce essentially new geometrical results, at least when dealing with radial solutions. But it must be said that it will allow us to handle the long-time behaviour of the PME on such type of (and more general) manifolds for a wider class of data, and this is very relevant for completeness of our presentation (see Section \ref{sec.proofs-euclid}).

\smallskip

\noindent $\bullet$ {\bf Quasi-Euclidean models II.} A closer approximation to Euclidean space happens in Type III models, where we suppose that
\begin{equation*}\label{eq: t2-asymp}
\psi(r) \sim c \, r \, (\log r)^q \quad \textrm{as } r \to +\infty
\end{equation*}
for some $ q,c>0 $. By using L'H\^{o}pital's rule in \eqref{eq: chvar-initial}, it is not difficult to deduce that
\begin{equation*}\label{eq: t2-asymp-chvar}
r \sim c^{-\frac{n-1}{n-2}} \, s \left( \log s \right)^{-q \, \frac{n-1}{n-2}} \quad \textrm{as } s \to +\infty \, ,
\end{equation*}
whence
\begin{equation}\label{eq: exa-t2-euc}
\rho(s)\sim c^{-\frac{2(n-1)}{n-2}} \, (\log s)^{-\frac{2q(n-1)}{n-2}} \quad \textrm{as } s \to +\infty \, .
\end{equation}
In contrast with case I, this kind of models are \emph{not} equivalent to a Euclidean manifold up to a change of spatial dimension. This is well illustrated by the logarithmic weight appearing in \eqref{eq: exa-t2-euc}, and we are not aware of any available result for the corresponding weighted Euclidean PME.

\smallskip

\noindent $\bullet$ {\bf Quasi-Euclidean models III.} Finally, let us consider functions $ \psi $ that basically correspond to manifolds of Type IV. Namely, let $ \psi \in \mathcal{A} $ satisfy
\begin{equation}\label{eq: t3-asymp}
\psi(r) \sim a r \quad \textrm{as } r \to +\infty
\end{equation}
for some $ a >0 $. It is readily seen that everything works exactly as for quasi-Euclidean models I, up to letting $ q=1 $ and so $p=0$ there. Hence, in this case we have that $ \rho(s) $ tends to a positive constant at infinity:
\begin{equation}\label{eq: t3-asymp-rho}
\lim_{s \to +\infty} \rho(s) = a^{-2 \, \frac{n-1}{n-2}} \, .
\end{equation}
This situation is then in some sense fully Euclidean (i.e.~with the same dimension $n$).
%

\bigskip

\begin{table}[ht]
\renewcommand{\arraystretch}{1.7}
\setlength{\tabcolsep}{7.0pt}
\centering
\begin{tabular}[c]{| l | c | c |}
  \hline			
 {Type of model manifold}  & $ \psi(r) $ & $ \rho(s) \sim $ \\
  \hline
  {Quasi/super-hyperbolic} & $ e^{f(r)} \, , \ \ f(r) \sim r^{1+\mu} \, , \ \ \mu > -1 $  & $ s^{-2} \left( \log s \right)^{-\frac{2\mu}{1+\mu}} $ \\
  {Doubly exponential} & $ e^{e^{\ell(r)}} \, , \ \ \ell(r) \sim  r $  & $ s^{-2} \left( \log s \right)^{-2} $ \\
  {Quasi-Euclidean I} & $ \sim r^q \, , \ \ q>1$ & $  s^{-\frac{2(n-1)(q-1)}{(n-1)q-1}}  $ \\
  {Quasi-Euclidean II} & $ \sim r \left( \log r \right)^q \, , \ \ q > 0 $ & $ (\log s)^{-\frac{2q(n-1)}{n-2}} $  \\
  {Quasi-Euclidean III} & $ \sim r $ & $ \textrm{const.} $  \\
   \hline
\end{tabular}
\caption{Correspondence between the model function $ \psi(r) $ and the weight $ \rho(s) $ after the change of variables of Section \ref{sect: the-change}. The asymptotic symbol $ \sim $ is understood as $ r $ and $ s $ go to $ + \infty $, and wherever it appears it has to be meant up to a multiplicative constant. Moreover, the analogues of the asymptotic assumptions on $ f(r) $ and $ \ell(r) $ must be required on $ f^\prime(r) $ and $ \ell^\prime(r) $ as well (we dropped them here for simplicity).}\label{tab: table-psi-rho}
\end{table}

\newpage

\subsection{The 2-dimensional cases}\label{sect: 2-dim}

When the spatial dimension is $ n=2 $, clearly formula \eqref{eq: chvar-initial} cannot be used for the following reason: integration of \eqref{eq.diff} in this dimension  yields
\begin{equation}\label{eq: chvar-log}
\log s = A + \int_{1}^r \frac{1}{\psi(t)} \, \mathrm{d}t
\end{equation}
for some arbitrary real constant $ A $. However, if $ 1/\psi(r) $ is integrable at infinity, as it happens e.g.~in quasi- and super-hyperbolic models or quasi-Euclidean models I (we refer to Section \ref{sect: chvar-exa}), the variable $s$ as a function of $r$ cannot range in the whole $ (0,+\infty) $ since it is forced by \eqref{eq: chvar-log} to stop at some finite $ s_0>0 $. For the same reasons, the weight $ \rho(s) $ given by \eqref{rho.def} becomes singular at such $s_0$. This is well illustrated in \cite[Section 9]{V}, where the focus is on a function $ \psi $ of hyperbolic type, i.e.~$ \psi(r) \sim \sinh(r) $.

In order to overcome these issues, we propose to give up the request that the spatial dimension associated with $s$ and $r$ be the same: in other words, we can impose
\begin{equation*} \label{eq.diff-2dim}
\frac{\mathrm{d}s}{s^{n_1-1}}=\frac{\mathrm{d}r}{\psi(r)}
\end{equation*}
for some (possibly non-) integer $ n_1 > 2 $, so that \eqref{eq: chvar-log} becomes
\begin{equation}\label{eq: chvar-initial-2dim}
\frac1{(n_1-2) \, s^{n_1-2}} = \int_r^{+\infty}\frac1{\psi(t)}\,{\rm d}t
\end{equation}
and equation \eqref{eq.hpme} in dimension $n=2$ is transformed into \eqref{eq.wpme} in dimension $ n_1 $, with
\begin{equation}\label{rho.def-2dim}
\rho(s)=\frac{\left[\psi\!\left(r(s)\right) \right]^{2}}{s^{2(n_1-1)}} \quad \forall s \in (0,+\infty) \, .
\end{equation}
The drawback now is that \eqref{eq: ass-psi-zero-S} and \eqref{eq: expr-rho-zero-intro} do not hold anymore (whereas they do if $ n_1=n=2 $ provided we ask a bit more on $ \psi(r) $ as $ r \to 0^+ $, see below): this is natural since locally we are turning a Euclidean space of dimension $ n=2 $ into a Euclidean space of dimension $ n_1 > 2 $. In particular, one sees that $ \rho(s) $ \emph{vanishes exponentially} as $ s \to 0^+ $. Nevertheless, this is not a major problem because to our purposes the change of variables \eqref{eq: chvar-initial-2dim} is really important only in the complement of a ball (see Section \ref{sect: sub-qe-proofs}). On the other hand, in the cases where $ 1/\psi(r) $ is \emph{not} integrable at infinity (think of quasi-Euclidean models III for instance), in principle \eqref{eq: chvar-log} still yields a well-defined one-to-one change of variables between $ \mathbb{R}^+ $ and $ \mathbb{R}^+ $.

Let us then check briefly what the situation looks like for some of the explicit models of Section \ref{sect: chvar-exa}, in which we are especially interested having in mind the asymptotics of \eqref{eq.hpme}.

\noindent $\bullet$ {\bf Quasi-Euclidean models I.} In view of \eqref{eq: chvar-initial-2dim}, we have that the (asymptotic) relation between $r$ and $s$ is
\begin{equation}\label{eq: t1-asymp-chvar-2dim}
s \sim c \, r^{\frac{q-1}{n_1-2}} \quad \textrm{as } r \to +\infty \, , \quad c:= \left[ \frac{a(q-1)}{n_1-2} \right]^{\frac{1}{n_1-2}} \, ,
\end{equation}
while \eqref{rho.def-2dim} reads
\begin{equation}\label{rho.def-2dim-t1}
\rho(s) \sim \widetilde{c} \, s^{2 \, \frac{n_1-q-1}{q-1}} \quad \textrm{as } s \to +\infty \, , \quad \widetilde{c} := a^2 \left[ \frac{n_1-2}{a(q-1)} \right]^{\frac{2 q}{q-1}} .
\end{equation}
Note that given $q>1$ one can make the exponent $2[n_1-q-1]/(q-1)$ in the expression for $\rho$ be \it negative\rm. Besides, for all choices of $n_1>2$, such exponent is \it supercritical\rm, namely strictly larger than $-2$.

\medskip

\noindent $\bullet$ {\bf Quasi-Euclidean models III.} In this case we need not change the spatial dimensions and use \eqref{eq: chvar-log}. However, as the latter only provides us with asymptotic information over logarithms of $r$ and $s$, asking \eqref{eq: t3-asymp} is not enough in order to describe precisely the behaviour of $ \rho(s) $. Since we would like it to tend to a positive constant both as $ s \to 0^+ $ and $ s \to +\infty $, it is reasonable to assume that $ \psi \in \mathcal{A} $ and, for some $a>0$,
\begin{equation}\label{eq: t3-asymp-2dim}
\psi(r) \sim a r \quad \textrm{as } r \to +\infty  \quad ; \quad \int_{0}^{1} \left| \frac{1}{\psi(t)} - \frac{1}{t} \right| \mathrm{d}t + \int_{1}^{+\infty} \left| \frac{1}{\psi(t)} - \frac{1}{at} \right| \mathrm{d}t< +\infty \, .
\end{equation}
From \eqref{eq: chvar-log} and \eqref{eq: t3-asymp-2dim} we therefore end up with
\begin{equation*}\label{eq: t3-asymp-chvar-2dim}
s \sim c \, r \quad \textrm{as } r \to +\infty \, , \quad c := e^{A+\int_{1}^{+\infty} \left( \frac{1}{\psi(t)} - \frac{1}{at} \right) \, \mathrm{d}t} \, ,
\end{equation*}
whence $\lim_{s \to + \infty} \rho(s) = c^{-2}$.

It is then straightforward to verify that one can choose $ A $ so that \eqref{eq: ass-psi-zero-S} and \eqref{eq: expr-rho-zero-intro} are fulfilled.

\begin{rem}\label{obs: sub-sup}\rm
Throughout this whole section we have dealt with radial \emph{solutions} to \eqref{eq.hpme} and \eqref{eq.wpme}. However, it is apparent that everything continues to work if we consider \emph{sub-} or \emph{supersolutions} instead: in other words, radial subsolutions and supersolutions to \eqref{eq.hpme} are transformed, by means of the change of variables of Section \ref{sect: the-change}, into radial subsolutions and supersolutions, respectively, to \eqref{eq.wpme}. This is important also in the light of Laplacian comparison results recalled in Section \ref{sect: geom-pre}.
\end{rem}

\section{Proof of the main results in the quasi-Euclidean range}\label{sec.proofs-euclid}

In this section we sketch the proofs of our asymptotic results when the assumptions on curvatures are of quasi-Euclidean type, namely Theorems \ref{pro: upper-qe1}--\ref{pro: lower-qe1-cri}. In such a range it is convenient to exploit the results of Section \ref{sec: chvar} and work on the Euclidean weighted PME \eqref{eq.wpme}. We shall separate the critical and subcritical cases, since some significant differences arise.

\subsection{The lower-critical case $\boldsymbol{\mu=-1}$: Theorems \ref{pro: upper-qe1-cri} and \ref{pro: lower-qe1-cri}}\label{sec.proofs-euclid.n3}
First of all, let $\psi$ be the same model function as Lemma \ref{lem: ode-comparison} or in Lemma \ref{lem: ode-comparison-2}. By definition, $ \psi \in \mathcal{A} $, $ \psi \in C^2([0,+\infty)) $ and it satisfies $ \psi^{\prime\prime}(r) = Q \, r^{-2} \, \psi(r) $ for all $ r > 2R $. In particular, since solutions to the latter equation are explicit, we have that $ \psi(r)=a_1 \, r^{q_1} + a_2 \, r^{q_2}  $ for all $ r>2R $ and some constants $a_1,a_2$, where $ q_1 := (1+\sqrt{1+4Q})/2 > 1 $ and $ q_2 := (1-\sqrt{1+4Q})/2 < 0$ are the solutions of $ q(q-1)=Q $. The positivity of $\psi,\psi^\prime,\psi^{\prime\prime}$ trivially implies $a_1>0$, so that \eqref{eq: t1-asymp} holds with $ q = q_1 $ and $ a=a_1 $. Hence, in the case $n\ge3$, to such a $ \psi(r) $ we can apply the results of Section \ref{sect: chvar-exa} (quasi-Euclidean models I), which associate it with a weight $ \rho(s) $ satisfying \eqref{eq: exa-t1-euc}, the latter falling within the framework of \cite{RV}. In particular, the Barenblatt-type function
\begin{equation}\label{eq: barenblatt-1}
\mathfrak{B}(s,t) := \frac{C_0}{(t+t_0)^{\frac{n_q}{2+n_q(m-1)}}} \left(\gamma_0 - \frac{s^{2-p_q}}{(t+t_0)^{\frac{2}{2+n_q(m-1)}}} \right)^{\frac1{m-1}}_+ \quad \forall (s,t) \in (R_0,+\infty) \times \mathbb{R}^+  \, ,
\end{equation}
where $ n_q $ is defined by \eqref{hp-qe1-cri-param} and $ p_q $ is defined by \eqref{eq: def-p}, is either a subsolution or a supersolution to \eqref{eq.wpme} provided $ t_0 , R_0 $ are large enough and $ C_0,\gamma_0 $ are small or large enough, respectively. Hence, the change of variables \eqref{eq: t1-asymp-chvar}, Remark \ref{obs: sub-sup} and Laplacian-comparison results recalled in Section \ref{sect: geom-pre} easily imply that the barrier functions appearing in \eqref{bound-qe1-cri} and \eqref{bound-qe1-2-low-cri} are indeed supersolutions and subsolutions to the PME in \eqref{pme}, respectively, at least in the complement of a ball. In order to show that they are also inside a ball, one can make direct computations as in Lemmas \ref{pro: upper-barrier} and \ref{pro: lower-barrier} (step 2 of the corresponding proofs).

Finally, in order to manage the case $n=2$, we can resort to the discussion of Section \ref{sect: 2-dim}, in particular to \eqref{eq: t1-asymp-chvar-2dim} and \eqref{rho.def-2dim-t1}, and argue exactly as above: the expression of $\mathfrak{B}$ is the same as \eqref{eq: barenblatt-1} up to choosing $p_q := 2(q+1-n_1)/(q-1) $ instead.

\subsection{The subcritical range $\boldsymbol{\mu<-1}$: Theorems \ref{pro: upper-qe1} and \ref{pro: lower-qe1}} \label{sect: sub-qe-proofs}
If $ \mu<-1 $ the situation is more delicate since the equation for $ \psi $ reads $ \psi^{\prime\prime}=Q \, r^{2\mu} \, \psi $ for large $ r $, and no explicit nontrivial solution is available for it. In order to understand the asymptotics of $ \psi(r) $ as $ r \to +\infty $ in such cases, we therefore need a preliminary technical lemma.
\begin{lem}\label{lem: ode-comparison-qe}
Let $R,Q>0$ and $ \mu < -1 $ be fixed parameters. Let $ \psi $ satisfy
\begin{equation}\label{ode-psi-upper-lemma-sub}
\psi''(r) = w(r) \, \psi(r) \quad \forall r>2R \, , \quad \psi'(2R)>0 \, , \quad \psi(2R)>0  \, ,
\end{equation}
where $ w(r):=Q \, r^{2\mu} $. Then $ \psi \sim c r $ as $ r \to +\infty $, for some $c>0$.
\end{lem}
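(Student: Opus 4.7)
The strategy is to exploit the fact that, when $\mu<-1$, the coefficient $w(r)=Qr^{2\mu}$ is integrable at infinity (and, more importantly, so is $r\,w(r)=Qr^{2\mu+1}$ since $2\mu+1<-1$), which should force $\psi'$ to remain bounded and hence to tend to a finite positive limit.

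First I would establish that $\psi$, $\psi'$ and $\psi''$ are all strictly positive on $[2R,+\infty)$. Indeed, from the initial conditions $\psi(2R)>0$ and $\psi'(2R)>0$, together with $\psi''=w\,\psi$, one obtains positivity on a right neighbourhood of $2R$; then $\psi'$ is increasing where $\psi>0$, so $\psi'\ge \psi'(2R)>0$, which prevents $\psi$ from vanishing and propagates positivity of $\psi''$ throughout $[2R,+\infty)$. In particular $\psi$ is convex and increasing.

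Next I would use the convexity/monotonicity to show that, for all $r\ge 2R$,
\[
\psi(r)=\psi(2R)+\int_{2R}^{r}\psi'(s)\,\mathrm{d}s\le \psi(2R)+(r-2R)\,\psi'(r)\le C\,r\,\psi'(r)
\]
for some $C>0$ and for $r$ large enough (since $r\,\psi'(r)\to+\infty$ and $\psi(2R)$ is fixed). Plugging this into the ODE yields the key differential inequality
\[
\bigl(\log\psi'\bigr)'(r)=\frac{\psi''(r)}{\psi'(r)}=Q\,r^{2\mu}\,\frac{\psi(r)}{\psi'(r)}\le C\,Q\,r^{2\mu+1},
\]
valid for all $r$ sufficiently large. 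Since $2\mu+1<-1$, the right-hand side is integrable on $[r_0,+\infty)$, so integrating from some $r_0$ to $r$ shows that $\log\psi'(r)$ stays bounded, i.e.~$\psi'$ is bounded above. Being monotone increasing and bounded, $\psi'$ then admits a finite limit $c:=\lim_{r\to+\infty}\psi'(r)\in[\psi'(2R),+\infty)$, which is in particular strictly positive.

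From here the conclusion $\psi(r)\sim cr$ follows at once either from L'H\^{o}pital's rule applied to $\psi(r)/r$, or directly by writing $\psi(r)/r=\psi(2R)/r+r^{-1}\int_{2R}^{r}\psi'(s)\,\mathrm{d}s$ and using $\psi'(s)\to c$. The main (and essentially only) delicate point is the convexity-based bound $\psi(r)\le C\,r\,\psi'(r)$ that upgrades $w(r)=Qr^{2\mu}$ (which is not by itself strong enough to directly control $(\log\psi')'$) into the integrable bound $CQ\,r^{2\mu+1}$; once this trick is in place, everything reduces to a one-line integration.
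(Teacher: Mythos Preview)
Your argument is correct and is genuinely different from the paper's. The paper recasts the ODE as an integral equation for $v(r)=\psi(r)/r$ and shows that the associated nonlinear operator is a contraction on $C([R_0,\infty))\cap L^\infty$, the smallness of the contraction constant coming from $\int_{R_0}^\infty \tau^{1+2\mu}\,\mathrm{d}\tau\to 0$ as $R_0\to\infty$; boundedness of $v$ then forces $\psi'$ to have a finite positive limit, and L'H\^{o}pital finishes as you do. Your route is more elementary: you never invoke a fixed-point theorem, but instead use convexity to obtain the pointwise bound $\psi(r)\le C\,r\,\psi'(r)$ and turn the equation into the integrable differential inequality $(\log\psi')'\le CQ\,r^{2\mu+1}$. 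Both proofs ultimately exploit the same integrability $2\mu+1<-1$, but yours reaches the conclusion with less machinery; the paper's contraction argument, on the other hand, is somewhat more flexible if one wanted sharper asymptotic expansions for $\psi(r)-cr$.
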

\begin{proof}
We proceed by means of a fixed point argument. To this end, let $R_0>2R$ to be chosen later. By integrating twice \eqref{ode-psi-upper-lemma-sub} between $R_0$ and $r>R_0$, we get:
\begin{equation}\label{eq: fixed-pt-1}
\psi(r)=\psi(R_0)+\psi^\prime(R_0)(r-R_0)+\int_{R_0}^r (r-\tau)w(\tau)\psi(\tau) \, \rd \tau \, .
\end{equation}
Let $v(r):=\psi(r)/r$, so that $\psi(R_0)=R_0\,v(R_0)$, $\psi^\prime(R_0)=R_0\,v^\prime(R_0)+v(R_0)$ and \eqref{eq: fixed-pt-1} reads
\[
v(r)=a+b\,(r-R_0)\,\frac{R_0}r+ \frac1r \int_{R_0}^r \tau (r-\tau) w(\tau) v(\tau) \, \rd \tau =:(T_{a,b}\,v)(r) \, ,
\]
where we have set $v(R_0)=:a$ and $v^\prime(R_0)=:b$. Consider now the Banach space $X:=C([R_0,+\infty))\cap \LL^\infty((R_0,+\infty)) $ endowed with the $ \LL^\infty $ norm. Clearly $T_{a,b}\,v \in X$ for all $v\in X$ since $g(r):=a+b(r-R_0){R_0}/r$ is bounded and the structure of $w$ implies that, for all $ r > R_0 $,
\begin{equation}\label{eq: fix-pt-est}
\begin{aligned}
& \frac1r \left| \int_{R_0}^r \tau (r-\tau) w(\tau) v(\tau) \, \rd \tau \right| \le \|v\|_{\LL^\infty((R_0,+\infty))} \, \frac{Q}{r} \,  \int_{R_0}^r\tau^{1+2\mu}\,(r-\tau) \, \rd \tau \\
\le & \|v\|_{\LL^\infty((R_0,+\infty))} \, Q \, \int_{R_0}^r \tau^{1+2\mu} \, \rd \tau \le - Q \, \frac{R_0^{2+2\mu}}{2+2\mu} \, \|v\|_{\LL^\infty((R_0,+\infty))} \, .
\end{aligned}
\end{equation}
We claim that $T_{a,b}$ is a contraction on $X$ provided one picks $ R_0 $ large enough. In fact
$$ T_{a,b}\,v_1-T_{a,b}\,v_2 = \frac1r \int_{R_0}^r \tau (r-\tau) w(\tau) \left[ v_1(\tau)-v_2(\tau) \right] \rd \tau =:S\,(v_1-v_2) \quad \forall v_1 , v_2 \in X \, , $$
and estimate \eqref{eq: fix-pt-est} implies that $S$ is a bounded linear operator on $X$ with norm at most $ -Q \, {R_0^{2+2\mu}}(2+2\mu) $.
Since $ \mu < -1 $, by choosing $R_0$ large enough we can make such norm as small as needed (independently of $a,b$), so that $T_{a,b}$ is indeed a contraction on $X$ with a fixed point $ \overline{v} $. By construction, $ \phi(r):=r\,\overline{v}(r) $ satisfies $ \phi^{\prime\prime}=w \, \phi $ in $ (R_0,+\infty) $ and we can choose $a,b$ so that $ \phi(R_0)=\psi(R_0) $ and $ \phi^\prime(R_0)=\psi^\prime(R_0) $. By uniqueness, it follows that $ \phi=\psi $. Convexity of $\psi$ implies that $ \psi^\prime $ is increasing, so it has a limit as $r\to+\infty$, and such limit has to be finite (and nonzero) by the above fixed point argument. Hence $\psi(r)\to+\infty$ as $r\to+\infty$. L'H\^opital's rule yields the claim.
\end{proof}

In the case $ n \ge 3 $ we can now reason exactly as in Section \ref{sec.proofs-euclid.n3}: the model functions $ \psi $ of Lemmas \ref{lem: ode-comparison}--\ref{lem: ode-comparison-2} clearly fulfil the assumptions of Lemma \ref{lem: ode-comparison-qe}, thus falling within the class labelled as quasi-Euclidean models III in Section \ref{sect: chvar-exa}, and therefore being associated with a density $ \rho(s) $ that satisfies \eqref{eq: t3-asymp-rho}. Hence, equation \eqref{eq.wpme} is purely Euclidean, so that the barrier functions appearing in \eqref{bound-qe1-2} and \eqref{bound-qe1-2-low} are classical Barenblatt profiles.

In the case $ n = 2 $ we can argue similarly: in order to apply the results of Section \ref{sect: 2-dim} we only have to prove, in addition, that the function $ \psi $ of Lemma \ref{lem: ode-comparison-qe} satisfies the integral estimates in \eqref{eq: t3-asymp-2dim} with $ a=c $. To this end, first of all note that $ \int_{0}^{1} \left| {1}/{\psi(t)} - {1}/{t} \right| \mathrm{d}t $ is finite as a trivial consequence of the fact that $ \psi \in \mathcal{A} \cap C^2([0,1)) $. As for finiteness of $ \int_{1}^{+\infty} \left| {1}/{\psi(t)} - {1}/{c t} \right| \mathrm{d}t $, it is enough to show that $ \psi(r) = cr + A(r) $ with $A(r)=O(r^{1-\epsilon}) $ as $ r \to +\infty $, for some $ \epsilon \in (0,1) $. Lemma \ref{lem: ode-comparison-qe} ensures that $ A(r)=o(r) $; moreover, it satisfies (for large $r$) the equation $A''=w\,(cr+A)$, so that $A$ is eventually convex and therefore $A'$ has a limit as $r\to+\infty$. The fact that $A(r)=o(r)$ forces such a limit to be zero: in particular, we can write $A'(r)=-\int_{r}^{+\infty} A''(\tau)\,{\rm d}\tau$. Since $A''(\tau) \sim Q \, c \,\tau^{2\mu+1}$ as $\tau\to+\infty$ and $\mu<-1$, it is clear that there exists $\epsilon \in (0,1)$ such that $A(r)=O(r^{1-\epsilon})$, and this concludes the proof.
\section{Asymptotic behaviour of a weighted Euclidean porous medium equation} \label{sect: chvar-asymp}

By combining the results of Sections \ref{sec.smr} and \ref{sec: chvar} (we refer in particular to Sections \ref{sect: dir-pb}--\ref{sect: chvar-exa}), we can provide asymptotic information over the behaviour of solutions, corresponding to compactly supported data, to the following \emph{weighted porous medium equation}:
\begin{equation}\label{eq.wpme-2}
\begin{cases}
\rho(x) u_t= \Delta \! \left( {u}^m \right) & \textrm{in } \mathbb{R}^n \times \mathbb{R}^+ \, , \\
u(\cdot,0)=u_0  & \textrm{in } \mathbb{R}^n  \, ,
\end{cases}
\end{equation}
provided $ \rho $ is a positive weight having suitable decay properties at infinity.

\begin{thm}\label{thm: peso}
Let $ n \ge 2 $ and $ u $ be the solution of the weighted porous medium equation \eqref{eq.wpme-2} corresponding to a nonnegative, bounded and compactly supported initial datum $u_0 \not\equiv 0 $. Suppose that the positive weight $ \rho \in \LL^\infty_{\rm loc}(\mathbb{R}^n) $, with $ \rho^{-1} \in \LL^\infty_{\rm loc}(\mathbb{R}^n) $, satisfies
\begin{equation}\label{eq: cond-weight-infty}
\frac{1}{C \, |x|^2 \left( \log |x| \right)^{\nu} } \, \le \rho(x) \le \frac{C}{|x|^2 \left( \log |x| \right)^{\nu}} \,  \quad \forall x \in B_2^c
\end{equation}
for some $ C>0 $ and $ \nu \in (-\infty,1) $. Then the following bound holds:
\begin{equation}\label{bound-peso-1}
\begin{aligned}
&\frac{C_0\,\left[\log(t+t_0)\right]^{\frac{1-\nu}{m-1}}}{(t+t_0)^{\frac1{m-1}}} \left[\gamma_0 - \left(\frac{\log|x|}{\log(t+t_0)}\right)^{\! 1-\nu}\right]_+^{\frac1{m-1}} \le\,
u(x,t) \le \\
&\frac{C_1\,\left[\log(t+t_0)\right]^{\frac{1-\nu}{m-1}}}{(t+t_0)^{\frac1{m-1}}} \left[\gamma_1 - \left(\frac{\log|x|}{\log(t+t_0)}\right)^{\! 1-\nu}\right]_+^{\frac1{m-1}} \end{aligned}
\end{equation}
$$ \forall x \in B_2^c \, , \quad \forall t \ge 0 \, , $$
where $ C_0 $, $ C_1 $, $ \gamma_0 $, $ \gamma_1 $, $ t_0 $ are positive constants depending on $ n,m,\nu,C,u_0 $.
\end{thm}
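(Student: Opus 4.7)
The strategy is to reduce \eqref{eq.wpme-2} to the geometric PME \eqref{pme} on a Riemannian model via the radial change of variables of Section~\ref{sec: chvar}, and then invoke Theorems~\ref{pro: ltb-upper} and \ref{pro: ltb-lower}. Setting $\mu := \nu/(2-\nu)$, one has $\mu \in (-1,1)$ since $\nu<1$, together with the algebraic identity $(1-\mu)/(1+\mu) = 1-\nu$. By the discussion of Quasi- and super-hyperbolic models in Section~\ref{sect: chvar-exa}, the profile $\rho(x)\approx |x|^{-2}(\log|x|)^{-\nu}$ assumed in \eqref{eq: cond-weight-infty} is, up to multiplicative constants, the Euclidean counterpart of a model with $\psi(r) \sim A\, e^{a\, r^{1+\mu}}$, i.e., a quasi-hyperbolic subcritical model to which Theorems~\ref{pro: ltb-upper} and \ref{pro: ltb-lower} apply.

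Accordingly, I would first choose two such model functions $\psi_{\pm}\in\mathcal{A}$ with growth constants $a_{\pm},A_{\pm}$ tuned so that the induced weights sandwich the given one, $\rho_{\psi_-}(|x|) \le \rho(x) \le \rho_{\psi_+}(|x|)$ in $B_2^c$. Transplanting the explicit supersolution $\overline U$ of Lemma~\ref{pro: upper-barrier} (relative to $\psi_+$) and the subsolution $\underline U$ of Lemma~\ref{pro: lower-barrier} (relative to $\psi_-$) to Euclidean space through the change of variables, and using $r(s)\sim c_\pm(\log s)^{1/(1+\mu)}$ together with $r^{1-\mu} \sim c_\pm^{1-\mu}(\log s)^{1-\nu}$, yields functions of the shape
\[
\overline V(x,t)=\frac{C_1}{(t+t_0)^{1/(m-1)}}\bigl[\gamma_1 (\log(t+t_0))^{1-\nu}-(\log|x|)^{1-\nu}\bigr]_+^{1/(m-1)}
\]
in the outer region $\{|x|\ge R_0\}$, extended to the inner region via the quadratic-cutoff device of Step~2 of the same proofs. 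After absorbing the constants $c_\pm^{1-\mu}$ into $\gamma_1$ and $C_1$, the barriers $\overline V$ and the analogous $\underline V$ take exactly the form of the two sides of \eqref{bound-peso-1}.

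To pass from the ``ideal'' weights $\rho_{\psi_\pm}$ to the actual $\rho$, I would exploit the sign of $\overline V_t$: in the bulk of the support, $\overline V_t<0$, and the bound $\rho\le \rho_{\psi_+}$ yields $\rho\,\overline V_t \ge \rho_{\psi_+}\,\overline V_t \ge \Delta\overline V^m$; in the region near the free boundary where $\overline V_t>0$, the bound $\rho\ge \rho_{\psi_-}$, combined with the additional requirement that $\overline V$ also be a supersolution of the $\rho_{\psi_-}$-equation, gives the same conclusion. A symmetric argument applies to $\underline V$. One then concludes by the standard comparison principle for \eqref{eq.wpme-2}, after choosing the barrier constants so that $\overline V(x,0)\ge u_0(x)\ge \underline V(x,0)$ on $\mathbb R^n$; only $\rho,\rho^{-1}\in L^\infty_{\rm loc}$ is needed to treat the inner quadratic-cutoff region.

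The main technical obstacle is precisely the case distinction above: since $\overline V_t$ changes sign inside the support, a single choice of constants $C_1,\gamma_1,t_0$ must make $\overline V$ a supersolution of both the $\rho_{\psi_+}$- and the $\rho_{\psi_-}$-equations simultaneously. This is achieved by selecting the constants in the intersection of the admissible regions of Lemma~\ref{pro: upper-barrier} applied to both models, which is non-empty because $\rho_{\psi_\pm}$ differ only by multiplicative constants (namely the $C$ appearing in \eqref{eq: cond-weight-infty}); the relevant conditions reduce to the same $\gamma_1$-inequality as~\eqref{barrier4}, with an enlarged but still feasible range of $C_1$. A minor additional issue is the case $n=2$, where the change of variables of Section~\ref{sect: the-change} degenerates; this is handled via the dimension-raising variant of Section~\ref{sect: 2-dim}, which reduces the problem to the higher-dimensional case already treated.
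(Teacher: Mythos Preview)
Your overall plan---build explicit radial barriers of the shape in \eqref{bound-peso-1}, handle the non-radial two-sided bound on $\rho$ by splitting according to the sign of $\overline V_t$, and conclude by comparison---is exactly the right skeleton, and the sign-of-$\overline V_t$ device you describe is indeed what is needed to pass from the pinching \eqref{eq: cond-weight-infty} to a genuine barrier for $\rho$.

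Where your argument and the paper's diverge is in how the barriers are \emph{obtained}. You try to produce $\overline V$ by transporting the supersolution $\overline U$ of Lemma~\ref{pro: upper-barrier} through the change of variables of Section~\ref{sec: chvar}, and then replacing the exact inverse $r(s)$ by its asymptotic approximation $c\,(\log s)^{1/(1+\mu)}$. This step is a real gap: the exact transported function is $\overline U(r(s),t)$, and substituting $r(s)\rightsquigarrow c\,(\log s)^{1/(1+\mu)}$ changes the function by terms that are lower order in value but not negligible in the differential inequality (the cancellation in \eqref{barrier1} is tight, so perturbing the profile by $o(1)$ multiplicative errors can destroy the supersolution property). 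The same problem recurs in your ``intersection of admissible regions'' argument: Lemma~\ref{pro: upper-barrier} is stated in the $r$-variable, and the two models $\psi_\pm$ have \emph{different} changes of variables $r_\pm(s)$, so a single function $\overline V(s,t)$ pulls back to two different profiles, only one of which is exactly of the form covered by that lemma.

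The paper sidesteps this entirely. It uses the change of variables only heuristically, to guess the correct shape of the barrier (this is the content of Section~\ref{sect: chvar-exa} and the identity $(1-\mu)/(1+\mu)=1-\nu$ you noted). The actual proof is a direct computation on the Euclidean side: one takes the explicit functions in \eqref{bound-peso-1}, computes $\overline V_t$ and $\Delta(\overline V^m)$ in the variable $s=|x|$, and verifies the differential inequality $\rho\,\overline V_t\ge\Delta(\overline V^m)$ using only the two-sided bound \eqref{eq: cond-weight-infty} (invoking the upper bound where $\overline V_t<0$ and the lower bound where $\overline V_t>0$, exactly as you propose). This calculation is entirely parallel to the proofs of Lemmas~\ref{pro: upper-barrier} and \ref{pro: lower-barrier}, but carried out for the weighted Euclidean operator rather than the model Laplacian; no actual change of variables, and hence no asymptotic replacement, is needed. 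Your inner quadratic-cutoff and the $n=2$ remark are handled the same way in both approaches.
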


\begin{thm}\label{thm: peso-critico}
Let the same assumptions as in Theorem \ref{thm: peso} be satisfied, with \eqref{eq: cond-weight-infty} replaced by
\begin{equation}\label{eq: cond-2-infty-critical}
\frac{1}{C \, |x|^2 \, \log |x| } \, \le \rho(x) \le \frac{C}{|x|^2 \, \log |x|} \,  \quad \forall x \in B_2^c
\end{equation}
for some $ C>0 $. Then the following bound holds:
\begin{equation}\label{bound-peso-1-critical}
\begin{aligned}
&\frac{C_0}{(t+t_0)^{\frac1{m-1}}} \left[ -\eta_0  - \log \left( \frac{\log |x|}{\log(t+t_0)} \right) \right]_+^{\frac1{m-1}}
\le  u(x,t) \le
\\ &\frac{C_1}{(t+t_0)^{\frac1{m-1}}} \left[ \eta_1  - \log \left( \frac{\log |x|}{\log(t+t_0)} \right)  \right]_+^{\frac1{m-1}}
\end{aligned}
\end{equation}
$$ \forall x \in B_2^c \, , \quad \forall t \ge 0 \, , $$
where $ C_0 $, $ C_1 $, $ \eta_0 $, $ \eta_1 $, $ t_0 $ are positive constants depending on $ n,m,C,u_0 $.
\end{thm}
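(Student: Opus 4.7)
The plan is to derive Theorem \ref{thm: peso-critico} from the critical quasi-hyperbolic estimates of Theorems \ref{pro: ltb-cri-upper} and \ref{pro: ltb-cri-lower} via the change of variables of Section \ref{sec: chvar}. According to Table \ref{tab: table-psi-rho}, the decay rate $\rho(x)\approx |x|^{-2}(\log|x|)^{-1}$ (i.e.\ $\nu=1$) is precisely the weight produced, under the change of variables of Section \ref{sect: the-change}, by the quasi-hyperbolic Type I model with $\mu=1$, namely $\psi(r)\sim e^{a r^2}$: formula \eqref{eq: exa-1-b} with $\mu=1$ reduces to $\rho_\psi(s)\sim (n-2)/[4a(n-1)\,s^2\log s]$, and the change of coordinates $r\sim c\,(\log s)^{1/2}$ from \eqref{eq: chvar-as-4} (with $h(\tau)=\kappa\sqrt{\tau}$) turns the $\log r$ appearing in the manifold barriers \eqref{bound-22} and \eqref{bound-2} into $\tfrac12\log\log|x|$. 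Absorbing the factor $\tfrac12$ into the outer multiplicative constant, this predicts barriers of exactly the shape appearing on the two sides of \eqref{bound-peso-1-critical}.

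Guided by this heuristic, I would first fix two explicit model functions $\psi_\pm\in\mathcal{A}$ equal to $e^{a_\pm r^2}$ for large $r$ and smoothly matched to $r$ near the origin (in the Type I style of Section \ref{sect: exa-mod}), with $a_+$ large and $a_-$ small enough that the associated weights satisfy $\rho_{\psi_+}(|x|)\le \rho(x)\le \rho_{\psi_-}(|x|)$ for $|x|\ge 2$. Both manifolds satisfy $\mathrm{K}_\omega\sim -4a_\pm^2 r^2$ (and the same for $\mathrm{Ric}_o$ up to the factor $n-1$), so Theorems \ref{pro: ltb-cri-upper} and \ref{pro: ltb-cri-lower} apply on them, providing super- and sub-solutions of the manifold PME of the form \eqref{bound-22}--\eqref{bound-2}. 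Remark \ref{obs: sub-sup} and the change of variables then turn these into radial super- and sub-solutions of the \emph{weighted} Euclidean equations with weights $\rho_{\psi_-}$ and $\rho_{\psi_+}$, respectively, which after the asymptotic $\log r\leftrightarrow\tfrac12\log\log |x|$ have exactly the form on the two sides of \eqref{bound-peso-1-critical}.

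The main obstacle is that these candidates are super/sub-solutions only of the weighted equations with the specific model weights $\rho_{\psi_\pm}$, not of the equation with the actual $\rho$: since the time derivative of a Barenblatt-type profile changes sign between the interior and the free boundary, no direct monotonicity-in-weight argument turns them into barriers for the given $\rho$. To bridge this gap I would instead carry out a direct calculation, showing that any radial profile of the form on the right-hand side of \eqref{bound-peso-1-critical} is a supersolution of $\rho(x)v_t=\Delta v^m$ whenever $\rho$ satisfies only the upper bound of \eqref{eq: cond-2-infty-critical}, and symmetrically for the lower bound and the subsolution. Expanding $\rho\bar v_t-\Delta\bar v^m$ and factoring $H^{1/(m-1)}$ and $H^{(2-m)/(m-1)}$, with $H$ the bracket in $\bar v$, reduces the problem to showing that two scalar coefficients are nonnegative. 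The coefficient of $H^{1/(m-1)}$ is controlled by choosing $C_1$ large enough that $mC_1^{m-1}(n-2)>C$, using $n\ge 3$; the one multiplying $H^{(2-m)/(m-1)}$, which is singular near the free boundary, is controlled on the support of $\bar v$ by exploiting the inclusion $\log|x|\le e^{\eta_1}\log(t+t_0)$, so that $\eta_1$ and $t_0$ can be tuned to absorb the dangerous term. The initial and lateral comparisons (at $t=0$ and on $\partial B_2\times\mathbb{R}^+$) are then handled by a suitable choice of constants, with a brief initial waiting time used in the subsolution construction to guarantee strict positivity of $u$ on a sufficiently large ball, exactly as in the proof of Theorem \ref{pro: ltb-cri-lower}.
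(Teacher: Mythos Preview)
Your approach coincides with the paper's: the authors use exactly this two-step strategy, first reading off the shape of the barriers from the critical manifold estimates (Theorems \ref{pro: ltb-cri-upper}--\ref{pro: ltb-cri-lower}) via the change of variables of Section \ref{sec: chvar}, and then observing that the resulting profiles are barriers for \eqref{eq.wpme-2} under the mere two-sided bound \eqref{eq: cond-2-infty-critical} by direct computation (which they in fact omit). Your identification of the obstacle---that the transformed barriers are a priori only sub/supersolutions for the specific model weights $\rho_{\psi_\pm}$, and that the sign change of $\bar v_t$ blocks a naive monotonicity-in-weight argument---is precisely why the paper resorts to direct verification rather than pure transplantation.

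One small point: your sketch of the direct computation invokes $n\ge 3$ through the inequality $mC_1^{m-1}(n-2)>C$, which reflects the fact that $\Delta(\log\log s)=[(n-2)\log s-1]/(s^2\log^2 s)$. For $n=2$ the leading drift term disappears and the argument as written does not close; the paper flags this too, pointing to the modifications of Section \ref{sect: 2-dim} (mapping to an auxiliary dimension $n_1>2$). So for full coverage of the stated range $n\ge 2$ you would need either that workaround or a separate handling of the $n=2$ balance, but the core of your argument is correct and matches the paper.
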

The rigorous application of Theorem \ref{thm: chvar-gen}, up to modifications to be taken into account to deal with the 2-dimensional case (Section \ref{sect: 2-dim}) and Remark \ref{obs: sub-sup}, would allow us to state the above results only for positive \emph{radial} weights $ \rho \in C^\infty(\mathbb{R}^n \setminus \{0\} ) \cap C(\mathbb{R}^n) $ satisfying
\begin{equation}\label{eq: cond-1-local}
\rho(0)=1 \, , \quad \lim_{x \to 0} x \cdot \nabla{\rho}(x) = 0
\end{equation}
and
\begin{equation}\label{eq: cond-2-infty}
\lim_{|x| \to +\infty} \frac{x \cdot \nabla{\rho}(x)}{\rho(x)} = - 2 \, , \quad \rho(x) \sim \frac{c}{|x|^2 \left( \log |x| \right)^{\! \nu} } \quad \textrm{as } |x| \to +\infty
\end{equation}
for some $ \nu \in (-\infty,1] $ and $c>0$. However, direct computations (which we omit) show that the left-hand and right-hand sides of \eqref{bound-peso-1} and \eqref{bound-peso-1-critical} yield good barriers (outside a ball) for \eqref{eq.wpme-2} under weaker assumptions on $ \rho $ as in the statement of Theorem \ref{thm: peso} or \ref{thm: peso-critico}.

\section{Comments and open problems}\label{sect: comm-opp}

 \noindent$\bullet$ {\bf Extensions.} \noindent One may wonder what happens outside the range of $\mu$ we have treated. The supercritical case $ \mu>1 $ contains manifolds with very negative curvature at infinity. It is then known that {\sl stochastic completeness} is lost or, equivalently, that mass conservation for integrable solutions of the corresponding heat equation does not hold anymore (see for instance \cite{Grig}). Having in mind similar arguments as in \cite{BK} and \cite{KRV},  we expect that separable solutions come up, which should play the main role for asymptotics at least in inner regions.  We plan to treat that issue elsewhere  in the future.

%

\medskip

 \noindent$\bullet$ {\bf Comment on the linear case.}
We do not know of explicit, detailed results for the linear equation $u_t=\Delta_g(u)$ under general negative-curvature conditions like the ones used in this paper, except for the  well-studied case of the hyperbolic space. Such lack of information was  surprising for us. Some partial results are however available, and most likely other ones can be derived using the general theory developed e.g. in \cite{Grig2, Grig3, SC, CG}. We confine ourselves here to making some relevant comments. First, Grigor'yan condiders the volume $V(r)$ of Riemannian balls of a model, centered at a pole $o$, and states in \cite[Example 5.36]{Grig3} that  when $V(r)$ behaves like $e^{r^\alpha}$ for large $r$ with $\alpha\in(0,1)$ (i.e., with our notation, $\psi(r)\sim r^{(\alpha-1)/(n-1)}e^{r^{\alpha}/(n-1)}$), the heat kernel satisfies
\[
    p_t(o,o)\approx \,e^{-c_1t^{\frac{\alpha}{2-\alpha}}}
\]
for a suitable $c_1>0$ and all $t>1$. A similar \it lower \rm bound holds also when $r^2/\log(V(r))$ is increasing for large $r$,
a condition which holds when $V(r)\sim e^{cr^\alpha}$ in the wider range $\alpha\in(0,2)$, on \it general \rm manifolds, which can be seen e.g. applying Theorem 6.1 of \cite{CG}. In the case $\alpha\in(1,2)$ we could not find a corresponding upper bound in the literature, although it might be possible to derive it at least on models by Corollary 5.34 in \cite{Grig3}.

\medskip

 \noindent$\bullet$ {\bf Volume estimates.} Let us make some volume-propagation considerations, that is, let us compute the volume occupied by a compactly supported solution at large times (which we label $ \mathsf{V}(t) $), relying on our estimates on $ \mathsf{R}(t) $. In Euclidean context, in view of \eqref{eq: euclid-intro} we deduce that
\begin{equation}\label{eq: euclid-intro-volume}
\mathsf{V}(t) \approx t^{\frac{n}{n(m-1)+2}} \, .
\end{equation}
In hyperbolic space, since $ \mathsf{V}(t) $ behaves like $ e^{(n-1) \, \mathsf{R}(t)} $, estimate \eqref{eq: hyperb-intro} yields
\begin{equation}\label{eq: hyperb-intro-volume}
\mathsf{V}(t) \approx t^{\frac{1}{m-1}} \, ,
\end{equation}
which reads \eqref{eq: euclid-intro-volume} as $ n \to +\infty $. In particular, we remark that the speed of propagation is \emph{slower} on $ \mathbb{H}^n $ with respect to $ \mathbb{R}^N $ if one looks at \emph{distances} covered, while it is slightly \emph{faster} if one looks at \emph{volumes} covered.

\medskip

\noindent $\bullet$ The restriction of compact support on the data is important for the upper barriers we construct. On the other hand, the assumption of bounded initial data is only made for convenience, since it is not difficult to prove that, like in the Euclidean case, integrable and compactly supported initial data implies bounded and compactly supported solutions for small times.

\medskip

\medskip

\noindent $\bullet$  Smoothing effects, meaning that $u_0$ is assumed to be only in $L^1$, are not discussed here. The problem of a general smoothing effect in our context (we only have barriers for bounded and compactly supported initial data) is open and appears to be non trivial.

\medskip

%

\noindent $\bullet$
Finally, some of our results still hold if we allow the manifold to have positive curvature in a compact set. In fact, for the lower bounds it is actually enough to choose $R$ so large that $ \inf_{x \in B_{R}(o) } \mathrm{Ric}_o(x) = 0 $ in \eqref{eq: choice-B-curv}. However, in the case of upper bounds apparently there are problems in assuming this relaxed sign condition, in fact one needs some ``smallness'' condition either over curvature or diameter of the ball where curvature is positive so that the function $ \psi $ defining the model manifold used for comparison remains positive.
\vskip 1cm

\noindent {\textsc{Acknowledgements}. G.G.~has been partially supported by the PRIN project {\em Equazioni alle derivate parziali di tipo ellittico e parabolico: aspetti geometrici, disuguaglianze collegate, e applicazioni} (Italy). M.M.~has been partially supported by the PRIN project \emph{Calculus of Variations} (Italy). Both G.G.~and M.M.~have also been supported by the Gruppo Nazionale per l'Analisi Matematica, la Probabilit\`a e le loro Applicazioni (GNAMPA) of the Istituto Nazionale di Alta Matematica (INdAM, Italy). J.L.V.~has been partially funded by Projects MTM2011-24696 and MTM2014-52240-P (Spain). M.M.~thanks Universidad Aut\'onoma de Madrid and J.L.V.~thanks Politecnico di Milano for hospitality during visits to work on this project.

\vskip 1cm


\medskip\noindent \it 2010 Mathematics Subject Classification\rm: 35B40, 35K55, 35K65, 58J13.

\noindent\it Keywords and phrases\rm: Porous medium equation, flows on manifolds, asymptotic  behaviour, finite propagation.

%
%
%
%

\end{document}